\newtheorem{lem}{Lemma}
\newtheorem{thm}{Theorem}
\newtheorem{cor}{Corollary}
\newtheorem{proposition}{Proposition}
\newtheorem{remark}{Remark}
\newcommand{\restr}[2]{\left. #1 \right|_{#2}}
\newcommand{\sgn}{\operatorname{sgn}\nolimits}
\newcommand{\hypoth}[2]{\medskip\noindent{#1}{\em #2}\medskip}
\newcommand{\ddef}[1]{{#1}}
\newcommand{\am}{\operatorname{am}\nolimits}
\newcommand{\pder}[2]{\frac{\partial \, #1}{\partial \, #2} }
\newcommand{\sn}{\operatorname{sn}\nolimits}
\newcommand{\cn}{\operatorname{cn}\nolimits}
\newcommand{\dn}{\operatorname{dn}\nolimits}
\newcommand{\E}{\operatorname{E}\nolimits}
\newcommand{\eq}[1]{$(\protect\ref{#1})$}
\newcommand{\be}[1]{\begin{equation}\label{#1}}
\newcommand{\ee}{\end{equation}}
\newcommand{\vect}[1]{\left( \begin{array}{c} #1 \end{array} \right)}
\newcommand{\MAX}{\operatorname{MAX}\nolimits}
\newcommand{\Exp}{\operatorname{Exp}\nolimits}
\newcommand{\Ker}{\operatorname{Ker}\nolimits}
\newcommand{\map}[3]{#1 \, : \, #2 \to #3}  
\newcommand{\const}{\operatorname{const}\nolimits}
\newcommand{\pdder}[2]{\frac{\partial^2 #1}{\partial \, {#2}^2} }
\def\Ho{$(\mathbf{H1})$} 
\def\Ht{$(\mathbf{H2})$} 
\def\Hth{$(\mathbf{H3})$} 
\def\Hf{$(\mathbf{H4})$} 
\def\tconj{t^1_{\operatorname{conj}}}
\def\tmax{t_{\MAX}^1}
\def\tmaxd{t_{\MAX}^2}
\def\a{\alpha}
\def\tcut{t_{\operatorname{cut}}}
\def\tmax{t_{\MAX}^1}
\def\then{\quad\Rightarrow\quad}
\def\lam{\lambda}
\def\R{{\mathbb R}}
\def\ds{\displaystyle}
\def\eps{\varepsilon}
\def\tnu{\widetilde{\nu}}
\def\tlam{\widetilde{\lam}}
\def\bu{\bar{u}}
\def\f{\varphi}
\def\vH{\vec H}
\def\N{{\mathbb N}}
\def\Z{{\mathbb Z}}
\def\F{\Phi}
\begin{document}

\title
{Conjugate points in nilpotent sub-Riemannian problem \\on the Engel group}

\author{A. A. Ardentov, Yu. L. Sachkov}
\address{Program Systems Institute of RAS,
Pereslavl-Zalessky,	 Russia}
\email{aaa@pereslavl.ru, sachkov@sys.botik.ru}

\begin{abstract}
The left--invariant sub-Riemannian problem on the Engel group is considered. This problem is very important as nilpotent approximation of nonholonomic systems in four--dimensional space with two--dimensional control, for instance of a system which describes movement of mobile trailer robot. We study the local optimality of extremal trajectories and estimate conjugate time in this article.
\end{abstract}

\maketitle

\section{Introduction}
This work deals with the nilpotent sub--Riemannian problem on the Engel group with growth vector $(2, 3, 4)$.
Four-dimensional optimal control problem with two-dimensional control is stated as follows:
\begin{align} 
&\dot{q} = \vect{\dot{x} \\ \dot{y} \\ \dot{z} \\ \dot{v}} = u_1 
\vect{1 \\ 0 \\ - \frac{y}{2} \\ 0} + u_2 \vect{0 \\ 1 \\ \frac{x}{2} 
\\ \frac {x^2+y^2}{2}}, \quad q \in \R^4, \quad u \in \R^2, \label{pr1}\\
& q(0) = q_0 = (x_0,y_0,z_0,v_0), \quad q(t_1) = q_1 = (x_1, y_1, z_1, 
v_1), \label{pr2} \\
& l = \int_0^{t_1} \sqrt{u_1^2 + u_2^2} \, d t \rightarrow \min.
\label{pr3}
\end{align}
Since the problem is invariant under left shifts on the Engel group, we can assume that the initial point is identity of the group $q_0 = (x_0, y_0, z_0, v_0) = (0, 0, 0, 0)$.

The paper continues the study of this problem started in the work~\cite{engel}. The main result of~\cite{engel} is upper bound of the cut time (i.~e., the time of loss of { \em global} optimality) along extremal trajectories of the problem. The aim of this paper is to investigate the first conjugate time (i.~e., the time of loss of {\em local} optimality) along the trajectories. We show that the function that gives the upper bound of the cut time provides the lower bound of the first conjugate time. In order to state this main result exactly, we recall necessary facts from the previous work~\cite{engel}.

Existence of optimal solutions of problem~\eq{pr1}--\eq{pr3} is implied by Filippov's theorem~\cite{notes}. By Cauchy–-Schwarz inequality, it follows that sub-Riemannian length minimization problem~\eq{pr3} is equivalent to action minimization problem:
\be{J}
\int_0^{t_1} \frac{u_1^2+u_2^2}{2} \, d t \rightarrow \min.
\ee
Pontryagin's maximum principle~\cite{PGBM, notes} was applied to the resulting optimal control problem~\eq{pr1}, \eq{pr2}, \eq{J}. Abnormal extremals were parameterized.

Denote vector fields at the controls in the right-hand side of system~\eq{pr1}:
\begin{align*}
&X_1 = (1, 0, - \frac{y}{2}, 0)^T,& X_2 =(0, 1, \frac{x}{2}, \frac 
{x^2+y^2}{2})^T,
\end{align*}
and the corresponding linear on fibers of the cotangent bundle $T^* M$ Hamiltonians $h_i(\lambda)= \langle\lambda, X_i(q)\rangle$, $\lambda \in T^* M $, $i=1,2$. Normal extremals satisfy the Hamiltonian system
\be{norm_ham}
\dot{\lambda}= \vec{H}(\lambda), \qquad \lambda \in T^* M,
\ee 
where $H = \frac{1}{2}\left(h_1^2+h_2^2\right)$. 

The normal Hamiltonian system~\eq{norm_ham} is given, in certain natural coordinates, as follows on a level surface $\left\{\lam \in T^* M \mid H=\frac{1}{2}\right\}$:
\begin{eqnarray}
&&\dot{\theta}= c, \qquad \dot{c}= - \alpha\, \sin \theta, \qquad \dot{\alpha}=0, \label{pend}\\ 
&&\dot{q}= \cos \theta \, X_1(q)+\sin\theta \, X_2 (q), \qquad q(0)=q_0. \nonumber
\end{eqnarray}
The family of all normal extremals is parameterized by points of the phase cylinder of pendulum
\begin{eqnarray*}
C = \left\{\lambda \in T_{q_0} ^* M \mid H(\lambda)= \frac{1}{2}\right\}= \left\{(\theta, c, \alpha  ) \mid \theta \in S^1, \ c, \alpha \in \R  \right\},
\end{eqnarray*}
and is given by the exponential mapping
\begin{eqnarray*}
&&\map{\Exp}{N=C \times \R_+}{M},\\
&&\Exp (\lambda, t) = q_t = (x_t, y_t, z_t, v_t).
\end{eqnarray*}
Energy integral of pendulum~\eq{pend} is expressed by $E=\frac{c^2}{2}-\alpha \cos \theta$. The cylinder $C$ has the following stratification corresponding to the particular type of trajectories of the pendulum:
\begin{align*}
&C=\cup_{i=1}^7 C_i,   \quad  C_i \cap C_j = \emptyset, \ i \neq j, 
\quad \lambda = (\theta,c,\alpha),\\
&C_1 = \{\lambda \in C \mid \alpha \neq 0, E\in(- |\alpha|, 
|\alpha|)\}, \\
&C_2 = \{\lambda \in C \mid \alpha \neq 0, E\in(|\alpha|,+\infty)\}, 
\\
&C_3 = \{\lambda \in C \mid \alpha \neq 0, E=|\alpha|, c \neq 0 \}, \\
&C_4 = \{\lambda \in C \mid \alpha \neq 0, E=-|\alpha|\}, \\
&C_5 = \{\lambda \in C \mid \alpha \neq 0, E=|\alpha|, c = 0\}, \\
&C_{6} = \{\lambda \in C \mid \alpha = 0, \ c \neq 0\}, \\
&C_7 = \{\lambda \in C \mid \alpha = c = 0\}. 
\end{align*}

Extremal trajectories were parameterized by elliptic Jacobi's functions for any $\lambda \in C$ in the paper~\cite{engel}. This parameterization was obtained in natural coordinates $(\varphi, k, \alpha)$, which rectify the equations of pendulum: $\dot{\varphi}=1$, $\dot{k}=0$, $\dot{\alpha}=0$.

Further, in the work~\cite{engel} discrete symmetries of the exponential mapping were described. The corresponding Maxwell sets were constructed. On this basis was obtained the main result of the paper~\cite{engel}, Theorem~\ref{th:tcut_bound}, which gives upper bound of the cut time along extremal curves
$$\tcut(\lambda) = \sup \{ t>0 \mid \mathrm {Exp} (\lambda, s) \text{ is optimal for } s \in [0,t]\}.$$
Define the following function $\map{\tmax}{C}{(0, +\infty]}$:
\begin{align*}
&\lambda \in C_1 \then \tmax = \min (2 p_z^1, 4 K)/\sigma, \\
&\lambda \in C_2 \then \tmax = 2 K k/\sigma, \\
&\lambda \in C_6 \then \tmax = \frac {2 \pi} {|c|}, \\
&\lambda \in C_3 \cup C_4 \cup C_5 \cup C_7 \then \tmax=+\infty.
\end{align*}
where $\sigma = \sqrt{|\alpha|}$; $\ds K(k)= \int_0^\frac{\pi}{2} \frac{dt}{\sqrt{1- k^2 \sin^2 t}}$;
$p^1_z(k)\in (K(k), 3K(k))$ is the first positive root of the function $f_z(p,k)=\dn p \,\sn p+ (p-2\E(p))\cn p$; $\dn p$, $\sn p$, $\cn p$ are Jacobi's functions~\cite{whit_vatson}; $\E(p)=\int_0^p \dn^2 t \,dt$.

\begin{thm}[\cite{engel}, Theorem 3]\label{th:tcut_bound}
For any $\lambda \in C$ 
\begin{align}
\tcut(\lambda) \leq \tmax(\lambda). \label{tcutbound}
\end{align}
\end{thm}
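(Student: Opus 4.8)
The plan is to prove this by the Maxwell-stratum method that is standard for upper bounds of the cut time in left-invariant sub-Riemannian problems. It rests on three ingredients, all available from \cite{engel}: the group of discrete symmetries of the exponential map $\Exp$; the parameterization of the endpoint $q_t=(x_t,y_t,z_t,v_t)$ by Jacobi elliptic functions in the rectifying coordinates $(\varphi,k,\alpha)$; and the elementary principle that an extremal trajectory cannot remain optimal past its first Maxwell point.

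First I would recall the reflections $\varepsilon^i$ of the pendulum --- reflection of a pendulum trajectory in the midpoint of its time interval and in the coordinate axes $\theta=0$, $c=0$ of the phase cylinder --- together with their lifts to symmetries of $\Exp$, i.e.\ maps $\varepsilon^i\colon N\to N$ and $\varepsilon^i\colon M\to M$ with $\Exp\circ\varepsilon^i=\varepsilon^i\circ\Exp$. Each $\varepsilon^i$ gives a Maxwell set
\[
\MAX^i=\{(\lambda,t)\in N\mid \varepsilon^i(\lambda)\ne\lambda,\ \Exp(\varepsilon^i(\lambda),t)=\Exp(\lambda,t)\}.
\]
For $(\lambda,t)\in\MAX^i$ at which $\Exp(\lambda,\cdot)$ and $\Exp(\varepsilon^i(\lambda),\cdot)$ are distinct on $[0,t]$, the endpoint $q_t=\Exp(\lambda,t)$ is a Maxwell point: it is reached by two distinct extremal trajectories of equal length. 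Set $\tau_1(\lambda)=\inf\{t>0\mid(\lambda,t)\in\bigcup_i\MAX^i\}$, with the convention $\inf\emptyset=+\infty$.

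The optimality step runs as follows. Suppose $q_\tau=\Exp(\lambda,\tau)$ is such a Maxwell point and, for contradiction, $\Exp(\lambda,\cdot)$ is optimal on $[0,s]$ for some $s>\tau$. Then $\restr{\Exp(\lambda,\cdot)}{[0,\tau]}$ is optimal, hence so is $\restr{\Exp(\varepsilon^i(\lambda),\cdot)}{[0,\tau]}$, which joins the same endpoints with the same length; therefore the concatenation of the latter with $\restr{\Exp(\lambda,\cdot)}{[\tau,s]}$ is again optimal. Being optimal it is an extremal trajectory --- and not an abnormal one, since its initial arc is a strictly normal extremal (abnormal extremals of the problem being known explicitly) --- hence a smooth solution of \eq{norm_ham}; by uniqueness for the Cauchy problem of the normal Hamiltonian system it must coincide with $\Exp(\lambda,\cdot)$ on all of $[0,s]$. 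This contradicts $\varepsilon^i(\lambda)\ne\lambda$ together with the distinctness of the two trajectories on $[0,\tau]$. Hence $\tcut(\lambda)\le\tau_1(\lambda)$ for every $\lambda\in C$, and the theorem is reduced to the identity $\tau_1=\tmax$.

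Proving that identity, stratum by stratum, is the main obstacle. Inserting the elliptic formulas of \cite{engel} for $q_t$ into the system $\Exp(\varepsilon^i(\lambda),t)=\Exp(\lambda,t)$ and passing to a rescaled time $p$ (proportional to $\sigma t$ on $C_1\cup C_2$), each system collapses, via the addition and half-period identities for $\sn$, $\cn$, $\dn$, to a single scalar equation. On the oscillating stratum $C_1$ the relevant equations yield two candidate Maxwell times, $2p^1_z/\sigma$ and $4K/\sigma$, where $p^1_z(k)$ is the first positive zero of $f_z(p,k)=\dn p\,\sn p+(p-2\E(p))\cn p$; here one must (a) prove that this zero exists and that $p^1_z(k)\in(K(k),3K(k))$, which requires a sign and monotonicity analysis of $f_z$ and of its derivative on $(K,3K)$; (b) verify that the remaining reflections give no smaller time; and hence (c) conclude $\tau_1=\min(2p^1_z,4K)/\sigma$ on $C_1$. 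The rotating stratum $C_2$ is handled identically and gives $2Kk/\sigma$; on $C_6$, where $\alpha=0$, the same scheme gives the period $2\pi/|c|$; and on $C_3\cup C_4\cup C_5\cup C_7$ the reflections produce no Maxwell points, so $\tau_1=+\infty$ and \eq{tcutbound} holds trivially. Collecting the four cases completes the proof.
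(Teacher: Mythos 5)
A preliminary remark: the present paper does not prove this theorem at all --- it is imported from the earlier work \cite{engel} (Theorem 3 there), where the Maxwell sets generated by the discrete symmetries of $\Exp$ are constructed and analyzed. So there is no in-paper proof to compare against; your proposal has to be judged against what that construction actually requires.

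Your strategy is the right one and, as far as one can tell from the summary in the introduction, is exactly the route of \cite{engel}: lift the reflections of the pendulum to symmetries $\varepsilon^i$ of $\Exp$, form the Maxwell sets, invoke the fact that a normal extremal cannot remain optimal past a Maxwell point (your concatenation-plus-uniqueness argument for that step is correct, including the care taken to exclude the abnormal alternative), and reduce the theorem to the identity $\tau_1=\tmax$. The genuine gap is that this identity --- which you yourself label ``the main obstacle'' --- is the entire mathematical content of the theorem, and your proposal only lists what would have to be done rather than doing it: (a) existence and localization $p_z^1(k)\in(K,3K)$ of the first positive root of $f_z$, (b) the verification that on $C_1$ and $C_2$ none of the remaining reflections produces a smaller Maxwell time, and (c) the treatment of the degenerate strata. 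Each of these is a substantial piece of elliptic-function analysis (it occupies the bulk of \cite{max3} and \cite{engel}), and without it the inequality \eq{tcutbound} with the \emph{specific} function $\tmax$ is not established. A second, smaller defect: you define $\tau_1$ as the infimum over $\bigcup_i\MAX^i$, where $\MAX^i$ only requires $\varepsilon^i(\lambda)\ne\lambda$ and $\Exp(\varepsilon^i(\lambda),t)=\Exp(\lambda,t)$; but two distinct covectors can generate the same extremal trajectory, and your optimality step applies only when the two trajectories are geometrically distinct on $[0,t]$. So the infimum that actually bounds $\tcut$ is taken over the genuine Maxwell points, and one must still check that this (a priori larger) infimum does not exceed $\tmax$ on the fibers where the symmetric trajectory degenerates onto the original --- precisely the case that forces the statement to be an inequality rather than an equality. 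As written, your $\tau_1$ and the quantity you bound $\tcut$ by are not the same number.
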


We study the local optimality of extremal trajectories and estimate conjugate time in this article. A point $q_t = \Exp(\lambda, t) $ is called {\em a conjugate point} for $q_0$ if $\nu = (\lambda, t)$ is a critical point of the exponential mapping and that is why $q_t$ is the corresponding critical value:
$$
\map{d_{\nu} \Exp}{T_{\nu}N}{T_{q_t}M} \text{ is degenerate},
$$
i.~e.,
$$ \frac{\partial (x, y, z, v)}{\partial (\theta, c, \alpha, t)} \left(\nu\right)=0.$$
Note that $t$ is called {\em a conjugate time} along extremal trajectory $q_s = \Exp(\lambda, s)$, $s\geq 0$.

Here and below we denote by $\ds\frac{\partial (x, y, z, v)}{\partial (\theta, c, \alpha, t)}$ the Jacobian of the exponential map
$$
\begin{array}{|c c c|}
\pder{x}{\theta} & \ldots & \pder{x}{t} \\
\vdots & \ddots & \vdots \\
\pder{v}{\theta} & \ldots & \pder{v}{t}
\end{array} \, .
$$

Due to the strong Legendre condition, for any normal extremal there exists a countable family of conjugate points. Besides, conjugate times are separated from each other (see Section~\ref{sec:conj_hom}). The first conjugate time along the trajectory $\Exp (\lambda, s)$ is denoted by
$$\tconj=\min \left\{t>0 \mid t \text{ is a conjugate time along } \Exp (\lambda, s), \ s \geq 0 \right\}. $$

The trajectory $\Exp (\lambda, s)$ loses local optimality at the moment $t=\tconj(\lambda)$ (see~\cite{notes}). Our main aim is to prove the following lower bound of the first conjugate time.
\begin{thm}\label{th:tconjmax}
For any $\lambda \in C$
\be{tconjmax}
\tconj(\lambda) \geq \tmax(\lambda).
\ee 
\end{thm}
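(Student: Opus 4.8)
The plan is to reduce the statement $\tconj(\lambda)\geq\tmax(\lambda)$ to a careful analysis of the sign of the Jacobian $J(\nu)=\frac{\partial(x,y,z,v)}{\partial(\theta,c,\alpha,t)}$ along each extremal, using the symmetry structure inherited from~\cite{engel}. By the strong Legendre condition noted in the excerpt, conjugate times are isolated and the first one is the first positive zero of $t\mapsto J(\lambda,t)$; since $J(\lambda,t)>0$ for small $t$, it suffices to prove $J(\lambda,t)\neq 0$ for all $t\in(0,\tmax(\lambda))$. Thus I would organize the proof by the strata $C_i$: on $C_3\cup C_4\cup C_5\cup C_7$ there is nothing to prove, since $\tmax=+\infty$ there corresponds to the degenerate (one-parameter or lower-dimensional) families where the bound is vacuous or follows from an explicit elementary parameterization; the real work is on $C_1$, $C_2$, $C_6$.

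\medskip

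First I would reduce the dimension of the computation using the first integrals and discrete symmetries. On the generic strata the exponential map, written in the rectifying coordinates $(\varphi,k,\alpha)$ and using the explicit Jacobi-function parameterization of $(x_t,y_t,z_t,v_t)$ from~\cite{engel}, has a Jacobian that factors: one factor records the homogeneity/dilation action (the $\alpha$-derivative versus the $t$-derivative, which are essentially collinear up to a controlled correction because $\sigma=\sqrt{|\alpha|}$ rescales time), and the remaining factor is a $3\times 3$ determinant in the ``reduced'' variables on the fixed-energy level. The key algebraic step is to show that, after this reduction and after passing to the coordinates adapted to the symmetry group, $J(\lambda,t)$ can be written as a product of (i) an explicit nonvanishing elementary factor and (ii) a function built from the ``coordinate'' functions $f_z(p,k)$ and its analogues for $x,y,v$ evaluated at the half-period-shifted argument $p=p(t)$. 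This mirrors the standard Maxwell-stratum technique: the Maxwell time $\tmax$ is the first zero of exactly such a function, and one expects the conjugate time to be no smaller because the relevant function does not change sign before its first Maxwell-type zero.

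\medskip

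The heart of the argument — and the main obstacle — is therefore a monotonicity/sign lemma for the reduced Jacobian on $(0,\tmax(\lambda))$, i.e.\ showing that the function whose first zero would be the first conjugate time cannot vanish earlier than the function $f_z$ (and its companions) whose first zero defines $\tmax$. Concretely, for $\lambda\in C_1$ one must show $J\neq 0$ on $(0,\min(2p_z^1,4K)/\sigma)$, and here I would (a) express $J$ through $f_z(p,k)$, $\frac{\partial f_z}{\partial p}$, $\E(p)$, $\sn,\cn,\dn$; (b) use the known localization $p_z^1(k)\in(K,3K)$ and the sign of $f_z$ on $(0,p_z^1)$ established in~\cite{engel}; and (c) prove that the extra terms in $J$ coming from differentiation in $k$ (equivalently in $E$) carry the same sign as $f_z$ on that interval, so no cancellation producing a zero can occur. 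The analogous but easier computation handles $C_2$ (where $\tmax=2Kk/\sigma$ is a pure period, and $J$ should reduce to something like $\dn p$ or $\cn p$ type factors that are manifestly sign-definite on a quarter/half period) and $C_6$ (the pendulum-free case $\alpha=0$, where everything is elementary trigonometric and $\tmax=2\pi/|c|$). I expect that the bulk of the technical effort, and the place where sharp estimates on complete and incomplete elliptic integrals ($K$, $\E$, and their monotonicity in the modulus) are unavoidable, is step (c): controlling the sign of the $k$-derivative contribution to $J$ uniformly on $(0,\tmax)$. If that sign control can be established, the theorem follows immediately by the isolation of conjugate times and the positivity of $J$ near $t=0$.
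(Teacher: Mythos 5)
Your reduction is aimed at the right object (the Jacobian of $\Exp$ in the rectifying coordinates, factored into a nonvanishing elementary part and an elliptic-function part), and your target is correct (no zeros on $(0,\tmax)$, plus isolation of conjugate times). But the core of your plan --- establishing by direct sign/monotonicity analysis that the reduced Jacobian does not vanish anywhere on $(0,\tmax(\lambda))$ for \emph{all} $(\varphi,k)$ --- is precisely the step you leave as ``if that sign control can be established,'' and it is not how the argument can realistically be closed. The coefficients $d_0,d_2,d_4$ of the reduced Jacobian $J_1=d_0+d_2\sin^2u_2+d_4\sin^4u_2$ are long polynomial expressions in $\sn,\cn,\dn,F,E$ whose signs are not uniformly controllable over the whole rectangle $(u_1,x,k)$. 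The missing idea is the homotopy invariance of the number of conjugate points (Corollary~2.2 of~\cite{el_conj}, restated here as Theorem~\ref{th:conj_hom}): one only needs (i) absence of conjugate points on $(0,\tmax)$ for \emph{one} boundary value of the parameter (the asymptotic regime $k\to0$, where $J_1$ reduces to an explicit trigonometric polynomial $J_1^0$ whose sign is settled by elementary comparison-function lemmas), and (ii) non-vanishing of $J_1$ at the \emph{moving endpoint} $t_1^s=\tmax(\lambda^s)$ along a continuous family of extremals, i.e.\ the sign of $J_1$ at $u_1=\pi$ and at $u_1=u_z^1(k)$ only. Deforming in $k$ then transports the ``no conjugate points'' property to all $k$. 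Without this mechanism your step (c) is an open-ended global estimate, not a proof. Note also that at $t=\tmax$ the Jacobian genuinely vanishes for exceptional $\varphi$ (e.g.\ $\sn(\varphi,k)=0$), so the endpoint condition fails on a thin set and a separate perturbation-in-$\varphi$ argument is needed there.

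A second concrete error: on $C_3\cup C_4\cup C_5\cup C_7$ the bound is \emph{not} vacuous. There $\tmax=+\infty$, so the theorem asserts that these extremals have \emph{no} conjugate points on all of $(0,+\infty)$, which requires proof. For $C_4\cup C_5\cup C_7$ this follows by projecting the problem onto the Euclidean plane (the projected trajectories are straight lines, globally optimal forever), and for $C_6$ by projecting onto the Heisenberg group; but $C_3$ (the separatrix case $k=1$) needs its own argument --- in the paper it is obtained by a homotopy $k\to1$ from $C_2$, again via Theorem~\ref{th:conj_hom}. Simply declaring these strata degenerate or elementary skips a real part of the theorem.
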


In Sections~\ref{sec:C1}--\ref{sec:C47} we prove the inequality~\eq{tconjmax}, $\lambda \in C_i$ for all $i=1,\dots,7$ (see Theorems \ref{th:tconjmaxC1}, \ref{th:tconjmaxC2}, \ref{th:tconjmaxC3}, \ref{th:tconjmaxC47}).

\section{Conjugate time and symmetries of the exponential mapping}
Normal Hamiltonian system for the considered problem has the following symmetries (see~\cite{engel}):
reflection
\be{refl}
(\theta, c, \alpha, x, y, z, v, t) \mapsto (\theta - \pi, c, -\alpha, -x, -y, z, -v, t )
\ee
and dilations
\be{dil}
(\theta, c, \alpha, x, y, z, v, t) \mapsto  (\theta , \frac{c}{\sqrt{\gamma}}, \frac{\alpha}{\gamma}, \sqrt{\gamma}x, \sqrt{\gamma}y, \gamma z, \gamma^{\frac{3}{2}}v, \sqrt{\gamma} t), \quad \gamma>0.
\ee 
We consider the corresponding symmetries of the exponential mapping and their action on conjugate points.

\subsection{Reflection}
Define the action of reflection in preimage and image of the exponential mapping according to~\eq{refl}:
\begin{eqnarray*}
&&i: N\rightarrow N,\qquad  i(\nu)=i(\theta, c, \alpha, t)=\widetilde{\nu}= (\theta - \pi, c, -\alpha, t),\\
&&i: M\rightarrow M,\qquad   i(q)= i(x, y, z, v)=\widetilde{q}= (-x, -y, z, -v).
\end{eqnarray*}
Existence of symmetry~\eq{refl} of Hamiltonian system implies that the reflection $i$ is the symmetry of the exponential mapping: $\Exp\circ i = i\circ \Exp$. (It is easily shown that $i$ and the reflection $\eps^4$ coincide~\cite{engel}). Hence we obtain $d \Exp \circ d i = d i \circ d \Exp$. The reflection $i$ is non-degenerate ($\Ker \quad d i={0}$) and therefore $\nu = (\lambda, t)= (\theta, c, \alpha, t)$ is a critical point of $\Exp$ if and only if $\tnu= i(\nu)= (\tlam, t)= (\theta - \pi, c, -\alpha, t)$ is a critical point of $\Exp$. And so $\tconj (\tlam)= \tconj(\lambda)$. Using the definition of Maxwell time $\tmax$ (see \cite{engel}, p. 7.6.), we get easily similar equality $\tmax (\tlam)= \tmax (\lam)$. Therefore it is enough to prove the necessary inequality~\eq{tconjmax} can be proved only for $\alpha \geq 0$.

\subsection{Dilations}\label{subsec:dilations}
According to formula \eq{dil} define the action of dilations in preimage and image of the exponential mapping:
\begin{align*}
&\F_\gamma: N \rightarrow N,\quad \F_\gamma(\nu) &&=\F_\gamma(\lambda, t)= \F_\gamma(\theta, c, \alpha, t)=(\tilde{\lambda}, \tilde{t}) = \left( \theta, \frac{c}{\sqrt{\gamma}}, \frac{\alpha}{\gamma}, \sqrt{\gamma}, \sqrt{\gamma}t\right),\\
&\F_\gamma: M \rightarrow M, \quad \F_\gamma(q)&&= \F_\gamma (x, y, z, v)= \tilde{q}= \left(\sqrt{\gamma}x,\sqrt{\gamma}y, \gamma z, \gamma^{\frac{3}{2}} v  \right),
\quad \gamma>0.
\end{align*}
These formulas define the action of multiplicative Lie group $\R_+$ in $N$ and $M$, s.~t.
$$\Exp\circ \F_\gamma = \F_\gamma \circ \Exp \qquad \forall\gamma > 0.$$
Thus, there is a one-dimensional symmetry group $G=\left\{\F_\gamma| \gamma>0\right\}$ of the exponential map.

It is easy to see that the symmetries preserve the sets of critical points and critical values of the exponential mapping.

\begin{lem} 
\label{lem:tconjfactor}
\begin{itemize}
	\item[$1)$] 
If $q\in M$ is the critical value of $\Exp$ corresponding to a critical point $\nu \in N$ then $\F_\gamma(q)$ is also the critical value of $\Exp$ corresponding to the critical point $\F_\gamma(\nu)$ for any $\gamma >0$. 
	\item[$2)$]
Let $\gamma >0$, $\lambda = (\theta, c , \alpha )$, $\tlam = \left(\theta, \frac{c}{\sqrt{\gamma}}, \frac{\alpha}{\gamma} \right)\in C$. Then $\tconj(\lambda)= \frac{1}{\sqrt{\gamma}} \tconj (\tlam)$.	
\end{itemize}
\end{lem}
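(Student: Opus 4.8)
The statement is Lemma~\ref{lem:tconjfactor}, asserting that dilations map critical points to critical points (part~1) and that consequently the first conjugate time scales inversely with $\sqrt{\gamma}$ (part~2). The plan is to derive everything from the single identity $\Exp\circ\F_\gamma=\F_\gamma\circ\Exp$ together with the fact that each $\F_\gamma$ is a diffeomorphism of $N$ (respectively $M$).

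\emph{Part 1.} First I would differentiate the intertwining relation $\Exp\circ\F_\gamma=\F_\gamma\circ\Exp$ at an arbitrary point $\nu\in N$, obtaining, by the chain rule,
$$
d_{\F_\gamma(\nu)}\Exp\circ d_\nu\F_\gamma \;=\; d_{\Exp(\nu)}\F_\gamma\circ d_\nu\Exp .
$$
Since $\F_\gamma:N\to N$ and $\F_\gamma:M\to M$ are linear (or at worst polynomial) bijections with everywhere nonzero Jacobian — this is immediate from the explicit formulas, the Jacobian of $\F_\gamma$ on $N$ being $\gamma^{-1/2}\cdot\gamma^{-1}\cdot\gamma^{1/2}=\gamma^{-1}\neq0$, and on $M$ being $\gamma^{1/2}\cdot\gamma^{1/2}\cdot\gamma\cdot\gamma^{3/2}=\gamma^{7/2}\neq0$ — both $d_\nu\F_\gamma$ and $d_{\Exp(\nu)}\F_\gamma$ are linear isomorphisms. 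Hence $d_{\F_\gamma(\nu)}\Exp$ is degenerate if and only if $d_\nu\Exp$ is degenerate. In other words $\nu$ is a critical point of $\Exp$ iff $\F_\gamma(\nu)$ is, and then $\F_\gamma(q)=\F_\gamma(\Exp(\nu))=\Exp(\F_\gamma(\nu))$ is the critical value corresponding to $\F_\gamma(\nu)$. This proves part~1.

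\emph{Part 2.} Write $\nu=(\lambda,t)$ with $\lambda=(\theta,c,\alpha)$ and recall $\F_\gamma(\lambda,t)=(\tlam,\sqrt{\gamma}\,t)$ where $\tlam=(\theta,c/\sqrt{\gamma},\alpha/\gamma)$. By part~1, $t$ is a conjugate time along $\Exp(\lambda,\cdot)$ iff $\sqrt{\gamma}\,t$ is a conjugate time along $\Exp(\tlam,\cdot)$; thus the set of conjugate times along $\tlam$ is exactly $\sqrt{\gamma}$ times the set of conjugate times along $\lambda$. Taking minima over these (nonempty, by the strong Legendre condition quoted in the excerpt) sets gives $\tconj(\tlam)=\sqrt{\gamma}\,\tconj(\lambda)$, i.e. $\tconj(\lambda)=\tfrac{1}{\sqrt{\gamma}}\tconj(\tlam)$.

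\emph{Main obstacle.} There is no serious analytic difficulty here; the whole content is bookkeeping with the chain rule and the explicit dilation formulas. The only point deserving care is the claim that $\F_\gamma$ is genuinely a diffeomorphism on each of $N$ and $M$ — one must check it is a bijection with nonvanishing Jacobian, which the displayed formulas make transparent — and the mild remark that the conjugate-time sets are nonempty so that the minima defining $\tconj$ actually exist; both are already supplied by the discussion preceding the lemma.
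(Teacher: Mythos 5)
Your proof is correct and follows essentially the same route as the paper, which simply notes that part~1) follows from $d\Exp\circ d\F_\gamma=d\F_\gamma\circ d\Exp$ and part~2) follows from part~1); you have merely filled in the chain-rule and invertibility details. No issues.
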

\begin{proof}
1) follows from the equality $d \Exp \circ d \F_\gamma = d \F_\gamma \circ d \Exp$.

2) follows from 1).
\end{proof}

Let $\alpha > 0$. Suppose $\gamma = \alpha$; then from Lemma~\ref{lem:tconjfactor}, we get the following:
$$
\tconj (\theta, c, \alpha) = \frac{1}{\sqrt{\alpha}} \tconj \left(\theta, \frac{c}{\sqrt{\alpha}}, 1 \right).$$
From the definition of Maxwell time $\tmax$, a similar equation follows:
$$
\tmax (\theta, c, \alpha) = \frac{1}{\sqrt{\alpha}}\tmax \left(\theta, \frac{c}{\sqrt{\alpha}}, 1 \right).$$
Therefore, it is sufficient to prove the required inequality~\eq{tconjmax} in two cases: for $\alpha =1$ and $\alpha = 0$.

\subsection{Transformation of Jacobian of the exponential mapping}
\label{subsec:J_transform}
For a fixed $\lambda = (\theta, c, \alpha)$, conjugate times are roots $t>0$ of the Jacobian $\ds\frac{\partial (x, y, z, v)}{\partial (\theta, c, \alpha, t)}$. First, we transform this Jacobian by using the symmetry group $G=\left\{\F_\gamma| \gamma>0\right\}$. The coordinate expression of the equation $\Exp\circ \F_\gamma(\lambda, t) = \F_\gamma\circ \Exp(\lambda, t)$ is
$$\Exp \left(\theta, \frac{c}{\sqrt{\gamma}}, \frac{\alpha}{\gamma}, \sqrt{\gamma}t \right) = \left(\sqrt{\gamma}x, \sqrt{\gamma}y, \gamma z, \gamma^{\frac{3}{2}}v \right).$$
Differentiating this equation w.r.t. $\gamma$ for $\gamma=\alpha=1$, we get 
$$-\frac{c}{2}\frac{\partial q}{\partial c} - \frac{\partial q}{\partial \alpha}+ \frac{t}{2}\frac{\partial q}{\partial t}= \left(\frac{x}{2}, \frac{y}{2}, z, \frac{3}{2} v \right)=: L. $$
Therefore, when $\alpha = 1$
\begin{eqnarray*}
\frac{\partial (x, y, z, v)}{\partial (\theta, c, \alpha, t)}  
&& = \det \left(\frac{\partial q}{\partial \theta}, \frac{\partial q}{\partial c}, \frac{\partial q}{\partial \alpha}, \frac{ \partial q }{\partial t}\right)=
\det \left(\frac{\partial q}{\partial \theta}, \frac{\partial q}{\partial c}, -\frac{c}{2}\frac{\partial q }{\partial c}+ \frac{t}{2}\frac{\partial q}{ \partial t}-L, \frac{\partial q}{\partial t}\right) = \\
&& = \det \left(\frac{\partial q}{\partial \theta}, \frac{\partial q}{\partial c}, \frac{ \partial q }{\partial t}, L \right)=\frac{1}{2} \begin{array}{|c c c c|}
\frac{\partial x}{\partial \theta} & \frac{\partial x}{\partial c} & \frac{\partial x}{\partial t} & x \\
\frac{\partial y}{\partial \theta} & \frac{\partial y}{\partial c} &  \frac{\partial y}{\partial t} & y \\
\frac{\partial z}{\partial \theta} & \frac{\partial z}{\partial c} & \frac{\partial z}{\partial t} & 2z \\
\frac{\partial v}{\partial \theta} & \frac{\partial v}{\partial c} & \frac{\partial v}{\partial t} & 3v
\end{array}\,.
\end{eqnarray*}

\section{Conjugate points and homotopy}\label{sec:conj_hom}

In this section we recall some necessary facts from the theory of conjugate points in optimal control problems. For details see, e.g., \cite{notes, cime, sar}.

Consider an optimal control problem of the form
\begin{align}
&\dot q = f(q,u), \qquad q \in M, \quad u \in U \subset \R^m, \label{sys} \\
&q(0) = q_0, \qquad q(t_1) = q_1, \qquad t_1 \text{ fixed}, \label{bound1} \\
&J = \int_0^{t_1} \f(q(t),u(t)) \, dt \to \min, \label{J1}
\end{align}
where $M$ is a  finite-dimensional analytic manifold,  $f(q,u)$ and $\f(q,u)$ are respectively analytic in $(q,u)$  families of vector fields   and   functions  on $M$ depending on the control parameter $u \in U$, and $U$ an open subset of $\R^m$. Admissible controls are $u(\cdot) \in L_{\infty}([0, t_1],U)$, and admissible trajectories $q(\cdot)$ are Lipschitzian.
Let 
$$
h_u(\lam) = \langle \lam, f(q,u)\rangle - \f(q,u), 
\qquad \lam \in T^*M, \quad q = \pi(\lam) \in M, \quad u \in U,
$$
be the \ddef{normal Hamiltonian of PMP} for problem~\eq{sys}--\eq{J1}. Fix a triple $(\widetilde{u}(t), \lam_t, q(t))$ consisting of a normal extremal control $\widetilde{u}(t)$, the corresponding extremal $\lam_t$, and the extremal trajectory $q(t)$ for the problem~\eq{sys}--\eq{J1}.

Let the following hypotheses hold:

\hypoth{\Ho}{For all $\lam \in T^*M$ and  $u \in U$, the quadratic form $\ds \pdder{h_u}{u}(\lam)$ is negative definite.}

\hypoth{\Ht}
{For any $\lam \in T^* M$, the function $u \mapsto h_u(\lam)$, $u \in U$, has a maximum point $\bu(\lam) \in U$:
$$
h_{\bu(\lam)}(\lam) = \max_{u \in U} h_u(\lam), \qquad \lam \in T^*M.
$$}%

\hypoth{\Hth}
{The extremal control $\widetilde{u}(\cdot)$ is a corank one critical point of the endpoint mapping.}%

\hypoth{\Hf}
{All trajectories of the Hamiltonian vector field $\vH(\lam)$, $\lam \in T^*M$, are continued for $t \in [0, +\infty)$. 
}

An instant $t_* > 0$ is called a \ddef{conjugate time} (for the initial instant $t = 0$) along the extremal $\lam_t$ if the restriction of the second variation of the endpoint mapping to the kernel of its first variation   is degenerate, see~\cite{notes} for details. In this case the point $q(t_*) = \pi(\lam_{t_*})$ is called \ddef{conjugate} for the initial point  $q_0$ along the extremal trajectory $q(\cdot)$.

Under hypotheses \Ho--\Hf, we have the following:

\begin{enumerate}
\item
Normal extremal trajectories lose their local optimality (both strong and weak) at the first conjugate point, see~\cite{notes}.
\item
An instant $t > 0$ is a conjugate time iff the exponential mapping $\Exp_t = \pi \circ e^{t \vH}$ is degenerate, see~\cite{cime}.
\item
Along each normal extremal trajectory, conjugate times are isolated one from another, see~\cite{sar}.
\end{enumerate}

We will apply the following statement for the proof of absence of conjugate points via homotopy.

\begin{thm}[Corollary 2.2~\cite{el_conj}]\label{th:conj_hom}
Let $(u^s(t), \lam^s_t)$, $t \in [0, + \infty)$, $s \in [0, 1]$, be a continuous in parameter~$s$ family of normal extremal pairs in the optimal control problem~\eq{sys}--\eq{J1} satisfying hypotheses \Ho--\Hf.

Let $s \mapsto t_1^s$ be a continuous function, $s \in [0, 1]$, $t_1^s \in (0, + \infty)$.
Assume that for any $s \in [0, 1]$ the instant $t = t_1^s$ is not a conjugate time along the extremal $\lam_t^s$.

If the extremal trajectory $q^0(t) = \pi(\lam_t^0)$, $t \in (0, t_1^0]$, does not contain conjugate points, then the extremal trajectory $q^1(t) = \pi(\lam_t^1)$, $t \in (0, t_1^1]$, also does not contain conjugate points.
\end{thm}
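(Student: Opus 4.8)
The final statement to prove is Theorem~\ref{th:conj_hom} itself: a homotopy-invariance result asserting that if one endpoint of a continuous family of extremal pairs is conjugate-point-free on a trajectory, and no member of the family has a conjugate time exactly at its terminal instant $t_1^s$, then the other endpoint is also conjugate-point-free. Let me sketch how I would prove this.

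\medskip

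The plan is to reason by continuity of the count of conjugate points along the family. By item~(2) of the characterization recalled above, an instant $t$ is conjugate along $\lam^s_\cdot$ exactly when the exponential mapping $\Exp^s_t = \pi \circ e^{t \vH}$ is degenerate, i.e.\ when $\det d_{\lam_0^s} \Exp^s_t = 0$ in suitable coordinates. First I would introduce the function
\be{detfun}
D(s, t) = \det d \Exp^s_t,
\ee
which, thanks to the analyticity of the data in \eq{sys}--\eq{J1} and the assumed continuity of $(u^s, \lam^s)$ in $s$, is continuous on $[0,1] \times [0, +\infty)$. Conjugate times along $\lam^s_\cdot$ are precisely the zeros $t>0$ of $t \mapsto D(s, t)$. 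The strong Legendre condition \Ho\ guarantees, via the classical theory (item~(3) above and~\cite{sar}), that for each fixed $s$ these zeros are isolated, so the number of conjugate times in any compact $t$-interval is finite.

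\medskip

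Next I would define, for each $s$, the integer-valued count
$$
\map{n(s)}{[0,1]}{\Z_{\geq 0}}, \qquad n(s) = \#\{ t \in (0, t_1^s] \mid D(s, t) = 0 \},
$$
the number of conjugate times up to and including the terminal instant. The heart of the argument is to show that $n(s)$ is \emph{locally constant}, hence constant on the connected interval $[0,1]$. Fix $s_0 \in [0,1]$. The key hypothesis is that $t = t_1^{s_0}$ is \emph{not} a conjugate time, so $D(s_0, t_1^{s_0}) \neq 0$; combined with isolation of zeros, the finitely many conjugate times in $(0, t_1^{s_0}]$ lie strictly inside the interval and are bounded away from its endpoint. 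By continuity of $D$ and of $s \mapsto t_1^s$, for $s$ near $s_0$ the terminal value $D(s, t_1^s)$ stays nonzero, and each simple-in-$s$ zero persists without new zeros crossing the moving endpoint $t_1^s$ (this is where the no-conjugate-time-at-$t_1^s$ assumption does its work: it prevents a conjugate time from entering or leaving the interval through its right end as $s$ varies). A standard degree/continuity argument then shows the conjugate times vary continuously and do not appear or disappear in the interior either, so $n(s) = n(s_0)$ for $s$ close to $s_0$.

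\medskip

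Finally I would conclude: the hypothesis that $q^0(t)$ contains no conjugate points on $(0, t_1^0]$ means $n(0) = 0$; by local constancy, $n(1) = n(0) = 0$, i.e.\ $q^1(t)$ has no conjugate points on $(0, t_1^1]$, which is the desired statement. The main obstacle is the rigorous justification of local constancy of $n(s)$, i.e.\ ruling out both the birth/annihilation of interior conjugate times and their escape through the endpoint as $s$ moves; in particular one must control zeros that are not transverse in $(s,t)$. The clean way around this is the hypothesis $D(s, t_1^s) \neq 0$ for all $s$, which guarantees the right endpoint is never a zero, so the total number of zeros in the open-closed interval cannot change by a boundary crossing; interior zeros are handled by the isolation property together with a compactness/covering argument on $[0,1]$. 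Since this is exactly the content of Corollary~2.2 of~\cite{el_conj}, I would cite that result for the detailed continuity estimates rather than reprove them from scratch.
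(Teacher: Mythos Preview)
The paper does not prove this theorem at all: it is stated as Corollary~2.2 of~\cite{el_conj} and simply cited, with no argument given. So there is no ``paper's own proof'' to compare against; your sketch is already more than what the present paper supplies.

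Your outline is in the right spirit, but it has a genuine soft spot. Counting zeros of the Jacobian determinant $D(s,t)$ in $(0,t_1^s]$ and arguing local constancy requires you to rule out interior birth/death of zeros, and a ``standard degree/continuity argument'' does \emph{not} do this in general: for a continuous (even analytic) one-parameter family of real-valued functions, pairs of zeros can appear or annihilate in the interior without any boundary crossing, and the hypothesis $D(s,t_1^s)\neq 0$ says nothing about that. The argument that actually works in~\cite{el_conj} (and is what the paper has in mind when it writes ``homotopy invariance of index of the second variation'' in the proofs of Theorems~\ref{th:tconjmaxC1} and~\ref{th:tconjmaxC2}) goes through the \emph{Morse index}: under \Ho--\Hf, the number of conjugate times in $(0,T]$ counted with multiplicity equals the index of the second variation on $[0,T]$. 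This index is integer-valued, upper semicontinuous in the data, and \emph{nondecreasing in $T$}; the monotonicity is what forbids interior cancellation and makes the count a bona fide homotopy invariant once the endpoint $t_1^s$ is never conjugate. If you want to keep a determinant-based proof, you would need to invoke this monotonicity (or equivalently the positivity structure coming from the Legendre condition) explicitly, not just continuity of $D$. Since you end by citing~\cite{el_conj} anyway, the cleanest fix is to replace the zero-count $n(s)$ by the Morse index and appeal to its homotopy invariance directly.
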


One easily checks that the sub-Riemannian problem~\eq{pr1}, \eq{pr2}, \eq{J} satisfies all hypotheses \Ho--\Hf, so the results cited in this section are applicable to this problem.

\section{Estimate of conjugate time for $\lambda \in C_1$}
\label{sec:C1}
\subsection{Evaluation of Jacobian }
We use the elliptic coordinates $(\varphi, k, \alpha)$ in $C_1$, see~\cite{engel}. For a fixed $\lambda = (\theta, c, \alpha) \in C_1$, conjugate times are roots $t>0$ of the Jacobian
$$J= \frac{\partial(x, y,z,v)}{\partial(t,\varphi, k, \alpha)}.$$
We transform this Jacobian in the same way as the determinant $\ds\frac{\partial(x, y,z,v)}{\partial(\theta,c, \alpha, t)}$ in Subsection~\ref{subsec:J_transform}, then get
$$J=-\frac{1}{2} \, \begin{array}{|c c c c|}
\frac{\partial x}{\partial t} & \frac{\partial x}{\partial \varphi} & \frac{\partial x}{\partial k} & x \\
\frac{\partial y}{\partial t} & \frac{\partial y}{\partial \varphi} &  \frac{\partial y}{\partial k} & y \\
\frac{\partial z}{\partial t} & \frac{\partial z}{\partial \varphi} & \frac{\partial z}{\partial k} & 2z \\
\frac{\partial v}{\partial t} & \frac{\partial v}{\partial \varphi} & \frac{\partial v}{\partial k} & 3v
\end{array}\, .$$
(Here and below we assume $\alpha = 1$ according to Subsec.~\ref{subsec:dilations}.)
Explicit calculation of the function with the use of the parameterization of the exponential mapping obtained in~\cite{engel} gives the following expression of the determinant:
\begin{eqnarray}
&&J=R \cdot J_1, \nonumber\\
&&R=-\frac{32}{k(1-k^2)(1-k^2\sin^2 u_1 \sin^2 u_2)^2}\neq 0, \nonumber\\
&&J_1=d_0+d_2 \sin^2 u_2 +d_4 \sin^4 u_2, \nonumber\\
&& d_i = d_i(u_1, k), \quad i=0,2,4, \nonumber\\
&&u_1= \am(p, k),\quad u_2= \am (\tau,k), \label{uiptau}\\
&& p= \frac{t}{2}, \quad \tau=\varphi+\frac{t}{2}, \label{ptautphi}\\
&&d_0=a_1\cdot\sin u_1,\label{d0}\\
&&d_4=k^2 a_2\cdot f_{zu},\label{d4}\\
&&d_0+d_2 = -a_2\cdot f_{zu},\label{d0+d2}\\
\end{eqnarray}
\begin{multline*}
a_1= \frac{1}{2}
\left[4(1-k^2) \cos u_1 \left( 1- 2k^2 \sin^2 u_1 \right) \sqrt{1-k^2 \sin^2 u_1}F^2(u_1)+ 
 +4k^2 \cos u_1 \sin^2 u_1 \left(1- \right. \right. \\
 \left. -k^2 \sin^2 u_1\right)^{\frac{3}{2}}+ 4 \sin u_1\left(1-k^2\sin^2 u_1\right)E^3(u_1)-2 \left(1-k^2\right) \sin u_1 \left(1-k^2 \sin^2 u_1\right) F^3(u_1)+\\
 +2F(u_1)\left(\sin u_1-2k^2\left(3-2k^2\right)\sin^3 u_1+k^4\left(5-4k^2\right)\sin^5 u_1\right)+
 +E^2(u_1) ( 2 \left(4k^2-5\right) \left(1- \right.\\
 \left. -k^2\sin^2 u_1\right) \sin u_1 F(u_1)+ 6\cos u_1 \left( 1-2k^2\sin^2 u_1 \right) \sqrt{1-k^2 \sin^2 u_1})+ E(u_1)\left(2\left(4k^2-\right. \right. \\
 \left. - 5\right)\cos u_1 \left(1-2 k^2 \sin^2 u_1\right) \sqrt{1-k^2 \sin^2 u_1} F(u_1)+ 8\left(1-k^2\right)\left(1-k^2 \sin^2 u_1\right)\sin u_1  F^2(u_1)-\\
\left.\left. -2\left(1+k^2+ 3k^2 \cos(2u_1)\right)\sin u_1 \left(1-k^2 \sin^2 u_1\right)
\vphantom{\sqrt{1-k^2 \sin^2 u_1}} \right)\right],
\end{multline*}
\begin{multline*}
a_2= -\cos u_1\left(\left(E(u_1)-F(u_1)\right)^2+k^2 F(u_1) \left(2E(u_1)-F(u_1)\right)\right)-\\
-\sin u_1 \sqrt{1-k^2 \sin^2 u_1}\left( E(u_1)-\left(1-k^2\right) F(u_1)\right),
\end{multline*}
\begin{align*}
f_{zu} = \sin u_1 \sqrt{1-k^2 \sin^2 u_1}+\left(F(u_1)-2E(u_1)\right)\cos u_1.
\end{align*}

Denote
\be{xu2}
x= \sin^2 u_2.
\ee

\subsection{Conjugate points as $k \to 0$}

We show that extremals corresponding to sufficiently small values of the parameter $k$ have no conjugate points for $t < \tmax(\lam)$.

The function $J_1$ has the following asymptotics as $k \to 0$:
	\begin{align*}
&J_1(u_1, x, k) = k^2 J_1^0 (u_1,x) + o(k^2), \qquad x = \sin^2 u_2,\\
&J_1^0 (u_1, x) = d_0^0(u_1) + d_2^0 (u_1) x, \\
&d_0^0 (u_1) = \frac{1}{2} \sin u_1 \big(2 u_1^3 \sin u_1 + 3 u_1^2 \cos u_1 + u_1 \sin^3 u_1 - 6 u_1 \sin u_1 + 3 \cos u_1 \sin^2 u_1 \big), \\
&d_2^0 (u_1) = - u_1 \sin^4 u_1 - 2 \cos u_1 \sin^3 u_1 + 3 u_1 \sin^2 u_1 - u_1^3. 
\end{align*}

\subsubsection{Auxiliary lemmas}

We use the following statement to obtain the necessary bounds for functions.
\begin{lem}\label{lem:l1}
Let real analytic functions $f(u), g(u)$ satisfy on $(0,u_0)$ the conditions:
\begin{align}
&f(u) \not \equiv 0, \qquad g(u)>0, \qquad \bigg(\frac{f(u)}{g(u)}\bigg)'\geq 0, \label{cond1}\\
&\lim_{u \to 0} \frac{f(u)}{g(u)} = 0. \label{cond2}
\end{align}
Then $f(u)>0$ for $u \in (0, u_0)$.
\end{lem}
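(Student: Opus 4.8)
The plan is to prove Lemma~\ref{lem:l1} by a standard monotonicity-and-limit argument combined with the rigidity that real analyticity provides. First I would observe that, by condition~\eq{cond1}, the quotient $h(u) := f(u)/g(u)$ is nondecreasing on $(0, u_0)$, and by~\eq{cond2} it tends to $0$ as $u \to 0^+$. Since a nondecreasing function starting from limit value $0$ is nonnegative, we get $h(u) \geq 0$, and because $g(u) > 0$ on $(0, u_0)$ this already yields $f(u) = h(u)\, g(u) \geq 0$ throughout the interval. So the only thing left to upgrade is the strict inequality: we must rule out that $f$ (equivalently $h$) vanishes at some interior point.

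Next I would use analyticity. Suppose $f(u_*) = 0$ for some $u_* \in (0, u_0)$; then $h(u_*) = 0$. Since $h$ is nondecreasing and $h \geq 0$ with $h(0^+) = 0$, monotonicity forces $h \equiv 0$ on the whole subinterval $(0, u_*]$: indeed for $0 < u \leq u_*$ we have $0 \leq h(u) \leq h(u_*) = 0$. Hence $f \equiv 0$ on $(0, u_*]$. But $f$ is real analytic on $(0, u_0)$, and a real analytic function that vanishes on a subinterval of positive length vanishes identically on the connected domain $(0, u_0)$ by the identity theorem. This contradicts the hypothesis $f \not\equiv 0$ in~\eq{cond1}. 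Therefore $f$ has no zero in $(0, u_0)$, and combined with $f \geq 0$ we conclude $f(u) > 0$ for all $u \in (0, u_0)$.

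I do not anticipate a serious obstacle here; the statement is an elementary real-analysis lemma and the proof is short. The one point that requires a little care is the passage from ``$h$ nondecreasing and $h \to 0$'' to ``$h \geq 0$'': one should note that if $h(u_0') < 0$ for some point $u_0'$, then by monotonicity $h(u) \leq h(u_0') < 0$ for all $u < u_0'$, contradicting $\lim_{u\to 0} h(u) = 0$. A second small subtlety is making sure the interval on which $f$ vanishes is genuinely of positive length (it is, being $(0, u_*]$), so that the identity theorem applies; analyticity is exactly what forbids $f$ from touching zero at isolated interior points while staying nonnegative, which is why the hypothesis is stated with ``analytic'' rather than merely ``differentiable.'' No further machinery is needed.
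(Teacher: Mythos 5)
Your proof is correct and follows essentially the same strategy as the paper's: both combine the monotonicity of $f/g$ with the limit condition~\eq{cond2} and then invoke real analyticity to upgrade nonnegativity to strict positivity. The only difference is where the analytic rigidity is applied --- the paper argues that the analytic function $(f/g)'$ has isolated zeros (unless identically zero, which is excluded by $f\not\equiv 0$), so that $f/g$ is strictly increasing, whereas you apply the identity theorem to $f$ itself after showing that a zero of $f$ at an interior point would force $f\equiv 0$ on a whole subinterval; both routes are equally valid.
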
 

If functions $f$ and $g$ satisfy conditions~\eq{cond1},~\eq{cond2}, then we say that $g$ is a comparison function for $f$ on the interval $(0,u_0)$.

\begin{proof}
The function $\ds\left(\frac{f}{g}\right)'$ is real analytic, thus it either has isolated zeros or is identically zero. It is not hard to prove that the second case is impossible: if $\ds\left(\frac{f}{g}\right)' \equiv 0$ then $\ds\frac{f}{g} \equiv \const$, hence $\ds\frac{f}{g} \equiv 0$ (because $\ds\lim_{u \to 0} \frac{f(u)}{g(u)} = 0$), whence $f \equiv 0$, this contradiction proves the case.

So the function $\ds\left(\frac{f}{g}\right)' \geq 0$ has isolated zeros therefore $\ds\frac{f}{g}$ strictly increases for $u \in (0, u_0)$. The inequality $\ds\frac{f(u)}{g(u)}>0$ follows from the equality~\eq{cond2}, so the inequality $f(u)>0$ for $u \in (0,u_0)$ follows from $g(u)>0$.
\end{proof}

We use the following statement to estimate a quadratic polynomial.

\begin{lem}\label{lem:l4}
If $f(0,y)>0, f(1,y)>0$ and $a(y) \leq 0$ for the function $f(x,y) = a(y) x^2 + b(y) x + c(y)$ with $y \in (0, y_0)$, then $f(x,y) > 0$ for $y \in (0, y_0), x \in [0,1]$.
\end{lem}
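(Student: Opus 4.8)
The plan is to exploit concavity of $f$ in the variable $x$. Fix $y \in (0, y_0)$ and set $g(x) = f(x,y) = a(y) x^2 + b(y) x + c(y)$. Since the leading coefficient satisfies $a(y) \le 0$, we have $g''(x) = 2 a(y) \le 0$ for all $x$, so $g$ is concave on $\R$ (in the degenerate case $a(y) = 0$ it is affine, which is still concave). A concave function on a segment attains its minimum at an endpoint of the segment, hence for every $x \in [0,1]$
$$ f(x,y) = g(x) \ \geq\ \min\{g(0),\, g(1)\} \ =\ \min\{f(0,y),\, f(1,y)\} \ >\ 0, $$
where the strict inequality uses the two hypotheses $f(0,y) > 0$ and $f(1,y) > 0$. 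Since $y \in (0, y_0)$ was arbitrary, this gives $f(x,y) > 0$ for all $x \in [0,1]$, $y \in (0, y_0)$, as claimed.

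If one prefers to avoid invoking the endpoint-minimum principle as a black box, the same argument can be made elementary: writing $x = (1-x)\cdot 0 + x \cdot 1$ as a convex combination of the endpoints of $[0,1]$ and applying the concavity (Jensen) inequality for $g$ yields $g(x) \geq (1-x)\,g(0) + x\,g(1)$; the right-hand side is a convex combination, with nonnegative weights summing to $1$, of the strictly positive numbers $f(0,y)$ and $f(1,y)$, hence is strictly positive.

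There is essentially no obstacle here. The only point worth a word is that the hypothesis is $a(y) \le 0$ rather than $a(y) < 0$: this is harmless, since an affine function is also concave, so the endpoint (respectively Jensen) argument goes through unchanged in the degenerate case $a(y) = 0$. In the intended application $x = \sin^2 u_2$ ranges over $[0,1]$ and $y = u_1$, so Lemma~\ref{lem:l4} reduces checking positivity of the quartic-in-$\sin u_2$ quantity $J_1$ to checking it at $\sin^2 u_2 = 0$ and $\sin^2 u_2 = 1$ together with the sign of its leading coefficient, which is where Lemma~\ref{lem:l1} will then be used.
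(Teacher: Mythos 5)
Your proof is correct and follows essentially the same route as the paper: the condition $a(y)\le 0$ makes $f(\cdot,y)$ concave in $x$, so its minimum on $[0,1]$ is attained at an endpoint, where positivity is assumed. (The paper's own one-line proof says ``convex'' where it means concave, but the argument is identical to yours.)
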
 

\begin{proof}
We obviously have the inequality $f(x,y)>0$ for $x=0$ and $x=1$, $y \in (0,y_0)$. Since $a(y) \leq 0$, it follows that $f(x,y)$ is convex w.~r.~t. the variable $x$ (possibly not strictly). Consequently we get $f(x,y)>0$ for $y \in (0, y_0), x \in [0,1]$.
\end{proof}

In the following three lemmas we analyze the sign of the function $J^0_1(u_1, x)$, which is dominant term of asymptotics for the function $J_1(u_1, x, k)$ as $k \rightarrow 0$.
\begin{lem}\label{lem:l2}
The function $d_0^0(u_1) = \frac{1}{2} \sin u_1 \big(2 u_1 \sin u_1 (u_1^2 -3) + 3 \cos u_1 (u_1^2+\sin^2 u_1) + u_1 \sin^3 u_1 \big) < 0$ for $u_1 \in (0, \pi)$.
\end{lem}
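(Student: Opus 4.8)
The plan is to prove the inequality
$$
d_0^0(u_1) = \frac{1}{2}\sin u_1 \bigl(2 u_1 \sin u_1(u_1^2-3) + 3\cos u_1(u_1^2+\sin^2 u_1) + u_1\sin^3 u_1\bigr) < 0, \qquad u_1\in(0,\pi),
$$
by reducing it to a sign statement about the bracketed factor and then applying the comparison-function technique of Lemma~\ref{lem:l1}. Since $\sin u_1 > 0$ on $(0,\pi)$, it suffices to show that
$$
g_0(u_1) := 2 u_1\sin u_1(u_1^2-3) + 3\cos u_1(u_1^2+\sin^2 u_1) + u_1\sin^3 u_1 < 0, \qquad u_1\in(0,\pi).
$$
First I would check the behaviour at the endpoints and near $0$: a Taylor expansion at $u_1=0$ should give $g_0(u_1) = O(u_1^5)$ with a negative leading coefficient (the lower-order terms cancel, which is why the bound is delicate), and at $u_1=\pi$ one gets $g_0(\pi) = -3\pi^2 < 0$. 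So the inequality is at least plausible throughout and tight only in the limit $u_1\to 0$.

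The core of the argument is to apply Lemma~\ref{lem:l1} to $f(u_1) := -g_0(u_1)$ with a suitable positive comparison function $g(u_1)$ on $(0,\pi)$ — the natural first candidates being $g(u_1) = u_1^3$ or $g(u_1) = \sin^3 u_1$ or $g(u_1) = u_1\sin^2 u_1$, chosen so that $f/g$ has limit $0$ at $u_1 = 0$ (matching the $O(u_1^5)$ vanishing against an $O(u_1^3)$ denominator) and so that $(f/g)'\ge 0$ can be verified. Computing $(f/g)'$ introduces one more derivative; if the resulting numerator is still not manifestly of one sign, I would iterate — i.e.\ show $(f/g)'\ge 0$ by exhibiting in turn a comparison function for $f/g$, using Lemma~\ref{lem:l1} again, repeating until reaching an expression whose sign is elementary (for instance something reducible to the classical inequalities $\sin u_1 < u_1$, $\tan u_1 > u_1$, or $\cos u_1 < 1 - u_1^2/2 + u_1^4/24$ on the relevant range, or a polynomial-in-$u_1$ bound after replacing $\sin,\cos$ by truncated series with controlled remainder). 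A complication is that $\cos u_1$ changes sign at $u_1=\pi/2$, so a single comparison function valid on all of $(0,\pi)$ may not exist; in that case I would split the interval at $\pi/2$ and treat $(0,\pi/2]$ and $[\pi/2,\pi)$ separately, using on the second piece the crude bounds $\cos u_1 < 0$, $u_1^2 - 3 > 0$ once $u_1 > \sqrt3$, and $u_1\sin^3 u_1 \le u_1\sin u_1$ to absorb the lone positive term.

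The main obstacle I anticipate is precisely controlling the near-zero cancellation: because $g_0$ vanishes to fifth order at the origin, any comparison function must be chosen with the correct order and the verification $(f/g)'\ge 0$ will likely not terminate after a single step, so the argument becomes a nested sequence of applications of Lemma~\ref{lem:l1} whose bookkeeping is the delicate part. A secondary obstacle is producing rigorous (not merely numerical) polynomial bounds for the trigonometric remainders valid on the whole interval $(0,\pi)$, which is why the split at $\pi/2$ is likely unavoidable. Everything else — the endpoint evaluations and the reduction from $d_0^0$ to $g_0$ — is routine.
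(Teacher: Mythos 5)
Your overall strategy --- factor out $\sin u_1>0$ and apply the comparison-function Lemma~\ref{lem:l1} to what remains --- is the right one, and it is essentially what the paper does (the paper applies Lemma~\ref{lem:l1} directly to $-d_0^0$ rather than to the bracket). But as written the proposal has a genuine gap: it never exhibits a comparison function for which the monotonicity condition $(f/g)'\ge 0$ can actually be verified, and the heuristic you use to pick candidates starts from a miscalculation. The bracket does not vanish like $O(u_1^5)$: the cancellation at the origin is much deeper --- all Taylor coefficients through order $8$ vanish and the leading term is $-\frac{8}{4725}\,u_1^{10}$ (equivalently $d_0^0=-\frac{4}{4725}u_1^{11}+o(u_1^{11})$, as the paper records). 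Your candidates $u_1^3$, $\sin^3u_1$, $u_1\sin^2u_1$ still satisfy the limit condition \eq{cond2}, so that is not fatal by itself, but for none of them is there any reason to expect $(f/g)'\ge 0$ on all of $(0,\pi)$, and you explicitly defer this --- which is the entire content of the lemma --- to an anticipated ``nested sequence of applications'' plus a split at $\pi/2$, neither of which is carried out or shown to terminate.

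The missing idea is the specific choice $g(u_1)=\sin u_1\,(\sin u_1-u_1\cos u_1)$. With this $g$ one computes
$$
\left(\frac{-d_0^0(u_1)}{g(u_1)}\right)'=\frac{\bigl(2(-1+u_1^2+\cos(2u_1))+u_1\sin(2u_1)\bigr)^2}{4(\sin u_1-u_1\cos u_1)^2}\;\ge\;0,
$$
i.e.\ the derivative of the ratio is a perfect square over a positive square, so the verification terminates in a single step on the whole interval, with no splitting at $\pi/2$ and no iteration. Positivity of $g$ on $(0,\pi)$ is elementary ($\sin u_1-u_1\cos u_1$ vanishes at $0$ and has derivative $u_1\sin u_1>0$ there), and $-d_0^0/g=\frac{4}{1575}u_1^7+o(u_1^7)\to 0$, so Lemma~\ref{lem:l1} applies. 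Until you either produce such a $g$ or actually carry your iterated/split argument through to a sign-definite endpoint, the proposal remains a plan rather than a proof.
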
 

\begin{proof}
We show that the function $g(u_1) = \sin u_1 (\sin u_1 - u_1 \cos u_1)$ is a comparison function for $-d_0^0 (u_1)$ for $u_1 \in (0, \pi)$.

The inequality $d_0^0 (u_1) \not \equiv 0$ follows from the expansion $d_0^0 (u_1) = - \frac{4}{4725} u^{11} + o(u^{11})$.

If $u_1 \in (0, \pi)$, then $\sin u_1 >0$. Further, $\phi (u_1) = \sin u_1 - u_1 \cos u_1 > 0$ for $u_1 \in (0, \pi)$, since $\phi (0) = 0, \phi'(u_1) = u_1 \sin u_1 >0$. Therefore $g(u_1)>0$ for $u_1 \in (0, \pi)$.

Finally we get the equalities
$$\ds\left(\frac{-d_0^0(u_1)}{g(u_1)}\right)' = \frac{(2(-1+u_1^2+\cos(2 u_1)) + u_1 \sin(2 u_1))^2}{4 (\sin u_1 - u_1 \cos u_1)^2}$$ and $\ds\frac{-d_0^0(u_1)}{g(u_1)}=\frac{4}{1575}u_1^7+o(u_1^7)$.

So $g(u_1)$ is a comparison function for $-d_0^0(u_1)$ thus it follows from Lemma~\ref{lem:l1} that $d_0^0(u_1)<0$ for $u_1 \in (0, \pi)$.
\end{proof}

\begin{lem}\label{lem:l3}
If $u_1 \in (0, \pi)$, then
\begin{multline*}
d_0^0(u_1)+d_2^0(u_1) = \frac{1}{2} \big(-2 u_1^3 -\sin u_1 (-2 u_1^3 \sin u_1 + u_1 \sin^3 u_1 + \cos u_1 (-3 u_1^2 + \sin^2 u_1) )\big) < 0.
\end{multline*}
\end{lem}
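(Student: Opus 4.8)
The plan is to apply Lemma~\ref{lem:l1} again, exactly as in the proof of Lemma~\ref{lem:l2}, but now to the function $-(d_0^0 + d_2^0)(u_1)$ on $(0,\pi)$; that is, to exhibit an explicit comparison function $g(u_1)>0$ for it. First I would verify $d_0^0 + d_2^0 \not\equiv 0$ via the leading term of its Taylor expansion at $u_1 = 0$ (a small monomial computation analogous to the $-\frac{4}{4725}u^{11}$ term appearing in Lemma~\ref{lem:l2}), and confirm the sign of that leading coefficient is consistent with the claimed negativity. Then I would guess a candidate $g$ built from the same ingredients that worked before — something like $g(u_1) = \sin u_1\,(\sin u_1 - u_1 \cos u_1)$, or a closely related product of $\sin u_1$ and the positive-on-$(0,\pi)$ factor $\phi(u_1) = \sin u_1 - u_1 \cos u_1$ (whose positivity was already established in Lemma~\ref{lem:l2}), possibly with an extra factor such as $u_1$ or $(1-\cos u_1)$ to match the order of vanishing at $0$.

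The crux is the monotonicity condition $\bigl(\tfrac{-(d_0^0+d_2^0)}{g}\bigr)' \geq 0$ on $(0,\pi)$. The strategy mirrors Lemma~\ref{lem:l2}: after differentiating and clearing the denominator $g^2 > 0$, one hopes the numerator simplifies to a perfect square (as it did there, yielding $\bigl(2(-1+u_1^2+\cos 2u_1) + u_1 \sin 2u_1\bigr)^2$), or at least to a manifestly nonnegative trigonometric expression. If the first natural choice of $g$ does not make the numerator a square, I would adjust $g$ — the freedom in choosing the comparison function is precisely what one exploits here — trying products and low-degree combinations of $\sin u_1$, $1-\cos u_1$, $u_1$, and $\phi(u_1)$ until the derivative has a recognizable sign. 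Finally, I would check $\lim_{u_1\to 0}\frac{-(d_0^0+d_2^0)(u_1)}{g(u_1)} = 0$, again by comparing Taylor orders, so that~\eq{cond2} holds.

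The main obstacle I anticipate is finding the right $g$: there is no algorithmic recipe, and the success of Lemma~\ref{lem:l1} hinges entirely on the numerator of $(f/g)'$ turning out nonnegative, which is a delicate matching of the zero of $g$ at the origin and the oscillatory structure on $(0,\pi)$. A secondary subtlety is behavior near the endpoint $u_1 = \pi$, where $\sin u_1 \to 0$; one must ensure $g$ does not vanish to higher order than $-(d_0^0+d_2^0)$ there, lest the ratio blow up and spoil monotonicity. If a single comparison function cannot be found for the whole interval, a fallback is to split $(0,\pi)$ into subintervals (e.g.\ $(0,\pi/2]$ and $[\pi/2,\pi)$) and argue separately, or to write $-(d_0^0+d_2^0)$ as a sum of terms each handled by its own comparison function. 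Given that Lemma~\ref{lem:l2} succeeded with a clean square, I expect the same to work here after a short search, so the proof should be short once the correct $g$ is identified.
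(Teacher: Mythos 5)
Your proposal is correct and follows essentially the same route as the paper: the paper applies Lemma~\ref{lem:l1} to $-(d_0^0+d_2^0)$ with exactly your first candidate $g(u_1)=\sin u_1(\sin u_1-u_1\cos u_1)$, verifies $d_0^0+d_2^0=-\tfrac{4}{135}u_1^9+o(u_1^9)$ and the limit condition, and the derivative does simplify to the perfect square $\bigl(\tfrac{-(d_0^0+d_2^0)}{g}\bigr)'=\tfrac{(-2u_1+\sin(2u_1))^2}{4\sin^2 u_1}$, so no adjustment of $g$ or splitting of the interval is needed.
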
 

\begin{proof}
We check that the function $g(u_1) = \sin u_1 (\sin u_1 - u_1 \cos u_1)$ is a comparison function for $-(d_0^0 (u_1)+d_2^0 (u_1))$ for $u_1 \in (0, \pi)$.

The inequality $d_0^0 (u_1)+d_2^0 (u_1) \not \equiv 0$ follows from the expansion $d_0^0 (u_1)+d_2^0 (u_1) = - \frac{4}{135} u^9 + o(u^9)$.

Note that $g(u_1)>0$ for $u_1 \in (0, \pi)$ (see the proof of Lemma~\ref{lem:l2}).

Also, there hold the equalities $$\left(\frac{-(d_0^0(u_1)+d_2^0(u_1))}{g(u_1)}\right)' = \frac{(-2 u_1 + \sin(2 u_1))^2}{4 \sin^2 u_1}$$ and $$\frac{-(d_0^0(u_1)+d_2^0(u_1))}{g(u_1)}=\frac{4}{45}u_1^5+o(u_1^5).$$

Finally, $g(u_1)$ is a comparison function for $-(d_0^0(u_1)+d_2^0(u_1))$ therefore it follows from Lemma~\ref{lem:l1} that $d_0^0(u_1)+d_2^0(u_1)<0$ for $u_1 \in (0, \pi)$.
\end{proof}

\begin{lem}\label{lem:J10<0}
For any $u_1 \in (0, \pi)$, $x \in [0,1]$, we have $J^0_1(u_1, x)<0$
\end{lem}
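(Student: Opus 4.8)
The statement claims that $J^0_1(u_1,x) = d_0^0(u_1) + d_2^0(u_1)\,x < 0$ for all $u_1 \in (0,\pi)$ and $x \in [0,1]$. Since $J^0_1$ is \emph{linear} (not quadratic) in $x$, it suffices to verify negativity at the two endpoints $x=0$ and $x=1$: a linear function on $[0,1]$ attains its extreme values at the endpoints. This is precisely the content of Lemma~\ref{lem:l4} in the degenerate case $a(y)\equiv 0$ (convexity holds trivially), so the plan is to invoke that lemma with $y = u_1$, $y_0 = \pi$.

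The two endpoint inequalities are exactly what the preceding two lemmas supply: $J^0_1(u_1, 0) = d_0^0(u_1) < 0$ is Lemma~\ref{lem:l2}, and $J^0_1(u_1, 1) = d_0^0(u_1) + d_2^0(u_1) < 0$ is Lemma~\ref{lem:l3}. Therefore the proof is a one-line assembly: apply Lemma~\ref{lem:l4} with the quadratic coefficient taken to be zero, using Lemmas~\ref{lem:l2} and~\ref{lem:l3} for the hypotheses $f(0,y)>0$ and $f(1,y)>0$ (after the harmless sign flip $f = -J^0_1$).

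There is essentially no obstacle here: all the analytic work — constructing comparison functions, checking the monotonicity of $f/g$ via the displayed squared-expression derivatives, and reading off the leading Taylor terms to verify $f \not\equiv 0$ and the limit condition — has already been done in Lemmas~\ref{lem:l2} and~\ref{lem:l3}. The only thing to be careful about is the direction of the inequality: Lemma~\ref{lem:l4} is stated for a function that is positive at the endpoints, so one works with $-J^0_1$ rather than $J^0_1$ itself, and the conclusion $-J^0_1 > 0$ gives $J^0_1 < 0$ as desired.

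\begin{proof}
The function $J^0_1(u_1, x) = d_0^0(u_1) + d_2^0(u_1)\, x$ is affine in $x$, i.e.\ a polynomial in $x$ of degree at most two whose leading coefficient is identically zero. By Lemma~\ref{lem:l2}, $-J^0_1(u_1, 0) = -d_0^0(u_1) > 0$ for $u_1 \in (0,\pi)$, and by Lemma~\ref{lem:l3}, $-J^0_1(u_1, 1) = -(d_0^0(u_1) + d_2^0(u_1)) > 0$ for $u_1 \in (0,\pi)$. Applying Lemma~\ref{lem:l4} to the function $-J^0_1(x, u_1)$ (with vanishing quadratic coefficient, so the convexity hypothesis holds trivially) on $u_1 \in (0,\pi)$, we conclude $-J^0_1(u_1, x) > 0$, that is, $J^0_1(u_1, x) < 0$, for all $u_1 \in (0,\pi)$ and $x \in [0,1]$.
\end{proof}
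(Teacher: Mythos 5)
Your proof is correct and matches the paper's argument: the paper likewise verifies $J_1^0(u_1,0)<0$ via Lemma~\ref{lem:l2} and $J_1^0(u_1,1)<0$ via Lemma~\ref{lem:l3}, then invokes Lemma~\ref{lem:l4}. Your explicit remarks about the sign flip and the vanishing quadratic coefficient are careful touches the paper leaves implicit, but the route is identical.
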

\begin{proof}
If $u_1 \in (0, \pi)$, then $J_1^0 (u_1,0) < 0$ (see Lemma~\ref{lem:l2}), $J_1^0 (u_1,1)<0$ (see Lemma~\ref{lem:l3}), and so it follows from Lemma~\ref{lem:l4} that $J_1^0 (u_1, x) <0$ for $u_1 \in (0, \pi), x \in [0,1]$. 
\end{proof}

\subsubsection{Estimate of conjugate time as $k \to 0$}
\begin{proposition} \label{prop:k=0u}
There exists $\bar{k}\in (0,1)$, s.~t. for any $k \in \left(0, \bar{k}\right)$, $u_1 \in (0, \pi )$, $x \in [0, 1]$ we have $J_1(u_1,x,k)<0$.
\end{proposition}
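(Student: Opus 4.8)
The plan is to upgrade the strict inequality $J_1^0(u_1,x)<0$ on the compact-in-$x$ but open-in-$u_1$ domain $(0,\pi)\times[0,1]$ (Lemma~\ref{lem:J10<0}) to the statement that the full function $J_1(u_1,x,k)$, which differs from $k^2 J_1^0(u_1,x)$ by $o(k^2)$ as $k\to0$, stays negative for all small $k$. The obstacle is that Lemma~\ref{lem:J10<0} gives a \emph{strict} bound but not a \emph{uniform} one: as $u_1\to 0^+$ we have $J_1^0(u_1,x)\to 0$ (indeed $d_0^0,d_2^0$ vanish to high order in $u_1$), so one cannot simply say ``$J_1^0$ is bounded away from $0$, hence the perturbation cannot flip the sign.'' One must control the rate at which $J_1^0$ degenerates near $u_1=0$ and compare it with the rate at which the remainder in the $k$-expansion of $J_1$ degenerates there as well.

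First I would make the $k\to0$ asymptotics of $J_1$ uniform and quantitative. Since $J_1(u_1,x,k)=d_0(u_1,k)+d_2(u_1,k)x+d_4(u_1,k)x^2$ with $d_i$ real-analytic in $(u_1,k)$ (and, via~\eq{d0}--\eq{d0+d2}, built from $a_1,a_2,f_{zu}$ which are analytic in $u_1$ and polynomial-type in $k$), each $d_i$ has a convergent double expansion; dividing out the common factor $\sin u_1$ present in $d_0$ and a corresponding lowest-order factor in $d_2,d_4$, one checks that $J_1(u_1,x,k)=k^2\big(J_1^0(u_1,x)+k^2 r(u_1,x,k)\big)$ with $r$ continuous on $[0,\pi]\times[0,1]\times[0,\bar k_0]$ for some $\bar k_0<1$ — the point being that the $o(k^2)$ is actually $O(k^4)$ with a \emph{locally bounded} coefficient, uniformly in $(u_1,x)$ on the closed box. (Here I use that $J_1^0$ already accounts for the whole $k^2$-coefficient, so the next correction is $k^4$.)

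Next I would handle the degeneracy at $u_1=0$. From Lemma~\ref{lem:l2} we have $d_0^0(u_1)=-\tfrac{4}{4725}u_1^{11}+o(u_1^{11})$ and from Lemma~\ref{lem:l3} that $d_0^0+d_2^0=-\tfrac{4}{135}u_1^9+o(u_1^9)$, so $J_1^0(u_1,x)$ has a zero of order exactly $9$ at $u_1=0$ (the $x$-coefficient $d_2^0$ dominates); writing $J_1^0(u_1,x)=u_1^9\,\Psi(u_1,x)$ with $\Psi$ continuous on $[0,\pi]\times[0,1]$, Lemma~\ref{lem:J10<0} forces $\Psi<0$ on $(0,\pi)\times[0,1]$ and the expansions force $\Psi(0,x)<0$ as well; but $\Psi$ need not be defined past $u_1=\pi$, so compactness of $[0,\pi]\times[0,1]$ gives a constant $m>0$ with $\Psi\le -m$ there, hence $J_1^0(u_1,x)\le -m\,u_1^9$ on $(0,\pi)\times[0,1]$. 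Meanwhile the remainder satisfies $|r(u_1,x,k)|\le M$ on the same closed box for all $k\in(0,\bar k_0)$. Then
$$
J_1(u_1,x,k)=k^2\big(J_1^0(u_1,x)+k^2 r\big)\le k^2\big(-m\,u_1^9+k^2 M\big).
$$
This is $<0$ once $k^2 M< m\,u_1^9$, which covers $u_1$ not too small; for the remaining strip $u_1\in(0,\delta)$ with $\delta$ chosen so that $k^2M<m\delta^9$ is handled automatically by the bound $J_1^0\le -m u_1^9$ only if $u_1^9\ge k^2M/m$ — so this crude split does not yet reach $u_1\to0$, and I would instead note that the \emph{same} power $u_1^9$ appears, since each $d_i(u_1,k)-[\text{its }k^2\text{-term}]$ also vanishes at $u_1=0$ to order at least $9$ (inspection of $a_1,a_2,f_{zu}$: each carries enough powers of $u_1$/$\sin u_1$), giving $r(u_1,x,k)=u_1^9\,\tilde r(u_1,x,k)$ with $\tilde r$ bounded by $\tilde M$ on the closed box. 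Hence
$$
J_1(u_1,x,k)\le k^2 u_1^9\big(-m+k^2\tilde M\big)<0 \qquad\text{for all }u_1\in(0,\pi),\ x\in[0,1],
$$
as soon as $k^2\tilde M< m$. Taking $\bar k=\min\{\bar k_0,\sqrt{m/\tilde M}\}$ proves the Proposition. The main work — and the step most likely to need care — is the bookkeeping of the second bullet and the claim ``each $d_i$ minus its $k^2$-coefficient vanishes to order $\ge9$ at $u_1=0$,'' i.e.\ showing that the $u_1$-degeneracy of the correction term is no worse than that of $J_1^0$ itself; this is a finite check on the explicit formulas for $a_1$, $a_2$, $f_{zu}$ and their $k$-derivatives, but it is the linchpin that turns Lemma~\ref{lem:J10<0}'s pointwise strict negativity into a robust one.
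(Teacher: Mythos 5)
Your overall strategy (make the $k\to 0$ asymptotics quantitative and divide out the degeneracy of $J_1^0$ at the boundary) is genuinely different from the paper's, which argues by contradiction with convergent subsequences $u_1^n\to\hat u_1$, $x_n\to\hat x$ and treats the corners $\hat u_1=0$ and $\hat u_1=\pi$ by joint expansions of $d_0,d_2,d_4$. However, your key step is false as stated. You write $J_1^0(u_1,x)=u_1^9\,\Psi(u_1,x)$ and claim that $\Psi<0$ on all of $[0,\pi]\times[0,1]$, so that compactness yields $J_1^0\le -m\,u_1^9$. This fails at two corners. First, at $(u_1,x)=(0,0)$: since $d_0^0(u_1)=-\frac{4}{4725}u_1^{11}+o(u_1^{11})$ vanishes to order $11$ while the order-$9$ contribution $d_2^0(u_1)x$ is killed by $x=0$, one gets $\Psi(u_1,0)\to 0$ as $u_1\to 0$, so $\Psi(0,0)=0$, not $<0$. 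Second, at $(u_1,x)=(\pi,0)$: $d_0^0$ carries an overall factor $\sin u_1$, so $d_0^0(\pi)=0$ and $J_1^0(\pi,0)=0$ (consistent with Lemma~\ref{lem:u1pi}, item 2, where even the full $J_1$ vanishes there). Hence no bound of the form $J_1^0\le -m\,u_1^9$ holds uniformly on $(0,\pi)\times[0,1]$, and your final inequality $J_1\le k^2u_1^9(-m+k^2\tilde M)$ has no valid $m>0$. Note that Lemma~\ref{lem:J10<0} is stated only on the open interval $u_1\in(0,\pi)$ precisely because of this boundary degeneracy.

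To repair the argument you would have to match the normalizing factor to the actual vanishing behaviour of $J_1^0$, e.g.\ prove $J_1^0(u_1,x)\le -m\,(u_1^{11}+u_1^9x)$ near $u_1=0$ and $J_1^0(u_1,x)\le -m\,((\pi-u_1)+x)$ near $u_1=\pi$, together with remainder bounds of the same shape. That is exactly the content of the paper's cases 2.1/2.2 and 3.1/3.2, where the expansions $d_0=-\frac{4}{4725}k^2u_1^{11}+o(k^2u_1^{11})$, $d_2=-\frac{4}{135}k^2u_1^9+o(k^2u_1^9)$ near $u_1=0$, and $d_0=-\frac{3}{2}\pi^2k^2(\pi-u_1)+o(k^2(\pi-u_1))$, $d_2=-\pi^3k^2+o(k^2)$ near $u_1=\pi$, show that all the competing leading terms are simultaneously negative, so the sign survives in every limiting regime of $(u_1,x)$. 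Your remaining bookkeeping concerns (the $O(k^4)$ form of the remainder and its order of vanishing in $u_1$) are secondary; the missing idea is the treatment of the corners $(0,0)$ and $(\pi,0)$, where $J_1^0$ degenerates beyond order $u_1^9$ and a single-power normalization cannot work.
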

\begin{proof}
Assume the converse. Then there exist sequences $\{k_n \}$, $\left\{u^n_1\right\}$, $\left\{x_n \right\}$, $n \in \N$, s.~t. $k_n \in (0,1)$, $k_n \rightarrow 0$, $u^n_1 \in (0, \pi)$, $x_n \in [0,1]$, and  $J_1\left(u^n_1, x_n, k_n\right)\geq 0$ for all $n \in \N$. By passing to subsequences, we can assume that $u^n_1 \rightarrow \hat{u}_1 \in [0, \pi]$, $x_n \rightarrow \hat{x} \in [0, 1]$.

1)
Let $\hat{u}_1 \in (0,\pi)$. 
From Lemma~\ref{lem:J10<0}, we get $J^0_1 (u_1, x)<0$ for all $u_1 \in (0, \pi)$, $x \in [0,1]$. Thus $J_1 (u^n_1, x_n, k_n)= k^2_n \left(J^0_1\left(u^n_1, x_n\right)+o(1)\right)<0$ for large values of $n$, a contradiction.

2) 
Let $\hat{u}_1=0$. As $k^2+u^2_1 \rightarrow 0$ we have
	\begin{eqnarray*}
	&&d_0= -\frac{4}{4725}k^2 u^{11}_1 +o(k^2 u^{11}_1),\\
	&&d_2= - \frac{4}{135}k^2 u^9_1 +o(k^2 u^9_1),\\
	&&d_4= \frac{4}{135} k^4 u^9_1 +o(k^4 u^9_1).
   \end{eqnarray*}

2.1) 
If $\hat{x} \neq 0$, then $J_1 = - \frac{4}{135}k^2 u^9_1  x +o(k^2 u^9_1)$. Therefore $J_1(u^n_1, x_n, k_n)<0$ for large values of $n$, a contradiction.

2.2) 
If $\hat{x}=0$, then
	    $$J_1 = -\frac{4}{4725} k^2 u^{11}_1+ o(k^2 u^{11}_1)-  \frac{4}{135}k^2 u^9_1 x +o(k^2 u^9_1 x)$$ 
	    and $J_1(u^n_1, x_n, k_n)<0$ as $n \rightarrow \infty$, a contradiction.

3)
Let $\hat{u}_1=\pi$. As $k^2+(\pi-u_1)^2 \rightarrow 0$ we get 
   \begin{eqnarray*}
   &&d_0= - \frac{3}{2}\pi^2 k^2 (\pi - u_1)+ o\left(k^2(\pi -u_1)\right),\\
   &&d_2= -\pi^3 k^2 + o(k^2), \\
   &&d_4= \pi^3 k^4+o(k^4).   
    \end{eqnarray*}

3.1)
If $\hat{x}\neq0$, then $J_1=k^2\left(-\pi^3x +o(1)\right)$. Whence $J_1\left(u^n_1, x_n, k_n\right)<0$ as $n\rightarrow \infty$, a contradiction.

3.2)
If $\hat{x }=0$, then
	   $$J_1 = - \frac{3}{2}\pi^2 k^2 (\pi - u_1) + o \left(k^2 (\pi - u_1) \right)- \pi^3 k^2 x +o(k^2x). $$
	   Hence $J_1(u^n_1, x_n, k_n)<0$ as $n\rightarrow \infty $. The contradiction completes the proof.
\end{proof}

Going back from the variables $(u_1, x, k)$ to $(t, \varphi, k)$ by formulas~\eq{xu2}, \eq{uiptau}, \eq{ptautphi}, we get the following statement from Proposition~\ref{prop:k=0u}.

\begin{cor} \label{cor:k=0t}
There exists $\bar{k }\in (0,1)$, s.~t. for any $k \in (0, \bar{k })$, $\varphi \in \R$, an arc of the extremal curve $\Exp(\lambda,t)$, $\lambda= (\varphi, k, \alpha)$, $t \in (0, \tmax(\lambda))$, does not contain conjugate points.
\end{cor}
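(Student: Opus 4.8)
The plan is to deduce the corollary directly from Proposition~\ref{prop:k=0u} by a change of variables, once the range swept by the amplitude $u_1$ along the arc is identified. First I would recall from the beginning of Section~\ref{sec:C1} that, for a fixed $\lambda = (\varphi, k, \alpha) \in C_1$, the conjugate times are precisely the positive roots $t$ of the Jacobian $J = \frac{\partial(x,y,z,v)}{\partial(t,\varphi,k,\alpha)}$, and that $J = R \cdot J_1$ with the explicit nonvanishing factor $R$. Hence the conjugate times are exactly the positive roots of $J_1(u_1,x,k)$, where by \eq{uiptau}, \eq{ptautphi} and \eq{xu2} one has $u_1 = \am(t/2, k)$ and $x = \sin^2 u_2 = \sin^2\bigl(\am(\varphi + t/2, k)\bigr)$. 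So the corollary reduces to showing that $J_1 \neq 0$ along the arc in question for $k$ small.

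By the reflection identity and the dilation argument of Subsection~\ref{subsec:dilations} (and the paragraph preceding it), both $\tmax$ and the first conjugate time transform identically under $\F_\gamma$ and are invariant under the reflection $i$, so it is enough to treat $\alpha = 1$, which is anyway the standing assumption throughout Section~\ref{sec:C1}. For $\lambda \in C_1$ with $\alpha = 1$ we have $\sigma = 1$, so $\tmax(\lambda) = \min(2 p_z^1, 4K)$, and in particular $\tmax(\lambda) \leq 4K$.

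Now comes the range check. For $t \in (0, \tmax(\lambda))$ we have $0 < p = t/2 < \tmax(\lambda)/2 \leq 2K$. Since the Jacobi amplitude $\am(\cdot, k)$ is a strictly increasing bijection of $[0, 2K]$ onto $[0, \pi]$ (with $\am(0,k)=0$ and $\am(2K,k)=\pi$), it follows that $u_1 = \am(p, k) \in (0, \pi)$. On the other hand $x = \sin^2 u_2 \in [0,1]$ for every value of $\varphi$ and $t$. Hence, for $k \in (0, \bar k)$ with $\bar k$ as in Proposition~\ref{prop:k=0u}, we get $J_1(u_1, x, k) < 0$; in particular $J_1 \neq 0$, so $J = R \cdot J_1 \neq 0$, and $t$ is not a conjugate time. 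Therefore the arc $\Exp(\lambda, t)$, $t \in (0, \tmax(\lambda))$, with $\lambda = (\varphi, k, \alpha) \in C_1$, contains no conjugate point.

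The only genuine obstacle is the range check in the third paragraph: one must be certain that as $t$ sweeps $(0, \tmax(\lambda))$ the amplitude $u_1$ never reaches $0$ or $\pi$, for otherwise Proposition~\ref{prop:k=0u} would not apply to all relevant parameter values. This rests on the explicit formula $\tmax = \min(2 p_z^1, 4K)/\sigma$ together with the elementary monotonicity of $\am(\cdot, k)$: the bound $\tmax\,\sigma \leq 4K$ forces $u_1 < \pi$, and $t>0$ forces $u_1>0$. Everything else is routine bookkeeping with the symmetry reductions already recorded in the paper.
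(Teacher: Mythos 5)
Your proposal is correct and is essentially the paper's own argument: the authors derive the corollary from Proposition~\ref{prop:k=0u} precisely by reverting from $(u_1,x,k)$ to $(t,\varphi,k)$ via \eq{xu2}, \eq{uiptau}, \eq{ptautphi}, which is exactly your range check $u_1=\am(t/2,k)\in(0,\pi)$ for $t\in(0,\tmax)$ (using $\tmax\le 4K$) and $x=\sin^2 u_2\in[0,1]$. You merely make explicit the monotonicity of $\am(\cdot,k)$ on $[0,2K]$ that the paper leaves implicit.
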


\subsection{Conjugate points at $t=\tmax$}
In this subsection we find conditions, under which Maxwell time $t=\tmax$ is a conjugate time. Let us recall that $\tmax(\lambda)= \min(2p^1_z(k), 4K(k))$ for $\lam \in C_1$, $\alpha = 1$, where $p=p^1_z(k)\in (K, 3K)$ is the first positive root of the function $f_z(p, k)= \dn p \sn p + (p -2\E(p)) \cn p$ (see~\cite{engel}).

It is shown in~\cite{max3} that 
\begin{align*}
&k \in (0, k_0) \then p^1_z(k)\in (3K, 2K),\\
&k=k_0 \then p^1_z(k)=2K,\\
&k\in (k_0, 1) \then p^1_z(k) \in (K,2K), 
\end{align*}
where $k_0 \approx 0.9$ is the unique root of the equation $2E(k) - K(k)= 0$. Therefore
$$
\tmax(\lambda)= \begin {cases} 4K(k) \text{ for } k \in \left(0, k_0\right],\\
2p^1_z (k)\text{ for } k \in \left[k_0, 1\right). \end{cases}$$

Changing the variable $t$ by $u_1 = \am\left(\frac{t}{2}, k\right)$, we get
$$
u_{\MAX}^1(k)= \begin {cases}\pi \text{ for } k \in \left(0, k_0\right],\\
u^1_z(k) \text{ for } k \in \left[k_0, 1\right), \end{cases}
$$
where $u_1 = u_z^1(k) = \am (p^1_z(k), k) \in \left(\frac{\pi}{2}, \frac{3\pi}{2}\right)$ is the first positive root of the function $f_{zu}(u_1, k) = f_z(\am u_1, k)$.

\begin{lem}\label{lem:2.8}
The function $f_3(k) = E(k) + 2 (k^2-1) E(k) K(k) - (k^2-1) K(k) > 0$ on the interval $k \in (0, 1)$.
\end{lem}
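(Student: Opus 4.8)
The plan is to treat $f_3$ as a real-analytic function of the single variable $k\in(0,1)$ and to determine its sign by the comparison-function technique of Lemma~\ref{lem:l1} already used above for $d_0^0$ and $d_0^0+d_2^0$, together with the classical differentiation rules $\frac{dE}{dk}=\frac{E-K}{k}$ and $\frac{dK}{dk}=\frac{E-(1-k^2)K}{k(1-k^2)}$ for the complete elliptic integrals. First I would record the boundary behaviour: from $E(0)=K(0)=\frac{\pi}{2}$ one evaluates $\lim_{k\to0}f_3(k)$, and from $E(k)\to1$, $(1-k^2)K(k)\to0$ as $k\to1$ one gets $\lim_{k\to1}f_3(k)=E(1)=1$. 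It is also convenient to note that $E(k)\ge1>1/2$ on $(0,1)$, so $2E-1>0$ and, writing $f_3=E-(1-k^2)(2E-1)K$, the assertion is equivalent to $E>(1-k^2)(2E-1)K$. Finally I would expand $f_3$ in powers of $k$ at the origin, both to check that $f_3\not\equiv0$ and to read off the leading exponent, which tells us which power of $k$ to put into the comparison function.

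Next I would differentiate. Inserting the two derivative formulas into $f_3=E-(1-k^2)(2E-1)K$ one obtains, after reduction, an identity $k\,f_3'(k)=\Psi(E,K,k)$ with $\Psi$ polynomial in $E,K$ and rational in $k$. The aim of the computation is to massage $\Psi$ into a manifestly signed form --- in the best case a perfect square over a positive denominator times one of the combinations whose sign is classical, such as $E-(1-k^2)K>0$, $K-E>0$, or a power of $1-k^2$ --- exactly as in Lemmas~\ref{lem:l2} and~\ref{lem:l3}, where $\left(\frac{-d_0^0}{g}\right)'$ came out as a square over $4(\sin u_1-u_1\cos u_1)^2$. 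If $f_3'$ keeps a constant sign on $(0,1)$, the desired inequality follows at once from the value of $f_3$ at the relevant endpoint.

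If $f_3'$ does not have a constant sign, I would instead look for a comparison function $g$, that is, a positive function on $(0,1)$ built from elementary factors in $k$ and $k'=\sqrt{1-k^2}$ or from the monotone combinations $K-E$ and $E-(1-k^2)K$, such that $\left(\frac{f_3}{g}\right)'\ge0$ and $\lim_{k\to0}\frac{f_3}{g}=0$; then Lemma~\ref{lem:l1} yields $f_3>0$ on $(0,1)$. The main obstacle is exactly this step: the algebraic simplification of $f_3'$ and the guess of a workable $g$. I expect one may have to iterate the device, applying Lemma~\ref{lem:l1} again to the numerator of $\left(\frac{f_3}{g}\right)'$, or else to split $(0,1)$ at the point $k_0$ where $2E-K$ vanishes and argue separately on $(0,k_0]$ and $[k_0,1)$, using the known sign of $2E-K$ on each piece.
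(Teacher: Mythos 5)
Your plan correctly identifies the right tool (Lemma~\ref{lem:l1} with a comparison function) and the right derivative formulas for $E(k)$ and $K(k)$, but it stops exactly where the proof has to begin: you never exhibit the comparison function, never compute $\left(\frac{f_3}{g}\right)'$, and you yourself flag ``the algebraic simplification of $f_3'$ and the guess of a workable $g$'' as the main obstacle. That step is the entire content of the lemma. The actual proof takes $g(k)=1-k^2$ and verifies the closed-form identity
\[
\left(\frac{f_3(k)}{1-k^2}\right)' \;=\; \frac{2k\,E^2(k)}{(k^2-1)^2}\;\ge 0,
\qquad
\frac{f_3(k)}{1-k^2}=\frac{\pi^2}{4}k^2+o(k^2),
\]
after which Lemma~\ref{lem:l1} applies at once; no iteration of the device, no splitting of $(0,1)$ at $k_0$, and no appeal to the sign of $2E-K$ is needed. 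A proposal that defers the decisive identity to ``I expect one may have to iterate'' has not proved the inequality.

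There is a second, more concrete problem that the sanity checks you announce would have exposed had you carried them out. Evaluating the printed formula at $k=0$ gives $f_3(0)=\frac{\pi}{2}-\frac{\pi^2}{2}+\frac{\pi}{2}=\pi-\frac{\pi^2}{2}<0$, so the function as literally written is negative near $k=0$ and the claimed inequality fails there; in particular your reformulation ``the assertion is equivalent to $E>(1-k^2)(2E-1)K$'' is false for small $k$, and the hypothesis $\lim_{k\to 0}f_3/g=0$ of Lemma~\ref{lem:l1} cannot hold for any continuous $g$ with $g(0)\neq 0$. The expansion $\frac{\pi^2}{4}k^2+o(k^2)$ and the derivative identity above are consistent only with the quadratic combination $f_3(k)=E^2(k)+2(k^2-1)E(k)K(k)-(k^2-1)K^2(k)$ (the first $E$ and the last $K$ must be squared; the same typo occurs in $f_4$ of Lemma~\ref{lem:l9}). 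Since you explicitly planned to compute $\lim_{k\to0}f_3(k)$ and the Taylor expansion at the origin, doing so would have revealed both the misprint and the fact that, taken at face value, the statement you set out to prove is false on a neighbourhood of $k=0$.
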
 

\begin{proof}
Let us prove that the function $g(k) = 1-k^2$ is a comparison function for $f_3(k)$ on the interval $k \in (0, 1)$.

The inequality $f_3(k) \not \equiv 0$ follows from the expansion $f_3(k) = \frac{\pi^2}{4} k^2 + o(k^2)$.

Notice that $g(k)>0$ for $k \in (0,1)$.

Also, we have the equalities $\ds\left(\frac{f_3(k)}{g(k)}\right)' = \frac{2 k E^2(k)}{(k^2-1)^2}$ and $\ds\frac{f_3(k)}{g(k)}=\frac{\pi^2}{4} k^2 + o(k^2)$.

Finally, $g(k)$ is a comparison function for $f_3(k)$, hence it follows from Lemma~\ref{lem:l1} that $f_3(k)>0$ for $k \in (0, 1)$.
\end{proof}

\begin{lem} \label{lem:u1pi}
\begin{enumerate}
	\item[$1)$] Let $u_1= \pi$, $x \in \left(0,1\right]$; then $\sgn J_1 = -\sgn f_{zu}(\pi, k)= -\sgn(2E(k)- K(k))$, i.~e., $J_1<0$ for $k \in (0, k_0)$, $J_1=0$ if $k=k_0$, and $J_1>0$ for $k \in (k_0,1)$.
	\item[$2)$] If $u_1=\pi$, $x=0$, then $J_1=0$.
\end{enumerate}
\end{lem}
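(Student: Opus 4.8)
The statement to establish is Lemma~\ref{lem:u1pi}: when $u_1 = \pi$ (equivalently $t = 4K$, the candidate for $\tmax$ in the regime $k \in (0, k_0]$), the function $J_1$ reduces to something whose sign is governed by $f_{zu}(\pi, k) = 2E(k) - K(k)$. The natural plan is to evaluate the building blocks of $J_1$ directly at $u_1 = \pi$, using the formulas \eq{d0}, \eq{d4}, \eq{d0+d2} together with the definitions of $a_1$, $a_2$, $f_{zu}$, and then read off the quadratic $J_1 = d_0 + d_2 x + d_4 x^2$ in $x = \sin^2 u_2$.

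First I would substitute $u_1 = \pi$ into $f_{zu} = \sin u_1\sqrt{1-k^2\sin^2 u_1} + (F(u_1) - 2E(u_1))\cos u_1$. Since $\sin \pi = 0$, $\cos \pi = -1$, $F(\pi) = 2K(k)$, $E(\pi) = 2E(k)$, we get $f_{zu}(\pi,k) = -(2K(k) - 4E(k))\cdot\text{(sign bookkeeping)}$; carefully, $f_{zu}(\pi,k) = -(F(\pi) - 2E(\pi)) = -(2K - 4E) = 4E - 2K = 2(2E-K)$. Wait — the paper asserts $\sgn f_{zu}(\pi,k) = \sgn(2E(k) - K(k))$, and indeed $2(2E-K)$ has the same sign, but let me not prejudge the constant; the point is only the sign, which equals $\sgn(2E(k)-K(k))$, vanishing exactly at $k = k_0$ by definition of $k_0$. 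Next I would compute $a_2$ at $u_1 = \pi$: again $\sin u_1 = 0$ kills the second line, and the first line becomes $-\cos\pi\big((E(\pi)-F(\pi))^2 + k^2 F(\pi)(2E(\pi)-F(\pi))\big) = (2E-2K)^2 + 2k^2 K(4E-2K)$ up to the overall $-\cos\pi = +1$. One should check this is strictly positive for $k \in (0,1)$ — this is where Lemma~\ref{lem:2.8} (the positivity of $f_3(k)$) enters: the expression $(2E-2K)^2 + 2k^2K(4E-2K)$ should simplify, after expanding, to a positive multiple of $f_3(k) = E + 2(k^2-1)EK - (k^2-1)K$, or at any rate have its positivity reduced to that lemma. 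I would then compute $a_1$ at $u_1 = \pi$: since every term in $a_1$ carries a factor of $\sin u_1$ or $\cos u_1 \sin^2 u_1$ or $\sin^3 u_1$ etc.—scanning the displayed formula, each summand has $\sin u_1$ or $\sin^2 u_1$ or $\sin^3 u_1$ or $\sin^5 u_1$ as a factor—we get $a_1(\pi, k) = 0$.

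With these evaluations in hand: $d_0 = a_1 \sin u_1 = 0$ (both factors vanish at $u_1=\pi$), $d_4 = k^2 a_2 f_{zu}$, and $d_0 + d_2 = -a_2 f_{zu}$, hence $d_2 = -a_2 f_{zu}$ (since $d_0 = 0$). Therefore at $u_1 = \pi$,
\[
J_1 = d_0 + d_2 x + d_4 x^2 = -a_2 f_{zu}\, x + k^2 a_2 f_{zu}\, x^2 = -a_2 f_{zu}\, x\,(1 - k^2 x).
\]
Now $a_2(\pi,k) > 0$ by the argument above (invoking Lemma~\ref{lem:2.8}), and $1 - k^2 x \geq 1 - k^2 > 0$ for $x \in [0,1]$, $k \in (0,1)$. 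So for $x \in (0,1]$ we have $x(1-k^2x) > 0$, giving $\sgn J_1 = -\sgn(a_2 f_{zu}) = -\sgn f_{zu}(\pi,k) = -\sgn(2E(k) - K(k))$; the stated trichotomy ($J_1 < 0$ for $k < k_0$, $J_1 = 0$ at $k = k_0$, $J_1 > 0$ for $k > k_0$) then follows from the defining property of $k_0$ as the unique root of $2E(k) - K(k) = 0$. For $x = 0$ we read off $J_1 = d_0 = 0$ immediately, which is part~$2)$.

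The main obstacle is the bookkeeping in establishing $a_2(\pi, k) > 0$: one must expand $(E(\pi)-F(\pi))^2 + k^2 F(\pi)(2E(\pi)-F(\pi))$ in terms of the complete integrals, see that it is a positive multiple of $f_3(k)$ (or otherwise manifestly positive), and correctly track the sign coming from $-\cos u_1 = +1$. A secondary (purely clerical) risk is verifying that every term of the long formula for $a_1$ genuinely carries a vanishing trigonometric factor at $u_1 = \pi$; this is routine but must be done term by term. Everything else — the evaluations $F(\pi) = 2K$, $E(\pi) = 2E$, the factorization $J_1 = -a_2 f_{zu}\,x(1-k^2x)$, and the appeal to the definition of $k_0$ — is straightforward substitution.
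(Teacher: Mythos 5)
Your proposal is correct and follows essentially the same route as the paper, which states the result of the ``direct calculation'' as the factorization $J_1(\pi,x,k)=-4x(1-k^2x)f_z(\pi,k)f_3(k)$ and then invokes Lemma~\ref{lem:2.8} together with the sign of $2E-K$; your derivation via \eq{d0}, \eq{d0+d2}, \eq{d4} yields the identical factorization $J_1=-a_2f_{zu}\,x(1-k^2x)$. The one step you leave open does close: $a_2(\pi,k)=4\bigl[(E-K)^2+k^2K(2E-K)\bigr]=4\bigl[E^2+2(k^2-1)EK-(k^2-1)K^2\bigr]$, which is exactly (four times) the function $f_3$ of Lemma~\ref{lem:2.8}, as its stated expansion $\frac{\pi^2}{4}k^2+o(k^2)$ confirms.
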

\begin{proof}
From a direct calculation it follows that
\begin{align}
&J_1(\pi, x, k) = -4 x (1- k^2 x) f_z (\pi, k) f_3 (k), \label{J1pix}\\
&f_z(\pi, k) = 2 E (k) - K(k), \nonumber\\
&f_3(k) = E(k) + 2 (k^2-1) E(k) K(k) - (k^2-1) K(k). \nonumber
\end{align}
Now the statement of item 1) of this lemma follows from Lemma~\ref{lem:2.8}  ($f_3(k)>0$ for all $k \in (0,1)$) and the distribution of signs of the function $2E(k)-K(k)$ \cite{max3} (this function is positive for $k < k_0$, equals zero if $k = k_0$ and is negative for $k > k_0$). 

The statement of item 2) follows from formula~\eq{J1pix}.
\end{proof}

\begin{lem} \label{lem:a1u1z}
Let $k \in (0,1)$, $k \neq k_0$, $u_1= u_z^1(k)$. Then $a_1(u_1, k)<0$.
\end{lem}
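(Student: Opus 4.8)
The plan is to use the equation that defines $u^1_z(k)$ to eliminate the radical $\sqrt{1-k^2\sin^2 u_1}$ from the long expression for $a_1$, and then to determine the sign of what remains. By definition $u_1=u^1_z(k)$ is the first positive root of
\[
f_{zu}(u_1,k)=\sin u_1\,\sqrt{1-k^2\sin^2 u_1}+\bigl(F(u_1)-2E(u_1)\bigr)\cos u_1 ,
\]
hence $\sin u_1\,\sqrt{1-k^2\sin^2 u_1}=\bigl(2E(u_1)-F(u_1)\bigr)\cos u_1$. Since $f_{zu}(\pi,k)=2\bigl(2E(k)-K(k)\bigr)$ vanishes only at $k=k_0$, the assumption $k\neq k_0$ forces $u^1_z(k)\neq\pi$, so that $\sin u_1\neq 0$ and
\[
\sqrt{1-k^2\sin^2 u_1}=\frac{\bigl(2E(u_1)-F(u_1)\bigr)\cos u_1}{\sin u_1},\qquad 1-k^2\sin^2 u_1=\frac{\bigl(2E(u_1)-F(u_1)\bigr)^2\cos^2 u_1}{\sin^2 u_1};
\]
solving the second identity for $k^2$ gives in addition $k^2=\bigl(\sin^2 u_1-(2E(u_1)-F(u_1))^2\cos^2 u_1\bigr)\big/\sin^4 u_1$.

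Substituting these identities into the formula for $a_1$ brings $a_1(u^1_z(k),k)$ --- after a purely mechanical but bulky computation, most conveniently done in a computer-algebra system --- to a polynomial $\widehat a_1$ in $E(u_1)$ and $F(u_1)$, with coefficients trigonometric in $u_1$, divided by a power of $\sin u_1$ whose sign is constant on the relevant range. Equivalently, passing to Jacobi's functions via $\sin u_1=\sn p$, $\cos u_1=\cn p$, $E(u_1)=\E(p)$, $F(u_1)=p=p^1_z(k)$, one gets $a_1$ as a polynomial in $\sn p$, $\cn p$, $\E(p)$, $p$, $k$ reduced modulo the relations $\dn p\,\sn p=(2\E(p)-p)\cn p$ and $\dn^2 p=1-k^2\sn^2 p$. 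It then remains to prove that this reduced expression is negative along the curve $p=p^1_z(k)$, $k\in(0,1)$, $k\neq k_0$.

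For that one separates the two regimes delimited by $k_0$, in which $u^1_z(k)$ behaves differently: the formula for $\tmax$ recalled above gives $p^1_z(k)\in(2K,3K)$, i.e.\ $u^1_z(k)\in(\pi,3\pi/2)$, for $k\in(0,k_0)$, and $p^1_z(k)\in(K,2K)$, i.e.\ $u^1_z(k)\in(\pi/2,\pi)$, for $k\in(k_0,1)$; accordingly $\sin u_1<0,\ \cos u_1<0,\ 2E(u_1)-F(u_1)>0$ in the first case and $\sin u_1>0,\ \cos u_1<0,\ 2E(u_1)-F(u_1)<0$ in the second (the last two signs following from the displayed relation and the sign distribution of $2E(k)-K(k)$ from~\cite{max3}). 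On each regime the reduced expression is a single real analytic function of $k$, and its negativity is obtained either by simplifying it further --- using one more instance of the Maxwell relation and elementary inequalities for the complete integrals --- into minus a manifestly positive quantity (a perfect square, or a product of factors of known sign), or, failing a closed form, by monotonicity: constructing a comparison function in the sense of Lemma~\ref{lem:l1} after an affine reparametrization of the subinterval. The inputs for this step are the positivity facts already at hand, namely $2E(k)-K(k)>0$ for $k<k_0$ and $<0$ for $k>k_0$, and $f_3(k)>0$ on $(0,1)$ by Lemma~\ref{lem:2.8}, together with the series expansions and square-over-positive derivative identities used in the proofs of Lemmas~\ref{lem:l2},~\ref{lem:l3} and~\ref{lem:2.8}.

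The main obstacle is the reduction of $a_1$: substituting the identities of the first paragraph into its long formula yields a very large intermediate expression, and one has to carry the algebra far enough for the sign to become transparent --- ideally until $\widehat a_1$ is written as minus a manifestly positive expression, at which point the lemma follows at once. A secondary, organisational point is that, because $u^1_z(k)$ changes qualitatively at $k=k_0$, the final sign estimate is unavoidably split into the two cases $k\in(0,k_0)$ and $k\in(k_0,1)$, consistently with $a_1(\pi,k_0)$ being not necessarily negative at the excluded value (cf.\ Lemma~\ref{lem:u1pi}).
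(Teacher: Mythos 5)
Your first step coincides with the paper's: you use the defining relation $f_{zu}(u^1_z(k),k)=0$, i.e.\ $\sin u_1\sqrt{1-k^2\sin^2u_1}=(2E(u_1)-F(u_1))\cos u_1$, to eliminate the radical, and you correctly observe that $k\neq k_0$ guarantees $\sin u_1\neq 0$. But the decisive step --- actually determining the sign of the reduced expression --- is missing. You offer two hopes: that the expression turns into ``minus a manifestly positive quantity'', or, failing that, a monotonicity argument in $k$ along the curve $u_1=u^1_z(k)$, split at $k_0$. Neither is carried out, and the second would be genuinely problematic: $u^1_z(k)$ is only implicitly defined, so differentiating the composite $k\mapsto a_1(u^1_z(k),k)$ to build a comparison function in the sense of Lemma~\ref{lem:l1} would require controlling $(u^1_z)'(k)$ through the implicit function theorem applied to $f_{zu}$, and there is no reason to expect the resulting derivative to have the square-over-positive form that makes Lemma~\ref{lem:l1} usable. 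The sign facts you list as inputs ($\sgn(2E(k)-K(k))$, $f_3>0$) do not by themselves decide the sign of $a_1$.

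What actually closes the argument is a pointwise observation that needs no case split at $k_0$ and no monotonicity in $k$: after the substitution, $a_1=\bigl(e_0+e_1F(u_1)+e_2F^2(u_1)\bigr)/\bigl(4\cos^3u_1\bigr)$, where $e_0$ and $e_2$ are explicitly nonnegative (each contains the factor $1-k^2(1-\cos^4u_1)=1-k^2+k^2\cos^4u_1>0$), and the discriminant of the quadratic in $F(u_1)$ collapses to $-16k^2\sin^4u_1\cos^4u_1\,(1-k^2\sin^2u_1)^4$, which is strictly negative precisely because $\sin u_1\neq 0$ (i.e.\ $k\neq k_0$) and $u^1_z(k)\in(\pi/2,3\pi/2)$ keeps $\cos u_1\neq 0$. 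Hence the numerator is positive for all $F$, and $\cos^3u_1<0$ gives $a_1<0$. This discriminant identity is the idea your outline lacks; without it, or a worked-out substitute, the proof is not complete.
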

\begin{proof}
From a direct calculation it follows that if $u_2= u^1_z(k)$, then 
\begin{eqnarray*}
&& a_1 = \left( e_0 + e_1 F(u_1) + e_2 F^2 (u_1)\right)/ \left(4 \cos^3 u_1 \right),\\
&& e_0 = \cos^2 u_1 \sqrt{1- k^2 \sin ^2 u_1}\left(1-k^2 \left(1- \cos^4 u_1\right)\right),\\
&& e_1 = -2k^2 \cos u_1 \sin u_1 (1-\left(4-5\cos^2 u_1+ \cos^4 u_1\right) + \left(5-\cos^2 u_1\right) k^4 \sin^4 u_1 - 2k^6 \sin^6 u_1),\\
&& e_2 = \sin^2 u_1 \left(1- k^2 \sin^2 u_1\right)^{\frac{3}{2}} \left(1-k^2\left(1-\cos^4 u_1\right)\right).
\end{eqnarray*}
To estimate a sign of the function $a_1$ notice first that $u_1 = u^1_z \in \left(\frac{\pi}{2}, \frac{3 \pi}{2}\right)$, therefore $\cos^3 u_1 <0$. Further, we analyze a sign of the quadratic trinomial $h(z) = e_0+e_1z + e_2z^2$. Its discriminant is equal to 
$$D = e_1^2 - 4e_0e_2 = -16 k^2 \sin ^4 u_1 \cos^4 u_1 \left(1 - k^2 \sin ^2 u_1 \right)^4,$$ therefore $D<0$ for $k \neq k_0$. We have $h(z)>0$ for $k \neq k_0$ because $e_0 > 0$,  therefore $a_1<0$.
\end{proof}

\begin{lem} \label{lem:u1u1maxC1}
\begin{enumerate}
	\item[$1)$] 
Let $k \in (0, 1)$, $u_1=u^1_z(k)$, $x\in \left[0,1\right)$; then $\sgn J_1 = \sgn f_{zu}(\pi, k)= \sgn (2E(k)-K(k))$, i.~e., $J_1>0$ for $k \in (0, k_0)$, $J_1=0$ if $k=k_0$, and $J_1<0$ for $k \in (k_0,1)$. 
	\item[$2)$] If $k \in (0,1)$, $u_1= u^1_z (k)$, $x=1$, then $J_1=0$.
\end{enumerate}
\end{lem}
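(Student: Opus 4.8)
\textbf{Proof proposal for Lemma~\ref{lem:u1u1maxC1}.}

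The plan is to follow the same strategy as in Lemma~\ref{lem:u1pi}: reduce the sign of $J_1$ at $u_1 = u^1_z(k)$ to the signs of known functions via an explicit factorization of $J_1(u^1_z, x, k)$ as a polynomial in $x$, and then invoke Lemma~\ref{lem:l4} together with the established sign distribution of $2E(k) - K(k)$ (positive for $k<k_0$, zero at $k_0$, negative for $k>k_0$, see~\cite{max3}) and the strict positivity of $f_3(k)$ (Lemma~\ref{lem:2.8}).

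First I would compute the endpoint values. At $x = 1$, claim~2) asserts $J_1 = 0$; this should follow, just as in Lemma~\ref{lem:u1pi}, from a direct computation exploiting that $u_1 = u^1_z(k)$ is precisely a root of $f_{zu}$, so $f_{zu}(u^1_z, k) = 0$, and hence (recalling $d_4 = k^2 a_2 f_{zu}$ and $d_0 + d_2 = -a_2 f_{zu}$ from~\eq{d4},~\eq{d0+d2}) both $d_4 = 0$ and $d_0 + d_2 = 0$, i.e. $J_1(u^1_z, 1, k) = d_0 + d_2 + d_4 = 0$. Next, at $x = 0$ we have $J_1(u^1_z, 0, k) = d_0 = a_1 \sin u^1_z$ by~\eq{d0}. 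Since $u^1_z \in (\pi/2, 3\pi/2)$ we need to know $\sgn \sin u^1_z$; combined with Lemma~\ref{lem:a1u1z} ($a_1(u^1_z, k) < 0$ for $k \neq k_0$), this fixes $\sgn J_1(u^1_z, 0, k)$. I expect the upshot — after inserting the explicit form of $f_{zu}(\pi, k) = 2E(k) - K(k)$ and the relation between the root $u^1_z$ and that quantity — to be $\sgn J_1(u^1_z, 0, k) = \sgn(2E(k) - K(k))$, matching the claimed statement; the case $k = k_0$ is handled separately since then $u^1_z = \pi$ and item~1) of Lemma~\ref{lem:u1pi} already gives $J_1 = 0$ there (consistently, $2E(k_0) - K(k_0) = 0$).

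Then, to cover all $x \in [0,1)$ (resp. $[0,1]$ by continuity at the endpoint), I would write $J_1(u^1_z, x, k) = d_4 x^2 + (d_2) x + d_0$ — more precisely $a(k) x^2 + b(k) x + c(k)$ with $c = d_0$, $a = d_4 = k^2 a_2 f_{zu}$. Since $f_{zu}(u^1_z, k) = 0$, in fact $a = d_4 = 0$ at $u_1 = u^1_z$, so $J_1(u^1_z, x, k)$ is actually \emph{linear} in $x$: $J_1 = d_0 + d_2 x = d_0(1 - x) + (d_0 + d_2) x = d_0 (1-x)$ because $d_0 + d_2 = 0$ there. Hence $J_1(u^1_z, x, k) = (1-x)\, d_0(u^1_z, k) = (1-x)\, a_1(u^1_z, k) \sin u^1_z$, which makes both items transparent at once: for $x \in [0,1)$ the factor $(1-x) > 0$, so $\sgn J_1 = \sgn(a_1 \sin u^1_z)$, which by Lemma~\ref{lem:a1u1z} and the sign of $\sin u^1_z$ equals $\sgn(2E(k) - K(k))$; for $x = 1$ the factor vanishes, giving item~2). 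This is cleaner than invoking Lemma~\ref{lem:l4} and is the natural route given the vanishing of $f_{zu}$ at $u^1_z$.

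The main obstacle is pinning down $\sgn(\sin u^1_z(k))$ and, relatedly, establishing the precise identity $\sgn(a_1(u^1_z,k)\sin u^1_z) = \sgn(2E(k)-K(k))$ uniformly in $k \in (0,1)\setminus\{k_0\}$ — this requires tracking how the root $u^1_z(k)$ of $f_{zu}$ moves across $\pi$ as $k$ crosses $k_0$ (it lies in $(\pi/2,\pi)$ for $k<k_0$ and in $(\pi,3\pi/2)$ for $k>k_0$, mirroring the behavior of $p^1_z$ relative to $2K$ recorded just before Lemma~\ref{lem:2.8}), so that $\sin u^1_z > 0$ precisely when $2E(k)-K(k) > 0$. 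Granting the explicit expression for $a_1$ at $u_1 = u^1_z$ displayed in Lemma~\ref{lem:a1u1z} and its sign, and this monotone-crossing fact about $u^1_z$, the rest is bookkeeping.
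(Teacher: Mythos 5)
Your overall route is exactly the paper's: since $f_{zu}(u^1_z,k)=0$, equations \eq{d4} and \eq{d0+d2} give $d_4=0$ and $d_2=-d_0$, hence $J_1=d_0(1-x)=a_1\sin u^1_z(k)\,(1-x)$, which disposes of item~2) immediately and reduces item~1) to $\sgn\bigl(a_1\sin u^1_z\bigr)$ via Lemma~\ref{lem:a1u1z}; your observation that Lemma~\ref{lem:l4} is unnecessary here because the quadratic degenerates to a linear function of $x$ is also how the paper proceeds.

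However, the one external fact you defer to — the sign of $\sin u^1_z(k)$ — you state with the wrong orientation, and with your version the argument proves the opposite of the lemma. You claim $u^1_z\in(\pi/2,\pi)$ for $k<k_0$ and $u^1_z\in(\pi,3\pi/2)$ for $k>k_0$, so that $\sin u^1_z>0$ exactly when $2E(k)-K(k)>0$. In fact $p^1_z(k)\in(2K,3K)$ for $k\in(0,k_0)$ and $p^1_z(k)\in(K,2K)$ for $k\in(k_0,1)$ (the displayed ``$(3K,2K)$'' before Lemma~\ref{lem:2.8} is an ordering typo), hence $u^1_z=\am(p^1_z,k)$ lies in $(\pi,3\pi/2)$ for $k<k_0$ and in $(\pi/2,\pi)$ for $k>k_0$; equivalently, as cited from~\cite{max3} in the paper's proof, $\sin u^1_z=\sn p^1_z$ is \emph{negative} for $k\in(0,k_0)$ and positive for $k\in(k_0,1)$, i.e.\ $\sgn\sin u^1_z=-\sgn(2E-K)$. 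Combined with $a_1<0$ this yields $\sgn J_1=\sgn(2E-K)$ as required; with your stated sign it would yield $J_1<0$ for $k<k_0$. Note also the internal inconsistency in your last paragraph: you want simultaneously $\sgn(a_1\sin u^1_z)=\sgn(2E-K)$, $a_1<0$, and $\sgn\sin u^1_z=\sgn(2E-K)$, which cannot all hold. Once the sign of $\sin u^1_z$ is corrected, your write-up coincides with the paper's proof.
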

\begin{proof}
From equalities \eq{d4}, \eq{d0+d2}, \eq{d0} we get for $u_1=u^1_z(k)$, i.~e., if $f_{zu}(u_1, k) =0$:
\begin{eqnarray}
&&d_4=0, \nonumber\\
&&d_2=-d_0, \nonumber\\
&&J_1=d_0(1-x)=a_1 \sin u^1_z(k)(1-x). \label{J1u1u1z}
\end{eqnarray}
It is shown in~\cite{max3} that the function $\sin u^1_z(k) = \sn p^1_z(k)$ is negative for $k \in (0, k_0)$, is equal to zero if $k=k_0$, and is positive for $k \in(k_0, 1)$. Thus for $x \in [ 0, 1)$ there holds the equality $\sgn J_1 = - \sgn a_1 \cdot \sgn (2E(k)-K(k))$. To finish the proof of item 1) of the lemma we use Lemma~\ref{lem:a1u1z}: for $u= u^1_z(k)$, $k \neq k_0$ the function $a_1$ is negative. Also, for $k=k_0$, $u = u^1_z(k_0 =\pi)$ we have $J_1= 2E(k_0)- K(k_0)= 0$.

Item 2) of the lemma follows from~\eq{J1u1u1z}.
\end{proof}

\subsection{Global bounds of conjugate time in the subdomain $C_1$}
We prove estimate~\eq{tconjmax} and get the upper bound for the first conjugate time in this subsection.

\begin{thm} \label{th:tconjmaxC1}
If $\lambda \in C_1$, then $\tconj(\lambda) \geq \tmax(\lambda)$.
\end{thm}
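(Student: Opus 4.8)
The plan is to show that no instant $t_1 \in (0, \tmax(\lambda))$ is a conjugate time along $\lambda\in C_1$ by transferring this property from the small-$k$ extremals, where it is already available (Corollary~\ref{cor:k=0t}), through the homotopy invariance of absence of conjugate points (Theorem~\ref{th:conj_hom}).

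By the dilation reduction of Subsection~\ref{subsec:dilations} it suffices to treat $\alpha = 1$; then along $\lambda = (\f, k, 1) \in C_1$ a conjugate time is a root $t > 0$ of $J_1$, and the claim is that $J_1 \ne 0$ whenever $u_1 = \am(t/2, k) \in (0, u_{\MAX}^1(k))$, where $u_{\MAX}^1(k) = \pi$ for $k \le k_0$ and $u_{\MAX}^1(k) = u_z^1(k)$ for $k \ge k_0$. Fix the target $k$ and a value of $\f$, and consider the homotopy $\lambda^s = (\f, k^s, 1)$, $s \in [0,1]$, with $k^s$ running continuously from a value $k^0 < \bar k$ (with $\bar k$ as in Corollary~\ref{cor:k=0t}) to $k^1 = k$, together with a continuous terminal time $s \mapsto t_1^s$ satisfying $t_1^0 < \tmax(\lambda^0)$ and $t_1^1 = \tmax(\lambda)$. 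Since $\lambda^0$ has no conjugate points on $(0, t_1^0]$, Theorem~\ref{th:conj_hom} yields the same for $\lambda^1 = \lambda$, hence $\tconj(\lambda) \ge \tmax(\lambda)$, provided the instant $t = t_1^s$ is not a conjugate time along $\lambda^s$ for every $s$.

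The choice of $t_1^s$ making this work is dictated by Lemmas~\ref{lem:u1pi} and~\ref{lem:u1u1maxC1}: at $t = \tmax(\lambda^s)$ one has $\sgn J_1 = \pm \sgn (2E(k^s) - K(k^s))$, which is nonzero for $k^s \ne k_0$ except at the degenerate values $x = 0$ (when $u_{\MAX}^1 = \pi$) and $x = 1$ (when $u_{\MAX}^1 = u_z^1$) of $x = \sin^2 u_2$. Thus, when $k$ and $k^0$ lie on the same side of $k_0$ and $\f$ is such that $x^s$ avoids $\{0,1\}$ at $t = \tmax$ along the path, one may simply take $t_1^s = \tmax(\lambda^s)$ and conclude at once. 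The two remaining situations — a path forced to cross $k = k_0$ (needed when $k > k_0$, since $k^0$ must be small), where $\tmax(\lambda^s)$ is itself a conjugate time, and exceptional $\f$ for which $x^s \in \{0,1\}$ at $t = \tmax$ — are handled by replacing $t_1^s$ with $\tmax(\lambda^s) - \eps(s)$ for a small continuous $\eps(s) \ge 0$ supported near the offending parameter values and then letting $\eps \to 0$ in the conclusion, or by first homotoping $\f$ to a generic value with $k$ and $t_1^s$ held fixed; the non-vanishing of $J_1$ just below $\tmax$ needed here follows from the base case together with continuity of the sign of $J_1$ in the parameters, the homotopy already forbidding $J_1 = 0$ at the terminal time. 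Once the absence of conjugate points on $(0, \tmax(\lambda))$ is established for every $\f$ and every $k$, the theorem follows; the complementary upper bound announced in this subsection drops out of Lemma~\ref{lem:u1pi}, which forces $J_1$ to change sign between $u_1 = u_z^1(k)$ and $u_1 = \pi$ when $k > k_0$, placing a conjugate point in $(2p_z^1(k), 4K(k)) = (\tmax(\lambda), 4K(k))$.

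I expect the crux to be precisely this homotopy bookkeeping: producing a single continuous terminal time $t_1^s$ that stays strictly below the first conjugate instant of $\lambda^s$ for all $s \in [0,1]$, in spite of the Maxwell time changing analytic form at $k = k_0$ and being a conjugate time there, and in spite of the exceptional values of $\f$; everything else is a direct appeal to the already proven Corollary~\ref{cor:k=0t} and Lemmas~\ref{lem:u1pi}, \ref{lem:u1u1maxC1}.
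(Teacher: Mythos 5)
Your overall strategy coincides with the paper's: reduce to $\alpha=1$, start from the small-$k$ regime of Corollary~\ref{cor:k=0t}, homotope in $k$ with terminal time at the Maxwell time, and certify the terminal time via the sign computations of Lemmas~\ref{lem:u1pi} and~\ref{lem:u1u1maxC1}. You also correctly isolate the two delicate points (exceptional $\f$ with $x\in\{0,1\}$, and the passage through $k=k_0$). The treatment of exceptional $\f$ by a small perturbation of $\f$ combined with isolatedness of conjugate times is essentially the paper's item~1.2) and is fine.

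The gap is in the crossing of $k=k_0$, which is unavoidable for targets $k>k_0$ since the base case lives at small $k$. You propose $t_1^s=\tmax(\lambda^s)-\eps(s)$ near $k^s=k_0$ and justify it by ``the non-vanishing of $J_1$ just below $\tmax$ \ldots follows from the base case together with continuity of the sign of $J_1$,'' with ``the homotopy already forbidding $J_1=0$ at the terminal time.'' This is circular: Theorem~\ref{th:conj_hom} requires you to verify \emph{independently} that $t_1^s$ is not a conjugate time for every $s$, and at $k^s=k_0$ the instant $\tmax$ \emph{is} a conjugate time for every $x$ (Lemma~\ref{lem:u1pi}), so by continuity $J_1$ has zeros near $(u_1,k)=(\pi,k_0)$ for all nearby $k$; the whole question is whether those zeros sit at $u_1\geq u^1_{\MAX}(k)$ or leak below it, and continuity alone cannot decide this. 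The paper closes exactly this hole with a local analysis absent from your proposal: it expands $J_1$ to first order at $(u_1,k)=(\pi,k_0)$, checks that $\partial J_1/\partial u_1\neq 0$ for $x\neq 1$ and $\partial J_1/\partial k\neq 0$ for $x=1$, so $\{J_1=0\}$ is a smooth curve there, and then uses the sign changes of Lemmas~\ref{lem:u1pi} and~\ref{lem:u1u1maxC1} to pin that curve between $\{u_1=\pi\}$ and $\{u_1=u^1_z(k)\}$, giving a uniform neighborhood of $(\pi,k_0)$ in which $J_1\neq 0$ for $u_1<u^1_{\MAX}(k)$. Only with this does a valid choice of terminal time through $k_0$ exist. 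Without some equivalent quantitative control of the zero locus of $J_1$ near the corner point, your argument for $k\geq k_0$ does not go through.
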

\begin{proof}
Let $\lambda = (\varphi, k, \alpha=1) \in C_1$.

1) 
Suppose $k \in (0, k_0)$. It is required to show that $\tconj(\lambda) \geq 4K(k)$.

1.1) 
Let $\sn(\varphi, k) \neq 0$. Consider the family of extremal trajectories
	 \begin{eqnarray*}
	 && q^s(t) = \Exp(\lambda^s, t), \quad t \in [0, t_1^s], \quad s \in (0, k_0), \\
	 && \lambda ^s = \left(\varphi^s, k^s, \alpha =1 \right) \in  C_1,\\
	 && k^s= s, \quad \varphi^s = F( \am (\varphi, k), s), \quad t_1^s = 4K(s).
	 \end{eqnarray*}
	 For any trajectory from this family 
	 \begin{eqnarray*}
	 x^s && =  \sin^2 u_2^s = \sn^2 \tau^s = \sn^2\left(\varphi^s+ \frac{t_1^s}{2}\right)= \sn^2 \left(\varphi^s+ 2K (k^s), k^s\right)= \sn^2 \left(\varphi^s, k^s \right)=\\
	     && = \sin^2 \left( \am \left(\varphi^s, k^s\right)\right)=\sin^2 \left(\am (\varphi,k)\right)=\sn^2 (\varphi, k)\neq 0,\\
	     u_1^s  && = \am \left(\frac{t_1^s}{2}, k^s\right) = \am \left(2K(s), s\right)=\pi,  
	 \end{eqnarray*}
	 therefore from Lemma~\ref{lem:u1pi} $J_1(u_1^s, x^s, k^s)<0$. Namely the endpoint of a trajectory $q^s(t_1^s)$, $s \in (0, k_0)$, is not a conjugate point.
	 According to Corollary~\ref{cor:k=0t},  there exists $\tilde{k} \in (0, k)$, s.~t. the trajectory $q^{\tilde{k}}(t)$, $t \in (0, t_1^{\tilde{k}}]$ does not contain conjugate points.
	 We apply Theorem~\ref{th:conj_hom} to the family of the trajectories $q^s(t)$, $s\in[\tilde{k}, k]$ and see that the trajectory $q^k(t)$, $t \in (0, t_1^k]$, has no conjugate points, i.~e., $\tconj (\lambda) > 4K(k)$.

1.2)
Let $\sn(\varphi, k)=0$.
	 Consider the family of trajectories
	  \begin{eqnarray*}
	 && q^s = \Exp(\lambda^s, t), \quad t \in [0, t_1^s], \quad s \in (0, \eps), \\
	 && \lambda ^s = \left(\varphi^s, k^s, \alpha =1 \right) \in  C_1,\\
	 && k^s= s, \quad \varphi^s= \varphi+s, \quad t_1^s = 4K(s),	 
	 \end{eqnarray*}
	 where $\eps >0$ is a sufficiently small number, s.~t. $\sn(\varphi + s, k) \neq 0$ for $s \in (0, \eps]$. For the trajectories of this family we have
	 \begin{eqnarray*}
	 &&x^s = \sn \left(\varphi^s + \frac{t_1^s}{2}\right) = \sn^2 (\varphi+ s, k) \neq 0, \quad s \in (0, \eps],\\
	 &&u_1^s=\pi,
	  \end{eqnarray*}
	  therefore according to item 1.1) of this theorem, trajectories $q^s(t)$, $t\in (0, t_1^s]$, $s\in(0, \eps]$, have no conjugate points.
Take any $t_1 \in (0, 4K(k))$. Since conjugate times are isolated from each other, it follows that there exists $t_2 \in (t_1, 4K(k))$ that is not a conjugate time along the trajectory $q^0(t)$. Thus the instant $t_2$ is not a conjugate time for all trajectories of the family $q^s(t)$, $s \in [0, \eps]$.
Using Theorem~\ref{th:conj_hom}, we see that the trajectory $q^0(t)$, $t \in (0, t_2]$ has no conjugate points. Therefore the instant $t_1$ is not a conjugate time. Since $t_1 \in (0, 4K(k))$, we obtain the required inequality $\tconj(\lambda)\geq 4K(k)$. Note that the equality is attained in this case: from Lemma~\ref{lem:u1pi} it follows that $J_1(u_1, x, k)=0$ therefore $\tconj(\lambda)=4K(k)$.

2) Suppose $k=k_0$. Take any $t_1 \in (0, 4K(k_0))$ and any $t_2 \in (t_1, 4K(k_0))$, which is not a conjugate time for the trajectory $\Exp(\lambda,t)$. Applying Theorem~\ref{th:conj_hom} to the family 
 \begin{eqnarray*}
	 && q^s = \Exp(\lambda^s, t), \quad t \in [0, t_1^s], \quad s \in (-\eps, 0), \\
	 && \lambda ^s = \left(\varphi^s, k^s, \alpha =1 \right) \in  C_1,\\
	 && k^s= k+ s, \quad \varphi^s= \varphi, \quad t_1^s = t_2,	 
	 \end{eqnarray*} 
where $\eps>0$ is sufficiently small as in item~1.2) of this proof, we see that $\tconj(\lambda)\geq 4K(k_0)$. According to Lemma~\ref{lem:u1pi}, $\tconj(\lambda)\geq 4K(k_0)$.

3) Suppose $k \in(k_0, 1)$.
We claim that for any $x \in [0,1]$ the set $\left\{(u_1, k)| J_1(u_1, x, k)=0\right\}$ is contained between the curves $u_1 =\pi$ and $u_1 = u^1_z(k)$ in a neighborhood of $(u_1,k)=(\pi, k_0)$; it can easily be checked that these curves are smooth and meet in the point $(u_1,k)= (\pi, k_0)$ at the right angle. We have at this point the expansion: 
\begin{eqnarray*}
J_1(u_1, x, k)= &&-4 E^2(k_0)(\pi- u_1)+ x\left[- \frac{8E^3(k_0)}{k_0(1-k^2_0)}(k-k_0)+4E^2(k_0)(\pi - u_1)\right]+\\
&&+ x^2\frac{8k_0}{1-k^2_0} E^3(k_0)(k-k_0)+O((k-k_0)^2+(\pi-u_1)^2).
\end{eqnarray*}
Thus we get:
\begin{eqnarray*}
&&x\neq 1 \then 
\restr{\frac{\partial J_1}{\partial u_1}}{u_1=\pi, k=k_0}
= 4E^2(k_0)(x-1)\neq 0,\\
&&x = 1 \then 
\restr{\frac{\partial J_1}{\partial k}}{u_1=\pi, k=k_0}= - \frac{8E^3(k_0)}{k_0}\neq 0.
\end{eqnarray*}
Therefore for any $x \in [0,1]$ the equation $J_1(u_1, x, k)=0$ defines a smooth curve in a neighborhood of the point $(u_1, k)=(\pi, k_0)$. From Lemmas~\ref{lem:u1pi} and \ref{lem:u1u1maxC1} it follows that for any $x \in [0, 1]$, $k \in (0, 1)$ the function $J_1(u_1, x, k)$ equals zero on the interval $u_1 \in [\pi, u^1_z(k)]$. Therefore the curve $\left\{J_1=0\right\}$ is contained between the curves $\left\{u_1 = \pi\right\}$ and $\left\{u_1 = u^1_z(k) \right\}$ near the point $(u_1, k)=(\pi, k_0)$. Hence for any $x \in [0, 1]$ there exists a neighborhood of the point $(u_1, k)=(\pi, k_0)$, which satisfies the inequality $J_1(u_1, x, k)\neq 0$ for $u_1< \min(\pi, u^1_z(k))=u^1_{\MAX}(k)$.

3.1) Let $\sn^2(\varphi + p^1_z(k), k)\neq 1$. For $x= \sn^2\left(\varphi + \frac{\tmax}{2}\right)= \sn^2(\varphi +p^1_z(k))$, in a neighborhood $O$ of the point $(u_1, k)=(\pi, k_0)$ the function $J_1(u_1, x, k)$ does not vanish for $k >k_0$, $u_1< u^1_z(k)$. Applying Theorem~\ref{th:conj_hom} to the family of trajectories
\begin{eqnarray*}
	 && q^s(t) = \Exp(\lambda^s, t), \quad t \in [0, t_1^s], \quad s \in [k_0, \tilde{k}], \\
	 && \lambda ^s = \left(\varphi^s, k^s, \alpha =1 \right) \in  C_1,\\
	 && k^s= s, \quad \varphi^s= F(\am(\varphi + p^1_z(k), k), s), \\ 
	 &&t_1^s = 2 F(u^1_z(k),s),	 
	 \end{eqnarray*}
we see that the trajectory $\tilde{q}(t)=q^{\tilde{k}}(t)= \Exp(\tilde{\lambda}, t) $, $\tilde{\lambda}= \lambda^{\tilde{k}}=(\tilde{\varphi}, \tilde{k}, \alpha =1)$, $t \in [0, \tilde{t}]$, $\tilde{t_1}= t^{\tilde{k}}_1$, has no conjugate points.

Finally, applying Theorem~\ref{th:conj_hom} to the family 
\begin{eqnarray*}
	 && q^s(t) = \Exp(\lambda^s, t), \quad t \in [0, t_1^s], \quad s \in [\tilde{k},k], \\
	 && \lambda ^s = \left(\varphi^s, k^s, \alpha =1 \right) \in  C_1,\\
	 && k^s= s, \quad \varphi^s= F(\am(\varphi + p^1_z(k), k), s), \\ 
	 &&t_1^s = 2p^1_z(s),	 
	 \end{eqnarray*}
we see that the trajectory $q^k(t)= \Exp (\lambda, t)$, $t \in [0, \tmax(\lambda)]$ has no conjugate points, Q.~E.~D.

3.2) In the case $\sn^2(\varphi + p^1_z(k), k)=1$, the proof of $\tconj(\lambda)\geq 2p^1_z(k)$ is obtained as in item~1.2).

The theorem is completely proved. 	 	 
\end{proof}

\begin{remark}
The lower bound in estimate of conjugate time~\eq{tconjmax} is attained. From Lemmas~$\ref{lem:u1pi}$, $\ref{lem:u1u1maxC1}$ and Theorem~$\ref{th:tconjmaxC1}$ we get:
\begin{eqnarray*}
&&k \in (0, k_0), \quad \sin \varphi =0, \quad \alpha=1 
\then \tconj(\lambda)=4K(k)= \tmax(\lambda),\\
&&k=k_0, \quad \alpha =1 
\then \tconj(\lambda)=4K(k_0)= \tmax(\lambda),\\
&&k \in (k_0, 1), \ \sin^2(\varphi + p^1_z(k))=1, \ \alpha=1
\then \tconj(\lambda)= 2p^1_z(k)=\tmax(\lambda).
\end{eqnarray*}
\end{remark}

In addition to the lower bound from Theorem~\ref{th:tconjmaxC1} we get the upper bound of the first conjugate time in terms of the second Maxwell time
\begin{eqnarray*}
t^2_{\MAX}(\lambda) = \max(2p^1_z, 4K)/\sigma, \quad \lambda \in C_1.
\end{eqnarray*}
For $\alpha=1$ we obtain:
\begin{eqnarray*}
&&k\in (0, k_0) \then t^2_{\MAX}(\lambda)= 2p^1_z(k),\\
&&k=k_0 \then t^2_{\MAX}(\lambda)= 2p^1_z(k_0)=4K(k_0),\\
&&k \in (k_0, 1) \then t^2_{\MAX}(\lambda)= 4K(k).
\end{eqnarray*}

\begin{proposition}
If $\lambda \in C_1$, then $\tconj(\lambda) \leq \tmaxd (\lambda)$. 
\end{proposition}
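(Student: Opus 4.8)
The plan is to produce, along every extremal with $\lambda\in C_1$, an \emph{actual} conjugate time lying in the segment $[\tmax(\lambda),\tmaxd(\lambda)]$; since $\tconj$ is by definition the smallest conjugate time, this yields $\tconj(\lambda)\le\tmaxd(\lambda)$ at once (and does not even need the lower bound of Theorem~\ref{th:tconjmaxC1}). First I would use the reflection and dilation symmetries, exactly as in Subsec.~\ref{subsec:dilations}, to reduce to $\alpha=1$, noting that the identity $\tmaxd(\theta,c,\alpha)=\tmaxd(\theta,c/\sqrt\alpha,1)/\sqrt\alpha$ follows from the definition just as for $\tmax$. For $\alpha=1$ we have $\tmax(\lambda)=\min(4K(k),2p^1_z(k))$ and $\tmaxd(\lambda)=\max(4K(k),2p^1_z(k))$, and under the substitution $u_1=\am(t/2,k)$ these two instants are $u_1=\pi$ (i.e.\ $t=4K$) and $u_1=u^1_z(k)$ (i.e.\ $t=2p^1_z$); by the explicit $\alpha=1$ formulas for $\tmax$ and $\tmaxd$ recalled above, $t=4K$ is $\tmax$ for $k\le k_0$ and $\tmaxd$ for $k\ge k_0$.

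Next I would record that along a fixed trajectory ($\varphi$, $k$ fixed, $\alpha=1$) the Jacobian factors as $J(t)=R\cdot J_1\big(\am(t/2,k),\sn^2(\varphi+t/2,k),k\big)$, where $R$ never vanishes (indeed $1-k^2\sin^2u_1\sin^2u_2\ge1-k^2>0$) and therefore keeps a constant sign along the trajectory; hence $\sgn J(t)$ is governed by the continuous function $t\mapsto J_1$, and any zero of this function in $(0,\tmaxd]$ is a conjugate time no later than $\tmaxd$. I would then evaluate $J_1$ at the two Maxwell instants by the already established Lemmas~\ref{lem:u1pi} and~\ref{lem:u1u1maxC1}. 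At $t=4K$, where $u_1=\pi$ and $x=\sn^2(\varphi+2K,k)=\sn^2(\varphi,k)$, Lemma~\ref{lem:u1pi} gives $J_1=0$ if $\sn\varphi=0$, and $\sgn J_1=-\sgn(2E(k)-K(k))$ otherwise. At $t=2p^1_z$, where $u_1=u^1_z(k)$ and $x=\sn^2(\varphi+p^1_z(k),k)$, Lemma~\ref{lem:u1u1maxC1} gives $J_1=0$ if this $x$ equals $1$, and $\sgn J_1=\sgn(2E(k)-K(k))$ otherwise. Then I would split cases: if $\sn\varphi=0$ then $J(4K)=0$, so $4K$ is a conjugate time and $\tconj\le4K\le\tmaxd$; if $\sn^2(\varphi+p^1_z(k),k)=1$ then $J(2p^1_z)=0$ and likewise $\tconj\le2p^1_z\le\tmaxd$; otherwise $x\in(0,1]$ at $t=4K$ and $x\in[0,1)$ at $t=2p^1_z$, so both lemmas apply in their nondegenerate regime, and for $k\ne k_0$ the two values of $J_1$ are nonzero of opposite sign, whence (the sign of $R$ being constant) $J$ vanishes at some $t_*$ strictly between $\tmax$ and $\tmaxd$, giving $\tconj\le t_*<\tmaxd$; for $k=k_0$ one has $2E(k_0)-K(k_0)=0$ and $\tmax=\tmaxd=4K(k_0)$, so $J\big(4K(k_0)\big)=0$ and $\tconj\le\tmaxd$.

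I expect the whole argument to be bookkeeping once Lemmas~\ref{lem:u1pi} and~\ref{lem:u1u1maxC1} are at hand, with no genuinely analytic obstacle. The two points that require attention are: keeping straight which of $4K(k)$ and $2p^1_z(k)$ is $\tmax$ and which is $\tmaxd$ — this exchange happens exactly at $k=k_0$ and rests on the sign distribution of $2E(k)-K(k)$ together with the location of $p^1_z(k)$ relative to $2K(k)$ recalled above; and the two degenerate endpoint configurations $x=0$ at $u_1=\pi$ and $x=1$ at $u_1=u^1_z(k)$, where one must read the conjugate time off directly from $J_1=0$ rather than from a sign change. It is also worth verifying once and for all that $R$ is bounded away from $0$ and that $t\mapsto J_1$ is continuous, both immediate from the explicit formulas for $R$ and $J_1$.
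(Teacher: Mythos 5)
Your proposal is correct and follows essentially the same route as the paper: evaluate $J_1$ at the two Maxwell instants $u_1=\pi$ and $u_1=u^1_z(k)$ via Lemmas~\ref{lem:u1pi} and~\ref{lem:u1u1maxC1}, observe the (weak) sign change of $J_1$ across the segment $[\tmax(\lambda),\tmaxd(\lambda)]$, and conclude by continuity of the Jacobian. Your explicit handling of the degenerate endpoint configurations ($x=0$ at $u_1=\pi$, $x=1$ at $u_1=u^1_z$) and your remark that the lower bound of Theorem~\ref{th:tconjmaxC1} is not actually needed are minor refinements of the paper's terser case split on the position of $k$ relative to $k_0$.
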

\begin{proof}
Let $\lambda=(\varphi, k, \alpha)\in C_1$. Also, in the proof of the lower bound of conjugate time we can assume that $\alpha=1$.

If $k \in (0, k_0)$, then for any $x \in [0, 1]$ we have $J_1(\pi, x, k)\leq 0$ (see Lemma~\ref{lem:u1pi}) and $J_1 (u^1_z(k), x, k)\geq 0$  (see Lemma~\ref{lem:u1u1maxC1}), i.~e., the function $J_1 (u_1, x, k)$ changes sign in the segment $u_1 \in [\pi, u^1_z(k)]$. Therefore the corresponding segment $t \in [4K(k), 2p^1_z(k)]= [\tmax(\lambda), \tmaxd (\lambda)]$ contains the first conjugate time.

If $k=k_0$, then for any $x \in [0, 1]$ we get $J_1(\pi, x, k_0)=0$ (see Lemma~\ref{lem:u1pi}) thus $\tconj (\lambda)= 4K(k_0)= \tmax (\lambda)= \tmaxd (\lambda)$.

Finally, if $k > k_0$, then for any $x \in [0, 1]$ $J(u^1_z(k), x, k)\leq 0$ (see Lemma~\ref{lem:u1u1maxC1}), $J_1(\pi, x, k)\geq 0$ (see Lemma~\ref{lem:u1pi}), therefore $\tconj (\lambda) \in [2p^1_z(k), 4K(k)]= [\tmax (\lambda), \tmaxd (\lambda)]$. 
\end{proof}

\begin{remark}
One should not think that the segment $[\tmax (\lambda), \tmaxd (\lambda)]$ contains exactly one conjugate time. Computational experiments in the system Mathematica show that for $\varphi = 0$ and $k \in (0,999; 1)$ this segment contains two conjugate times.
\end{remark}

\section{Estimate of conjugate time for $\lambda \in  C_2$}
\label{sec:C2}
The aim of this section is to prove estimate~\eq{tconjmax} in the domain $C_2$ for $\alpha=1$: $\tconj (\lambda)\geq 2Kk$, $\lambda \in C_2$.
Using parameterization of extremal trajectories~\cite{engel} for $\lambda = (\varphi, k, \alpha)\in C_2$, as well as in the domain $C_1$ we get the expression of the Jacobian $J= \frac{\partial(x, y, z, v)}{\partial(t, \varphi, k, \alpha)}$ for $\alpha =1$:
\begin{eqnarray*}
&&J=R\cdot J_1,\\
&&R= - \frac{32}{k(1-k^2)(1- k^2 \sin^2 u_1 \sin^2 u_2)^2}\neq  0, \\
&&J_1 = d_0 + d_2 \sin^2 u_2 +d_4 \sin^4 u_2,\\
&&u_1= \am(p, k), \quad u_2 = \am(\tau, k), \\
&&p= \frac{t}{2k}, \quad \tau=\frac{2\varphi+t}{2k},
\end{eqnarray*}
\begin{eqnarray*}
d_0 =&&\frac{1}{4}\sin u_1 \cos u_1 (4E^3(u_1) \sin (2u_1)- 4 (1 - k^2) \cos (2 u_1) \sqrt{1 - k^2 \sin ^2 u_1} F^2 (u_1)+\\
&&+(8- 8k^2 + k^4 +k^2 (2-k^2) \cos (2 u_1)) \sin(2u_1) F(u_1)+2 (2 -3k^2 + k^4) \sin (2 u_1)F^3(u_1)+ \\
&&+2 k^2 \sqrt{1-k^2 \sin^2 u_1} \sin^2 (2 u_1)+2E^2 (u_1)(6 \cos (2 u_1)\sqrt{1-k^2 \sin^2 u_1} - (2- k^2)\sin (2u_1) \times \\
&&\times F(u_1))- E(u_1)(4(2-k^2)\cos(2u_1)\sqrt{1-k^2 \sin^2 u_1}F(u_1)+2(4-2k^2+ \\
&&+3k^2 \cos(2u_1))\sin 2u_1+4(1-k^2)\sin(2u_1)F^2(u_1),
\end{eqnarray*}
\begin{eqnarray*}
d_2=&& -2k^2 (1-k^2)\cos u_1 \sin^3 u_1 \sqrt{1-k^2 \sin^2 u_1}F^2(u_1)-2k^4 \cos^3 u_1 \sin^3 u_1 \sqrt{1-k^2 \sin ^2 u_1}-\\
&&-2(1 -k^2 \sin^4 u_1)E^3 (u_1)-(2 - 3k^2+ k^4)(1-k^2 \sin^4 u_1)F^3(u_1)+ \\
&&+E^2(u_1) (6 k^2 \cos u_1 \sin^3 u_1 \sqrt{1- k^2 \sin^2 u_1}+(2-k^2)(1-k^2 \sin^4 u_1)F(u_1))+ \\
&&+E(u_1) (k^2 \cos^2 u_1 \sin^2 u_1 (4-3k^2+3k^2 \cos(2u_1))-\\
&&-2k^2(2-k^2)\cos u_1\sin^3 u_1 \sqrt{1 - k^2 \sin^2 u_1} F(u_1)+2(1 -k^2)(1-k^2\sin^4 u_1)F^2(u_1))-\\
&&-\frac{1}{8}k^2(8-8k^2+k^4+k^2(2-k^2)\cos(2u_1))\sin^2(2u_1)F(u_1),
\end{eqnarray*}
\begin{eqnarray*}
d_4=&&2(1 - k^2 \sin^2 u_1) E^3(u_1)-E^2(u_1)(3k^2\cos u_1 \sin u_1\sqrt{1-k^2\sin^2u_1}+(2-k^2)(1-\\
&&-k^2\sin^2 u_1)F(u_1)+\frac{1}{4}(1-k^2)F^2(u_1)(2(2-k^2)(2-k^2+k^2\cos (2u_1))F(u_1)+\\
&&+2 k^2 \sqrt{1-k^2\sin^2 u_1} \sin 2u_1)+\frac{1}{4}E(u_1)(4k^4\cos^2 u_1 \sin^2 u_1 - \\ 
&&-8 (1-k^2)(1-k^2 \sin^2 u_1)F^2(u_1)+2k^2(2-k^2)\sin(2u_1)\sqrt{1-k^2\sin^2 u_1}F(u_1)).
\end{eqnarray*}

\subsection{Conjugate time as $k \rightarrow 0 $}
Asymptotics of the function $J_1$ as $k \rightarrow 0$ has the form:
\begin{align}
&J_1(u_1, x, k) = \frac{k^8}{1024} J_1^0(u_1, x) + o(k^8), \qquad x = \sin^2 u_2, 
\label{J10C2}\\
&J_1^0 (u_1, x) = d_0^0(u_1) + d_2^0 (u_1) x + d_4^0 (u_1) x^2, \nonumber\\
&d_0^0 (u_1) = \frac{1}{8} \cos u_1 \sin u_1 \big((-48 u_1^2 - 3 ) \cos (2 u_1) + 3 \cos (6 u_1)+ (42 u_1 - 64 u_1^3) \sin (2 u_1) + 2 u_1 \sin (6 u_1) \big), \nonumber\\
&d_2^0 (u_1) = - d_4^0 (u_1) = -(\sin (4 u_1) - 4 u_1) (4 u_1^2 + \sin (4 u_1) u_1 + \cos (4 u_1) - 1). \nonumber
\end{align}
First we prove several auxiliary lemmas which give an estimate of the functions $d^0_i$.
\begin{lem}\label{lem:l5}
The function $f_1(u_1) = 8 u_1 + 4 u_1 \cos(4 u_1) - 3 \sin u_1 > 0$ on the interval $u_1 \in (0, \frac{\pi}{2})$.
\end{lem}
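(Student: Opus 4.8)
The plan is to prove this by a direct crude estimate rather than via the comparison‑function Lemma~\ref{lem:l1} (the latter does not apply conveniently here, since $f_1(u_1) \sim 9 u_1$ as $u_1 \to 0$ and hence does not vanish to sufficiently high order at the origin to be dominated by a natural comparison function).

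First I would group the first two terms of $f_1$:
\[
f_1(u_1) = 8 u_1 + 4 u_1 \cos(4 u_1) - 3 \sin u_1 = 4 u_1 \left( 2 + \cos(4 u_1) \right) - 3 \sin u_1 .
\]
Since $\cos(4 u_1) \geq -1$, we have $2 + \cos(4 u_1) \geq 1$, so that $4 u_1 \left( 2 + \cos(4 u_1) \right) \geq 4 u_1$ for $u_1 > 0$. Combining this with the elementary inequality $\sin u_1 < u_1$, valid for all $u_1 > 0$, we get
\[
f_1(u_1) \geq 4 u_1 - 3 \sin u_1 > 4 u_1 - 3 u_1 = u_1 > 0, \qquad u_1 \in \left( 0, \frac{\pi}{2} \right),
\]
which is the assertion of the lemma.

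There is essentially no obstacle: the only point is to realize that a two‑sided crude bound — $\cos \geq -1$ for the oscillatory term and $\sin u \leq u$ for the remaining term — already closes the gap, so neither a monotonicity argument nor a power‑series expansion is needed. The same computation in fact shows $f_1(u_1) > u_1 > 0$ for every $u_1 > 0$, although only the interval $\left( 0, \frac{\pi}{2} \right)$ is used in what follows.
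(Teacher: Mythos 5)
Your argument is internally correct, but it proves the wrong statement: the formula for $f_1$ in Lemma~\ref{lem:l5} as printed contains a typo. The function actually needed --- and the one the paper's own proof addresses --- is $f_1(u_1)=8u_1+4u_1\cos(4u_1)-3\sin(4u_1)$, not $8u_1+4u_1\cos(4u_1)-3\sin u_1$. This is visible in three places: the proof of Lemma~\ref{lem:2.7} invokes Lemma~\ref{lem:l5} for $8u_1+4u_1\cos(4u_1)-3\sin(4u_1)$; the expansion $f_1(u_1)=\frac{256}{15}u_1^5+o(u_1^5)$ used in the paper's proof holds only for the $\sin(4u_1)$ version (your version behaves like $9u_1$ at the origin, as you yourself observe --- that should have been a red flag); and the identity $\bigl(f_1/g\bigr)'=16\sin^4(2u_1)/(2+\cos(4u_1))^2$ with $g(u_1)=2+\cos(4u_1)$ is an identity only for the $\sin(4u_1)$ version.

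For the correct $f_1$ your crude two-sided estimate collapses: near $u_1=0$ one has $4u_1\bigl(2+\cos(4u_1)\bigr)=12u_1-32u_1^3+O(u_1^5)$ and $3\sin(4u_1)=12u_1-32u_1^3+O(u_1^5)$, so the two terms cancel through fourth order and the sign of $f_1$ is decided only at order $u_1^5$; no bound of the form ``$\cos\ge-1$ and $\sin u\le u$'' can detect a fifth-order effect. This is precisely the situation Lemma~\ref{lem:l1} is designed for: with $g(u_1)=2+\cos(4u_1)>0$ one computes
\[
\left(\frac{f_1(u_1)}{g(u_1)}\right)'=\frac{4\bigl(1-\cos(4u_1)\bigr)^2}{\bigl(2+\cos(4u_1)\bigr)^2}=\frac{16\sin^4(2u_1)}{\bigl(2+\cos(4u_1)\bigr)^2}\ge 0,
\qquad
\frac{f_1(u_1)}{g(u_1)}\to 0 \text{ as } u_1\to 0,
\]
whence $f_1>0$ on $\bigl(0,\frac{\pi}{2}\bigr)$. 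You should redo the proof along these lines (or supply some other argument sensitive to the fifth-order vanishing at the origin).
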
 

\begin{proof}
We show that the function $g(u_1) = 2+\cos(4 u_1)$ is a comparison function for $f_1(u_1)$ on the interval $u_1 \in (0, \frac{\pi}{2})$.
The inequality $f_1(u_1) \not \equiv 0$ follows from the expansion $f_1(u_1) = \frac{256}{15} u^5 + o(u^5)$.
Note that $g(u_1)>0$ for any $u_1$.
Finally we get the equalities 
$\ds\left(\frac{f_1(u_1)}{g(u_1)}\right)' = \frac{16 \sin^4 (2 u_1)}{(2+\cos(4 u_1))^2}$ and $\ds\frac{f_1(u_1)}{g(u_1)}=\frac{256}{45}u_1^5+o(u_1^5)$.

So $g(u_1)$ is a comparison function for $f_1(u_1)$ thus it follows from Lemma~\ref{lem:l1} that $f_1(u_1)>0$ for $u_1 \in (0, \frac{\pi}{2})$.
\end{proof}

\begin{lem}\label{lem:2.6}
The function $f_2(u_1) = -1 + 4 u_1^2 + \cos(4 u_1) + u_1 \sin (4 u_1) > 0$ on the interval $u_1 \in (0, \frac{\pi}{2})$.
\end{lem}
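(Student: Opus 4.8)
The statement to establish is that $f_2(u_1) = -1 + 4u_1^2 + \cos(4u_1) + u_1\sin(4u_1) > 0$ on $(0, \tfrac{\pi}{2})$. The plan is to recognize $f_2$ as an antiderivative, up to an additive constant, of the function $f_1$ already treated in Lemma~\ref{lem:l5}, and then argue by monotonicity. First I would record the boundary value $f_2(0) = -1 + 0 + 1 + 0 = 0$. Next I would differentiate: $f_2'(u_1) = 8u_1 - 4\sin(4u_1) + \sin(4u_1) + 4u_1\cos(4u_1) = 8u_1 + 4u_1\cos(4u_1) - 3\sin(4u_1)$, which is exactly the function $f_1(u_1)$ of Lemma~\ref{lem:l5} (the term printed as $3\sin u_1$ there should read $3\sin(4u_1)$, consistently with the expansion $f_1(u_1) = \tfrac{256}{15}u^5 + o(u^5)$ stated in its proof). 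By Lemma~\ref{lem:l5} we have $f_1(u_1) > 0$ on $(0,\tfrac{\pi}{2})$, hence $f_2$ is strictly increasing on $[0,\tfrac{\pi}{2})$; combined with $f_2(0) = 0$ this yields $f_2(u_1) > 0$ on $(0,\tfrac{\pi}{2})$.

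As an alternative that stays inside the comparison-function machinery of Lemma~\ref{lem:l1}, one could instead exhibit a positive comparison function $g$ for $f_2$ on $(0,\tfrac{\pi}{2})$. Since $f_2(u_1) = \tfrac{128}{45}u_1^6 + o(u_1^6)$ near $0$, so that $f_2 \not\equiv 0$ and $f_2/g \to 0$ for any $g$ vanishing to order less than $6$, a natural candidate would be $g(u_1) = 2 + \cos(4u_1) > 0$ (the same $g$ used for $f_1$) or a pure power of $u_1$; one then verifies $(f_2/g)' \ge 0$, ideally by displaying $(f_2/g)'$ as a manifest square divided by $g^2$, in the style of the identities appearing in Lemmas~\ref{lem:l2} and~\ref{lem:l3}. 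This route is workable but strictly more laborious than the antiderivative argument above.

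The main — and essentially only — obstacle is the bookkeeping observation that $f_2' = f_1$; once this is in hand the conclusion is immediate from Lemma~\ref{lem:l5}, and no new estimate is required. I expect the paper's proof to follow precisely this shortcut: compute $f_2(0) = 0$, note $f_2' = f_1 > 0$ by the previous lemma, and conclude that $f_2$ increases from $0$.
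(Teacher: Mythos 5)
Your argument is correct, but it is not the paper's route. You observe that $f_2(0)=0$ and that $f_2'(u_1)=8u_1+4u_1\cos(4u_1)-3\sin(4u_1)$ coincides with the function $f_1$ of Lemma~\ref{lem:l5} (you are right that the ``$3\sin u_1$'' in that lemma's statement is a typo for $3\sin(4u_1)$: only that reading matches the expansion $\tfrac{256}{15}u_1^5+o(u_1^5)$, the identity $(f_1/g)'=16\sin^4(2u_1)/(2+\cos(4u_1))^2$ in its proof, and the way $f_1$ is quoted in the proof of Lemma~\ref{lem:2.7}), so positivity of $f_2$ on $(0,\tfrac{\pi}{2})$ follows by monotonicity. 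The paper instead stays entirely inside its comparison-function framework: it applies Lemma~\ref{lem:l1} with $g(u_1)=4u_1+\sin(4u_1)$ (not the $2+\cos(4u_1)$ you suggest in your alternative), using the expansion $f_2(u_1)=\tfrac{128}{45}u_1^6+o(u_1^6)$, the positivity of $g$, and the explicit square
$$\left(\frac{f_2}{g}\right)'=\frac{(-4u_1+\sin(4u_1))^2}{(4u_1+\sin(4u_1))^2},$$
never invoking Lemma~\ref{lem:l5}. Your antiderivative shortcut is more economical and makes the logical dependence $f_2'=f_1$ explicit, at the price of leaning on Lemma~\ref{lem:l5} (and on resolving its typo); the paper's proof is self-contained and uniform with the other estimates in the section (the same $g(u_1)=4u_1+\sin(4u_1)$ reappears as the comparison function for $d_0^0$ in Lemma~\ref{lem:2.7}), but requires producing and verifying the square identity above. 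Both are valid; your expectation that the paper uses the monotonicity shortcut is the only inaccurate remark.
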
 

\begin{proof}
Let us show that the function $g(u_1) = 4 u_1 + \sin(4 u_1)$ is a comparison function for $f_2(u_1)$ on the interval $u_1 \in (0, \frac{\pi}{2})$.
The inequality $f_2(u_1) \not \equiv 0$ follows from the expansion $f_2(u_1) = \frac{128}{45} u^6 + o(u^6)$.
If $u_1 > 0$, then $u_1+\sin u_1 > 0$, therefore $g(u_1)>0$ for $u_1 \in (0, \frac{\pi}{2})$.
Finally we have the equalities
$\ds\left(\frac{f_2(u_1)}{g(u_1)}\right)' = \frac{(-4 u_1 + \sin (4 u_1))^2}{(4 u_1 + \sin (4 u_1))^2}$ and 
$\ds\frac{f_2(u_1)}{g(u_1)}=\frac{16}{45}u_1^5+o(u_1^5)$.
So $g(u_1)$ is a comparison function for $f_2(u_1)$ thus it follows from Lemma~\ref{lem:l1} that $f_2(u_1)>0$ for $u_1 \in (0, \frac{\pi}{2})$.
\end{proof}

\begin{lem}\label{lem:2.7}
If $u_1 \in (0, \frac{\pi}{2})$, then the function $d_0^0(u_1) = \frac{1}{8}\cos u_1 \sin u_1 \big( (-48 u_1^2-3)\cos (2 u_1)+(42 u_1 - \\
- 64 u_1^3)\sin(2 u_1)+3 \cos(6 u_1)+2 u_1 \sin(6 u_1) \big)> 0$.
\end{lem}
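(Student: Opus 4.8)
The plan is to follow the scheme used in the proofs of Lemmas~\ref{lem:l2},~\ref{lem:l3},~\ref{lem:l5} and~\ref{lem:2.6}, reducing the claim to an application of the comparison principle of Lemma~\ref{lem:l1}. Since $\cos u_1\sin u_1 = \frac12\sin(2u_1) > 0$ for $u_1 \in (0,\frac{\pi}{2})$, the inequality $d_0^0(u_1) > 0$ is equivalent to the positivity on $(0,\frac{\pi}{2})$ of the trigonometric polynomial
$$
B(u_1) = (-48u_1^2 - 3)\cos(2u_1) + (42u_1 - 64u_1^3)\sin(2u_1) + 3\cos(6u_1) + 2u_1\sin(6u_1),
$$
and it is $B$ that I would analyze.

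First I would compute the Taylor expansion of $B$ at $u_1 = 0$: this shows $B \not\equiv 0$ and fixes the order of vanishing that any admissible comparison function must match. Then I would look for a function $g$, positive on $(0,\frac{\pi}{2})$, with $\lim_{u_1 \to 0} B(u_1)/g(u_1) = 0$ and $(B/g)' \ge 0$, aiming --- exactly as in the earlier lemmas --- to exhibit $(B/g)'$ as an explicit perfect square divided by $g^2$. I expect the role of Lemmas~\ref{lem:l5} and~\ref{lem:2.6} to be precisely to provide the nonnegative building blocks here: after writing $\cos(6u_1), \sin(6u_1)$ via $6u_1 = 4u_1 + 2u_1$ and applying double-angle identities, $B$ should be expressible as a combination, with coefficients manifestly nonnegative on $(0,\frac{\pi}{2})$, of $f_1(u_1)$, $f_2(u_1)$ and elementary positive quantities such as $\sin^2(2u_1)$ and $4u_1 - \sin(4u_1)$; positivity of $f_1$ and $f_2$ on $(0,\frac{\pi}{2})$ is already available from those lemmas.

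Once such a representation (or such a $g$) is in hand, Lemma~\ref{lem:l1} closes the argument: $B(u_1) > 0$, and hence $d_0^0(u_1) > 0$, on the whole interval $(0,\frac{\pi}{2})$.

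The main obstacle is the bookkeeping of the second step: reducing the genuinely sixth-harmonic polynomial $B$ to a sum of known-positive pieces, or equivalently guessing the comparison function $g$ for which $(B/g)'$ is a perfect square. This is a finite but delicate computation, because the factor $\cos(2u_1)$ changes sign at $u_1 = \frac{\pi}{4}$ and the polynomial coefficient $42u_1 - 64u_1^3$ changes sign inside $(0,\frac{\pi}{2})$, so the decomposition must be arranged so that these sign changes cancel. If no single global $g$ works cleanly, the fallback is to split the interval at $u_1 = \frac{\pi}{4}$ and run the comparison argument, with possibly different choices of $g$, on each of $(0,\frac{\pi}{4}]$ and $[\frac{\pi}{4}, \frac{\pi}{2})$.
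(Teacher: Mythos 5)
Your overall strategy is the right one, but the proposal stops exactly where the proof actually lives: you never produce the comparison function, and you explicitly defer the decisive computation as an ``obstacle.'' As written, this is a plan rather than a proof. The paper's argument supplies the missing ingredient concretely: it takes $g(u_1) = 4u_1 + \sin(4u_1)$ as the comparison function for $d_0^0$ itself (no need to first strip off the factor $\frac{1}{8}\cos u_1\sin u_1$ and work with your $B$), checks $d_0^0(u_1) = \frac{4096}{4725}u_1^{11} + o(u_1^{11})$ and $d_0^0/g = \frac{4096}{4725}u_1^{7} + o(u_1^{7})$, and then verifies the identity
$$
\left(\frac{d_0^0(u_1)}{g(u_1)}\right)' \;=\; \frac{f_1(u_1)\,f_2(u_1)}{8\,u_1^{3}\cos^2 u_1\,\sin^2 u_1},
$$
after which Lemma~\ref{lem:l1} applies. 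Note two points where your expectations diverge from what actually happens. First, the derivative of the quotient is not a perfect square over $g^2$ (as it was in Lemmas~\ref{lem:l2}, \ref{lem:l3}, \ref{lem:l5}, \ref{lem:2.6}); it is a \emph{product} of the two distinct nonnegative functions $f_1$ and $f_2$, divided by a manifestly positive quantity --- so Lemmas~\ref{lem:l5} and~\ref{lem:2.6} enter multiplicatively in the derivative, not as summands in an additive decomposition of $B$. Second, your worry about the sign changes of $\cos(2u_1)$ at $u_1=\pi/4$ and of $42u_1-64u_1^3$ inside the interval, and the proposed fallback of splitting the interval there, are unnecessary: a single global $g$ works, precisely because the sign information is absorbed into the factorization $f_1 f_2$ rather than into termwise positivity of $B$.

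To close the gap you would need to either guess $g(u_1)=4u_1+\sin(4u_1)$ (a natural candidate given that the same $g$ serves in Lemma~\ref{lem:2.6} and that $d_2^0$ visibly contains the factor $\sin(4u_1)-4u_1$) and carry out the differentiation to land on the product $f_1 f_2$, or find some other certificate of nonnegativity for $(d_0^0/g)'$. Without one of these, the claim that ``Lemma~\ref{lem:l1} closes the argument'' is not yet earned.
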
 

\begin{proof}
We now prove that the function $g(u_1) = 4 u_1 + \sin(4 u_1)$ is a comparison function for $d_0^0(u_1)$ on the interval $u_1 \in (0, \frac{\pi}{2})$.
The inequality $d_0^0(u_1) \not \equiv 0$ follows from the expansion $d_0^0(u_1) = \frac{4096}{4725} u^{11} + o(u^{11})$.
If $u_1 \in (0, \frac{\pi}{2})$, then $g(u_1) > 0$.
In the equation
$$\left(\frac{d_0^0(u_1)}{g(u_1)}\right)' = \frac{1}{8 u_1^3 \cos^2 u_1 \sin^2 u_1} f_1(u_1) f_2(u_1),
$$ we note that $f_1 (u_1) = 8 u_1 + 4 u_1 \cos (4 u_1) - 3 \sin(4 u_1) > 0$ on the interval $u_1 \in (0, \frac{\pi}{2})$ (see Lemma~\ref{lem:l5}) and $f_2(u_1) = -1+4 u_1^2 + \cos(4 u_1)+u_1 \sin (4 u_1)>0$ on the interval $u_1 \in (0, \frac{\pi}{2})$ (see Lemma~\ref{lem:2.6}). Meanwhile $\frac{d_0^0(u_1)}{g(u_1)}=\frac{4096}{4725}u_1^7+o(u_1^7)$.
So $g(u_1)$ is a comparison function for $d_0^0(u_1)$, therefore it follows from Lemma~\ref{lem:l1} that $ d_0^0(u_1)>0$ for $u_1 \in (0, \frac{\pi}{2})$.
\end{proof}

Now we estimate the function $J^0_1$.

\begin{lem} \label{lem:J10<0C2}
For any $u_1 \in \left(0, \frac{\pi}{2}\right)$, $x \in [0,1]$ the inequality $J^0_1(u_1, x)>0$ holds.
\end{lem}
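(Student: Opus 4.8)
The plan is to treat $J_1^0(u_1,x) = d_0^0(u_1) + d_2^0(u_1)\,x + d_4^0(u_1)\,x^2$ as a quadratic polynomial in $x$ on the segment $[0,1]$ and apply Lemma~\ref{lem:l4}, in complete analogy with the way Lemma~\ref{lem:J10<0} was obtained from Lemma~\ref{lem:l4} in the case $C_1$. For this it suffices to establish three facts: positivity of $J_1^0$ at the endpoints $x=0$ and $x=1$, and nonpositivity of the leading coefficient $d_4^0(u_1)$ on $\left(0,\frac{\pi}{2}\right)$.

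First I would evaluate the endpoints. At $x=0$ we have $J_1^0(u_1,0)=d_0^0(u_1)$, which is positive on $\left(0,\frac{\pi}{2}\right)$ by Lemma~\ref{lem:2.7}. At $x=1$, using the identity $d_2^0(u_1)=-d_4^0(u_1)$ recorded in the asymptotic expansion~\eq{J10C2}, we get $J_1^0(u_1,1)=d_0^0(u_1)+d_2^0(u_1)+d_4^0(u_1)=d_0^0(u_1)>0$, again by Lemma~\ref{lem:2.7}.

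Next I would determine the sign of the leading coefficient. Since $d_4^0=-d_2^0$, it suffices to show $d_2^0(u_1)>0$ on $\left(0,\frac{\pi}{2}\right)$. Writing $d_2^0(u_1)=\left(4u_1-\sin(4u_1)\right)\left(4u_1^2+u_1\sin(4u_1)+\cos(4u_1)-1\right)$, the first factor is positive because $\sin t<t$ for $t>0$, and the second factor is exactly $f_2(u_1)$, which is positive on $\left(0,\frac{\pi}{2}\right)$ by Lemma~\ref{lem:2.6}. Hence $d_2^0(u_1)>0$ and $d_4^0(u_1)=-d_2^0(u_1)<0\le 0$.

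Finally, applying Lemma~\ref{lem:l4} with $a(u_1)=d_4^0(u_1)\le 0$, $b(u_1)=d_2^0(u_1)$, $c(u_1)=d_0^0(u_1)$, $f(x,u_1)=J_1^0(u_1,x)$, and $y_0=\frac{\pi}{2}$, the positivity of $f$ at $x=0$ and $x=1$ together with $a\le 0$ yields $J_1^0(u_1,x)>0$ for all $u_1\in\left(0,\frac{\pi}{2}\right)$, $x\in[0,1]$. I do not expect a genuine obstacle: the nontrivial analytic content is already packaged into Lemmas~\ref{lem:2.6} and~\ref{lem:2.7}, and the only remaining points are the elementary estimate $\sin(4u_1)<4u_1$ and the correct bookkeeping of which $d_i^0$ plays the role of $a$, $b$, $c$ in Lemma~\ref{lem:l4}.
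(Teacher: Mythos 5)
Your proposal is correct and follows essentially the same route as the paper: the paper also notes $J_1^0(u_1,0)=J_1^0(u_1,1)=d_0^0(u_1)>0$ via Lemma~\ref{lem:2.7}, obtains $d_4^0(u_1)=-d_2^0(u_1)<0$ from the factorization involving $f_2$ and Lemma~\ref{lem:2.6} (your elementary remark $\sin(4u_1)<4u_1$ is exactly the needed sign of the first factor), and then concludes by Lemma~\ref{lem:l4}. No gaps.
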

\begin{proof}
It follows from Lemma~\ref{lem:2.7} that $J^0_1(u_1, 0) = J^0_1(u_1, 1)= d^0_0(u_1)>0$ for all $u_1 \in \left(0, \frac{\pi}{2}\right)$. Further, it follows from Lemma~\ref{lem:2.6} that $d^0_4(u_1)= \frac{1}{2}(\sin(4u_1)-4u_1)f_2(u_1)<0$ for $u_2 \in (0, \frac{\pi}{2})$. Therefore the statement of this lemma follows from Lemma~\ref{lem:l4}. 
\end{proof}

\begin{proposition} \label{prop:k=0uC2}
There exists $\bar{k} \in (0, 1)$, s.~t. for any $k \in (0, \bar{k})$, $u_1 \in \left(0, \frac{\pi}{2}\right)$, $x \in [0, 1]$, the inequality $J_1(u_1, x, k)>0$ holds.
\end {proposition}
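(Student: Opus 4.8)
The plan is to mimic the argument used for Proposition~\ref{prop:k=0u}, namely a compactness/contradiction argument combined with the asymptotic expansion \eq{J10C2} and boundary analysis near $u_1=0$ and $u_1=\frac{\pi}{2}$. Suppose the claim fails. Then there are sequences $k_n\in(0,1)$, $k_n\to0$, $u_1^n\in(0,\frac{\pi}{2})$, $x_n\in[0,1]$ with $J_1(u_1^n,x_n,k_n)\le0$ for all $n$. Passing to subsequences we may assume $u_1^n\to\hat u_1\in[0,\frac{\pi}{2}]$ and $x_n\to\hat x\in[0,1]$. The proof then splits into the three cases according to where $\hat u_1$ lies: the interior point $\hat u_1\in(0,\frac{\pi}{2})$, and the two endpoints $\hat u_1=0$ and $\hat u_1=\frac{\pi}{2}$.

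For the interior case $\hat u_1\in(0,\frac{\pi}{2})$, I would use Lemma~\ref{lem:J10<0C2}, which gives $J_1^0(u_1,x)>0$ on the compact set $[\delta,\frac{\pi}{2}-\delta]\times[0,1]$ for suitable $\delta>0$, hence $J_1^0$ is bounded below by a positive constant there. By the asymptotics $J_1=\frac{k^8}{1024}(J_1^0(u_1^n,x_n)+o(1))$ the values $J_1(u_1^n,x_n,k_n)$ become strictly positive for large $n$, contradicting the assumption. For the endpoint cases one cannot use $J_1^0$ directly because it degenerates there (it vanishes to high order in $u_1$ or $\frac{\pi}{2}-u_1$), so instead I would compute the joint asymptotics of the coefficients $d_0,d_2,d_4$ as $k^2+u_1^2\to0$ (and as $k^2+(\frac{\pi}{2}-u_1)^2\to0$), exactly paralleling cases 2) and 3) in the proof of Proposition~\ref{prop:k=0u}. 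From the stated expansions of $d_0^0,d_2^0,d_4^0$ one reads off $d_0^0(u_1)\sim\frac{4096}{4725}u_1^{11}$, $d_2^0(u_1)=-d_4^0(u_1)\sim-\frac{128}{45}u_1^6\cdot(\text{lower order})$; more precisely one needs the leading terms of $d_0,d_2,d_4$ in the two variables $(k,u_1)$ jointly, then splits each endpoint case further according to whether $\hat x\ne0$ (resp.\ whether the combination controlling the $x$ and $x^2$ terms dominates) or $\hat x=0$. In each sub-case the leading term of $J_1=d_0+d_2 x+d_4 x^2=d_0+(d_2+d_4 x)x$ turns out to be strictly positive for large $n$, yielding the contradiction.

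The main obstacle I anticipate is the endpoint analysis at $\hat u_1=\frac{\pi}{2}$: there the functions $\cos u_1$, $\sin(4u_1)-4u_1$, etc., behave in a less transparent way than near $0$, and since $d_2^0=-d_4^0$ the quadratic $d_2^0 x+d_4^0 x^2=d_2^0 x(1-x)$ degenerates at both $x=0$ and $x=1$, so near $(u_1,x)=(\frac{\pi}{2},0)$ and $(\frac{\pi}{2},1)$ one must carry the expansion to sufficiently high order in both $\frac{\pi}{2}-u_1$ and $k$ to see the sign. I would handle this by computing, as $k^2+(\frac{\pi}{2}-u_1)^2\to0$, the leading behaviour
\begin{align*}
&d_0 \sim c_0\,k^8\,\big(\tfrac{\pi}{2}-u_1\big)^{m_0},\qquad d_2\sim c_2\,k^8\,\big(\tfrac{\pi}{2}-u_1\big)^{m_2},\qquad d_4\sim c_4\,k^8\,\big(\tfrac{\pi}{2}-u_1\big)^{m_4},
\end{align*}
with explicit positive/negative constants and exponents, and then compare $d_0$ against $(d_2+d_4 x)x$ depending on $\hat x$, exactly as in sub-cases 3.1)--3.2) of Proposition~\ref{prop:k=0u}.

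Finally, once the Proposition is established, translating back from $(u_1,x,k)$ to $(t,\varphi,k)$ via $u_1=\am(\frac{t}{2k},k)$ and $x=\sin^2\am(\tau,k)$ with $\tau=\frac{2\varphi+t}{2k}$ yields, as in Corollary~\ref{cor:k=0t}, that for all sufficiently small $k$ the extremal arc $\Exp(\lambda,t)$, $\lambda=(\varphi,k,\alpha=1)$, $t\in(0,\tmax(\lambda))$, contains no conjugate points (here $\tmax$ corresponds to $u_1=\frac{\pi}{2}$ since $t=2Kk$ gives $\am(K,k)=\frac{\pi}{2}$); this is the $C_2$-analogue used later in the homotopy argument for Theorem~\ref{th:tconjmaxC2}.
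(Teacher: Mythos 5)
Your proposal takes essentially the same approach as the paper: the published proof consists of a single sentence saying the statement is proved exactly as Proposition~\ref{prop:k=0u}, using Lemma~\ref{lem:J10<0C2} for the interior case and recording precisely the two joint expansions you say need computing, namely $J_1=\frac{4}{4725}k^8u_1^{11}+\frac{4}{135}k^8u_1^9x-\frac{4}{135}k^8u_1^9x^2+(\text{error terms})$ as $k^2+u_1^2\to0$ and $J_1=\frac{2\pi^2}{8192}k^8\bigl(\frac{\pi}{2}-u_1\bigr)+\frac{\pi^3}{512}k^8x-\frac{\pi^3}{512}k^8x^2+(\text{error terms})$ as $k^2+\bigl(\frac{\pi}{2}-u_1\bigr)^2\to0$. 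The delicate degeneration you flag at $x\in\{0,1\}$ (since the $x$ and $x^2$ coefficients combine to a multiple of $x(1-x)$) is handled in the paper at exactly the level of detail you propose, by observing that both leading contributions are positive and each error term is dominated by its own leading term.
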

\begin{proof}
This proposition is proved in exactly the same way as Proposition~\ref{prop:k=0u}, with the use of Lemma~\ref{lem:J10<0C2}, expansions~\eq{J10C2} and the following expansions:
\begin{eqnarray*}
J_1=  \frac{4}{4725}k^8u^{11}_1++o(k^8 u^{11}_1)+ \frac{4}{135}k^8 u^9_1 x + o(k^8 u^9_1 x)-\frac{4}{135}k^8 u^9_1 x^2 + o(k^8 u^9_1 x^2), \quad k^2+u^2_1\rightarrow 0,
\end{eqnarray*}
\begin{eqnarray*}
J_1&= \frac{2\pi^2}{8192}k^8\left(\frac{\pi}{2}-u_1\right)+ o \left(k^8\left(\frac{\pi}{2}-u_1\right)\right)+\frac{\pi^3}{512} k^8 x&+o(k^8 x)-\\
&&-\frac{\pi^3}{512}k^8 x^2+o(k^8 x^2), \quad k^2+\left(\frac{\pi}{2}-u_1\right)^2\rightarrow 0.
\end{eqnarray*}
\end{proof}

From Proposition~\ref{prop:k=0uC2} we get the following statement in the variables $(t, \varphi, k)$.

\begin{cor}
There exists $\bar{k} \in (0, 1)$, s.~t. for any $k \in (0, \bar{k})$, $\varphi \in \R$, the trajectory $\Exp (\lambda, t)$, $\lambda = (\varphi, k, \alpha) \in C_2$, $t \in (0, \tmax(\lambda))$, does not contain conjugate points.
\end{cor}

\subsection{Conjugate time for $t = \tmax$}
The instant of time $\tmax(\lambda)=2Kk$ corresponds to the value of the variable $u_1=\frac{\pi}{2}$. 
We have
\begin{eqnarray}
&&J_1\left(\frac{\pi}{2}, x, k\right)=d^{\frac{\pi}{2}}_2 x + d^{\frac{\pi}{2}}_4 x^2, \label{J1pi/2}\\
&&d^{\frac{\pi}{2}}_4 = -d^{\frac{\pi}{2}}_2= \sqrt{1-k^2} \, g_z(K, k)\, f_4(k), \label{d4pi/2}\\
&&g_z(p, k)=((k^2-2)p+2\E(p))\dn p-k^2 \sn p \cn p, \label{gz}\\
&&f_4(k)= E^2(k)-(1-k^2)K(k). \label{f4}
\end{eqnarray}
In the paper~\cite{max3} it was proved that $g_z(p,k)<0$ for any $p>0$, $k \in (0,1)$; therefore $g_z(K,k)<0$.
\begin{lem}\label{lem:l9}
The function $f_4(k) = E^2(k) + (k^2-1) K(k) > 0$ on the interval $k \in (0, 1)$.
\end{lem}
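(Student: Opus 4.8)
The plan is to apply Lemma~\ref{lem:l1}, which is the standard comparison-function technique already used repeatedly in this section. First I would exhibit a candidate comparison function $g(k)$ that is positive on $(0,1)$ and for which the quotient $f_4/g$ is monotone increasing and vanishes at $k\to 0$. A natural first guess, mirroring the proof of Lemma~\ref{lem:2.8} where $g(k) = 1-k^2$ worked for the closely related $f_3$, is again $g(k) = 1-k^2$; indeed $g(k) > 0$ on $(0,1)$ and $g(k)\to 1$, while I expect $f_4(k)\to 0$ as $k\to 0$ since $E(0)=K(0)=\pi/2$ gives $f_4(0) = (\pi/2)^2 - (\pi/2)^2 = 0$. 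So the two hypotheses that remain to be checked are $f_4\not\equiv 0$ and $(f_4/g)' \ge 0$.

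The first of these is immediate from the series expansion: using $E(k) = \frac{\pi}{2}\big(1 - \frac14 k^2 + \cdots\big)$ and $K(k) = \frac{\pi}{2}\big(1 + \frac14 k^2 + \cdots\big)$, one finds $f_4(k) = \frac{\pi^2}{4}k^2 + o(k^2)$ (the same leading term as for $f_3$), so $f_4\not\equiv 0$ and also $f_4(k)/g(k) = \frac{\pi^2}{4}k^2 + o(k^2) \to 0$, confirming the limit condition~\eq{cond2}. For the monotonicity, I would compute $(f_4/g)'$ using the classical derivative formulas $\frac{dE}{dk} = \frac{E-K}{k}$ and $\frac{dK}{dk} = \frac{E-(1-k^2)K}{k(1-k^2)}$. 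After simplification I expect the numerator to collapse to a perfect square times something manifestly nonnegative — in analogy with Lemma~\ref{lem:2.8}, where $(f_3/g)' = \frac{2kE^2}{(k^2-1)^2}$, I anticipate an identity of the shape $(f_4/g)' = \frac{2kE^2(k)}{(1-k^2)^2}$ or a similarly clean nonnegative expression, so that~\eq{cond1} holds on $(0,1)$.

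Once $g(k) = 1-k^2$ is verified to be a comparison function for $f_4(k)$ in the sense of Lemma~\ref{lem:l1}, the conclusion $f_4(k) > 0$ for $k\in(0,1)$ follows immediately. The only real obstacle I foresee is the algebraic simplification of $(f_4/g)'$: it requires careful use of the elliptic-integral derivative formulas and enough bookkeeping to recognize the resulting expression as nonnegative. If the naive choice $g = 1-k^2$ should fail to make the quotient monotone, the fallback is to try $g(k) = (1-k^2)^2$ or $g(k) = E(k)$, but given the structural parallel with $f_3$ and Lemma~\ref{lem:2.8} I expect $1-k^2$ to work directly.
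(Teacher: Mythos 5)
Your plan is exactly the paper's proof: take $g(k)=1-k^2$ as the comparison function and invoke Lemma~\ref{lem:l1}. Two points, though. First, your expansion is wrong: the $k^2$ coefficient of $f_4$ cancels identically (note that your own evaluation $f_4(0)=(\pi/2)^2-(\pi/2)^2$ presupposes reading the second term as $(k^2-1)K^2(k)$, which is indeed the version consistent with the rest of the paper), and the correct leading behaviour is $f_4(k)=\frac{\pi^2}{32}k^4+o(k^4)$, not $\frac{\pi^2}{4}k^2$ --- the structural parallel with $f_3$ fails at this order. This slip happens to be harmless, since Lemma~\ref{lem:l1} only needs $f_4\not\equiv 0$ and $f_4/g\to 0$, both of which survive. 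Second, the entire content of the lemma is the monotonicity identity, and you leave it as an expectation. It does come out clean, but not in the form you guessed: differentiating $f_4/g=\frac{E^2(k)}{1-k^2}-K^2(k)$ with $E'=(E-K)/k$ and $K'=\big(E-(1-k^2)K\big)/\big(k(1-k^2)\big)$ collapses to
$\left(\frac{f_4}{g}\right)'=\frac{2\left(E(k)+(k^2-1)K(k)\right)^2}{k\,(k^2-1)^2}\geq 0$,
a perfect square over a positive denominator for $k\in(0,1)$. Your hedge (``a similarly clean nonnegative expression'') covers this, but until that identity is actually derived the argument is an outline with its one nontrivial step missing; with it supplied, the proof coincides with the paper's.
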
 

\begin{proof}
We show that the function $g(k) = 1-k^2$ is a comparison function for $f_4(k)$ on the interval $k \in (0, 1)$.
The inequality $f_4(k) \not \equiv 0$ follows from the expansion $f_4(k) = \frac{\pi^2}{32} k^4 + o(k^4)$.
Note that $g(k)>0$ for $k \in (0,1)$.
Finally we have the equalities
$$\ds\left(\frac{f_4(k)}{g(k)}\right)' = \frac{2 (E(k)+(k^2-1)K(k))^2}{k (k^2-1)^2}$$ and $\ds\frac{f_4(k)}{g(k)}=\frac{\pi^2}{32} k^4 + o(k^4)$.
So $g(k)$ is a comparison function for $f_4(k)$ thus it follows from Lemma~\ref{lem:l1} that $f_4(k)>0$ for $k \in (0, 1)$.
\end{proof}
\begin{lem} \label{lem:u1pi/2C2}
\begin{itemize}
\item[$1)$] 
If $k \in (0,1)$, $u_1 = \frac{\pi}{2}$, $x \in (0,1)$, then $J_1 > 0$.
\item[$2)$] 
If $k \in (0,1)$, $u_1 = \frac{\pi}{2}$, $x \in \left\{0,1\right\}$, then $J_1 = 0$.
\end{itemize}
\end{lem}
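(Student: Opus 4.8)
The plan is to read the sign of $J_1$ directly off the factorizations \eq{J1pi/2}--\eq{f4} that have already been established, so that almost all of the actual work is carried by Lemma~\ref{lem:l9} and by the inequality $g_z(p,k)<0$ borrowed from~\cite{max3}. First I would use the relation $d^{\frac{\pi}{2}}_4=-d^{\frac{\pi}{2}}_2$ from \eq{d4pi/2} to collapse \eq{J1pi/2} into a single monomial times a quadratic in $x$:
$$J_1\left(\frac{\pi}{2}, x, k\right)=d^{\frac{\pi}{2}}_4\,(x^2-x)=d^{\frac{\pi}{2}}_4\, x\,(x-1).$$

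Next I would fix the sign of the coefficient $d^{\frac{\pi}{2}}_4=\sqrt{1-k^2}\,g_z(K,k)\,f_4(k)$ for $k\in(0,1)$: the factor $\sqrt{1-k^2}$ is positive; the factor $g_z(K,k)$ is negative, being the value at $p=K(k)$ of the function $g_z(p,k)$, which is negative for all $p>0$ and $k\in(0,1)$ by~\cite{max3} (the substitution $p=K(k)$ being legitimate since $K(k)>0$ on $(0,1)$); and the factor $f_4(k)$ is positive by Lemma~\ref{lem:l9}. Hence $d^{\frac{\pi}{2}}_4<0$ for every $k\in(0,1)$.

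Finally I would split on $x$. If $x\in(0,1)$ then $x(x-1)<0$, so $J_1=d^{\frac{\pi}{2}}_4\,x\,(x-1)$ is a product of two negative quantities and therefore $J_1>0$, which gives item~1); if $x\in\{0,1\}$ then $x(x-1)=0$, hence $J_1=0$, which gives item~2). There is no genuine obstacle in this lemma — it is a one-line sign count once \eq{d4pi/2}, the inequality of~\cite{max3}, and Lemma~\ref{lem:l9} are in hand; the only point worth flagging is the validity of specializing the global inequality $g_z(p,k)<0$ at $p=K(k)$, which is immediate from $K(k)>0$.
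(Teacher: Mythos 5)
Your proof is correct and follows exactly the route the paper takes: its own proof of this lemma is the one-line remark that the claim ``follows from formula \eq{J1pi/2}--\eq{f4}, inequality $g_z(K,k)<0$, and Lemma~\ref{lem:l9}.'' You have simply written out the sign count $J_1=d^{\frac{\pi}{2}}_4\,x(x-1)$ with $d^{\frac{\pi}{2}}_4<0$ explicitly, which is precisely what the paper leaves implicit.
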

\begin{proof}
It follows from formula~\eq{J1pi/2}--\eq{f4}, inequality $g_z(K,k)<0$, and Lemma~\ref{lem:l9}.
\end{proof}

\subsection{Global bounds of conjugate time}
\begin{thm} \label{th:tconjmaxC2}
If $\lambda \in C_2$, then $\tconj(\lambda)\geq \tmax (\lambda)$.
\end{thm}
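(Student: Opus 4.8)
The plan is to repeat, step by step, the argument of the proof of Theorem~\ref{th:tconjmaxC1}, using the two preparatory facts already available for $C_2$: the corollary to Proposition~\ref{prop:k=0uC2} (for small~$k$ there are no conjugate points on $(0,\tmax(\lambda))$) and Lemma~\ref{lem:u1pi/2C2} (the sign of $J_1$ at $t=\tmax$). By Subsection~\ref{subsec:dilations} and the reflection~\eq{refl} it is enough to take $\alpha=1$, so that $\tmax(\lambda)=2K(k)k$, corresponding to $u_1=\frac{\pi}{2}$; along $\Exp(\lambda,\cdot)$ the value of $x=\sin^2 u_2$ at the instant $t=\tmax$ is $\sn^2\big(\varphi/k+K(k),k\big)\in[0,1]$. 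I would split the proof according to whether this value lies in $(0,1)$ or in $\{0,1\}$; the second alternative occurs only for a discrete set of~$\varphi$.

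\emph{Case $x\in(0,1)$.} By Lemma~\ref{lem:u1pi/2C2} we have $J_1(\frac{\pi}{2},x,k)>0$ for every $k\in(0,1)$, so for any extremal with $u_1=\frac{\pi}{2}$ and this value of~$x$ the instant $t=\tmax$ is not a conjugate time. If $k<\bar k$ (with $\bar k$ from the corollary to Proposition~\ref{prop:k=0uC2}) the claim is immediate. Otherwise I would homotope the modulus downward: let $\lambda^s=(\varphi^s,s,1)\in C_2$ for $s\in[\bar s,k]$ with $\bar s\in(0,\bar k)$, take $t_1^s=2K(s)s=\tmax(\lambda^s)$, and choose $\varphi^s$ depending continuously on~$s$ so that the value of $x$ at $t=t_1^s$ stays equal to~$x$ along the family (this is possible because it amounts to fixing $\am(\tau,s)$ at $t=t_1^s$, a continuous condition on $\varphi^s$), with $\lambda^k=\lambda$; then $u_1^s=\am(K(s),s)=\frac{\pi}{2}$ and $x^s\equiv x\in(0,1)$. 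The trajectory $q^{\bar s}$ has no conjugate points on $(0,\tmax(\lambda^{\bar s}))$ by the corollary and none at $t=\tmax(\lambda^{\bar s})$ by Lemma~\ref{lem:u1pi/2C2}, while $t_1^s$ is not a conjugate time for any~$s$; hence Theorem~\ref{th:conj_hom} gives that $\Exp(\lambda,\cdot)$ has no conjugate point on $(0,\tmax(\lambda)]$, i.e.\ $\tconj(\lambda)>\tmax(\lambda)$.

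\emph{Case $x\in\{0,1\}$.} Here $J_1(\frac{\pi}{2},x,k)=0$ by Lemma~\ref{lem:u1pi/2C2}, so $t=\tmax$ is itself a conjugate time and it remains only to exclude earlier ones. Fix $t_1\in(0,\tmax(\lambda))$; since conjugate times are isolated, choose $t_2\in(t_1,\tmax(\lambda))$ that is not a conjugate time along $\Exp(\lambda,\cdot)$. For the perturbed parameters $\lambda^s=(\varphi+s,k,1)$ the value $x(s)$ of $x$ at $t=\tmax(\lambda^s)=2K(k)k$ equals $\sn^2\big((\varphi+s)/k+K(k),k\big)$, and $s=0$ is an isolated zero of the analytic function $s\mapsto x(s)\big(1-x(s)\big)$, so $x(s)\in(0,1)$ for all small $s\neq0$. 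By the previous case $\tconj(\lambda^s)>\tmax(\lambda^s)=2K(k)k>t_2$ for $s\in(0,\eps]$, so $\Exp(\lambda^s,\cdot)$ has no conjugate point on $(0,t_2]$; shrinking $\eps$ so that $t_2$ remains non-conjugate along $\Exp(\lambda^s,\cdot)$ for all $s\in[0,\eps]$ (continuity of the Jacobian in~$s$), Theorem~\ref{th:conj_hom} applied to this family (traversed from $\lambda^\eps$ to $\lambda$) with $t_1^s\equiv t_2$ shows that $\Exp(\lambda,\cdot)$ has no conjugate point on $(0,t_2]$; hence $t_1$ is not a conjugate time. As $t_1\in(0,\tmax(\lambda))$ was arbitrary, $\tconj(\lambda)\geq\tmax(\lambda)$; in fact $\tconj(\lambda)=\tmax(\lambda)$, since $J=R\cdot J_1$ vanishes at $t=\tmax$.

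The elliptic-function manipulations (evaluating $x$ at $t=\tmax$, the choice of $\varphi^s$, continuity and $2K$-periodicity of the family) are routine. As in Section~\ref{sec:C1}, the only delicate point is the degenerate case $x\in\{0,1\}$: one has to pick a perturbation of~$\varphi$ that simultaneously moves $x$ off the endpoints $\{0,1\}$, so that the first case applies to $\lambda^s$, and keeps the chosen instant $t_2$ non-conjugate along the entire family, so that Theorem~\ref{th:conj_hom} genuinely applies — precisely the bookkeeping carried out in items~1.2) and~3.2) of the proof of Theorem~\ref{th:tconjmaxC1}.
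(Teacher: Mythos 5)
Your proposal is correct and follows essentially the same route as the paper: the paper's proof also reduces to $\alpha=1$, applies Theorem~\ref{th:conj_hom} to the family $k^s=s$, $t_1^s=2K(s)s$ with $\varphi^s=sF\bigl(\am\bigl(\tfrac{\varphi+t_1^k/2}{k},k\bigr),s\bigr)-t_1^s/2$ (which is exactly your prescription of keeping $x^s$ constant and $u_1^s=\tfrac{\pi}{2}$), and invokes Lemma~\ref{lem:u1pi/2C2} at the endpoint together with the small-$k$ corollary as the base case. The degenerate case $x\in\{0,1\}$ is likewise handled by the perturbation argument of item~1.2) of Theorem~\ref{th:tconjmaxC1}, just as you describe.
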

\begin{proof}
This theorem is proved in exactly the same way as Theorem~\ref{th:tconjmaxC1} based on homotopy invariance of index of the second variation (the number of conjugate points), see Theorem~\ref{th:conj_hom}. The last theorem is applied to the family of extremal trajectries
\begin{eqnarray*}
	 && q^s(t) = \Exp(\lambda^s, t), \quad t \in [0, t^s_1], \quad s \in [\tilde{k}, k], \\
	 && \lambda ^s = \left(\varphi^s, k^s, \alpha =1 \right) \in  C_2,\\
	 &&\varphi^s =sF\left(\am \left(\frac{\varphi + t^k_1/2}{k}, k\right), s\right)-t^s_1/2,\\
	 && k^s= s, \quad t^s_1 = 2K(s)s, \quad \tilde{k} \in (0, \bar{k}).	 
	 \end{eqnarray*}
\end{proof}

\begin{remark}
It follows from Lemma~$\ref{lem:u1pi/2C2}$ that the lower bound from Theorem~$\ref{th:tconjmaxC2}$ is attained: if $\varphi = Kkn$, $n \in \Z$, then $\tconj(\lambda)= 2Kk$, $\lambda = (\varphi, k, \alpha=1) \in C_2$.
\end{remark}

\begin{remark}
Using the homotopy invariance of the index of the second variation, we can prove the upper bound of conjugate time:
\begin{eqnarray*}
&&\tconj(\lambda)\leq \tmaxd(\lambda), \quad \lambda \in C_2,\\
&&\tmaxd(\lambda) = 4kK, \quad \lambda \in C_2.
\end{eqnarray*}
Note that the segment $[\tmax(\lambda), \tmaxd (\lambda)]$ contains exactly two conjugate times (with account of multiplicity).
\end{remark}

\section{Estimate of conjugate time for $\lambda \in C_3$}
\label{sec:C3}
\begin{thm} \label{th:tconjmaxC3}
If $\lambda \in C_3$, then the extremal trajectory $\Exp (\lambda, t)$, $t \in (0, +\infty)$, does not contain conjugate points.
\end{thm}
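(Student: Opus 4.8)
The plan is to prove that the Jacobian of the exponential mapping has no positive zero along any extremal with $\lambda\in C_3$; since conjugate times are precisely the positive roots of this Jacobian (see Section~\ref{sec:conj_hom}), this is exactly the assertion. By the reflection symmetry~\eqref{refl} and the dilation argument of Subsection~\ref{subsec:dilations} it is enough to treat the normalized case $\alpha=1$ (on $C_3$ one has $\alpha\neq0$, so the case $\alpha=0$ does not occur). For $\alpha=1$ the extremal moves along the separatrix $E=1$, $c\neq0$, of the pendulum~\eqref{pend}: the modulus degenerates, $k=1$, the parameterization of $\Exp$ from~\cite{engel} becomes elementary ($\sn\to\tanh$, $\cn,\dn\to1/\cosh$, and the incomplete integrals $\E$, $F$ pass to elementary antiderivatives), and $u_1=\am(p,1)\in(0,\pi/2)$ for every $t>0$.

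Substituting this parameterization into the transformed $4\times4$ determinant of Subsection~\ref{subsec:J_transform} (admissible because $\alpha=1$) and repeating the computational scheme used in the domains $C_1$ and $C_2$, I expect to obtain a factorization $J=R\cdot J_1$ with $R$ an explicit expression that is manifestly nonzero for $t>0$ and
$$J_1=d_0(u_1)+d_2(u_1)\,x+d_4(u_1)\,x^2,\qquad x=\sin^2u_2\in[0,1),$$
whose coefficients $d_0,d_2,d_4$ are elementary functions of $u_1$ alone (on $C_3$ the phase parameter enters $J_1$ only through $x$). It then remains to show that $J_1$ is sign-definite on $(0,\pi/2)\times[0,1)$. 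Exactly as in Lemma~\ref{lem:J10<0C2} I would view $J_1$ as a quadratic in $x$ and use the convexity argument of Lemma~\ref{lem:l4}: it suffices to determine the sign of the leading coefficient $d_4(u_1)$ and of the endpoint values $J_1(u_1,0)=d_0(u_1)$ and $J_1(u_1,1)=d_0(u_1)+d_2(u_1)+d_4(u_1)$ on $(0,\pi/2)$.

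Each of these one-variable inequalities I would prove in the style of Lemmas~\ref{lem:l2}, \ref{lem:l3} and \ref{lem:l5}--\ref{lem:2.7}: for the function $f$ in question (one of $d_4$, $d_0$, $d_0+d_2+d_4$) exhibit a positive comparison function $g$ such that $(f/g)'\geq0$ — in the analogous lemmas the numerator of $(f/g)'$ is a perfect square — and such that $f/g\to0$ as $u_1\to0$, then invoke Lemma~\ref{lem:l1}; the leading term of the Maclaurin expansion of $f$ certifies both $f\not\equiv0$ and the boundary condition~\eqref{cond2}. Once $J_1\neq0$ is established for all $t>0$, no homotopy argument is required: the trajectory contains no conjugate point on $(0,+\infty)$.

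The step I expect to be the main obstacle is the construction of the comparison functions $g$ for $d_0$ and for $d_0+d_2+d_4$: finding a positive $g$ that makes $(f/g)'$ nonnegative is the non-routine part of the whole scheme, and it usually relies on recognizing the numerator of $(f/g)'$ as a square. A subsidiary difficulty is carrying out the degeneration $k\to1$ carefully — in the factorization of Subsection~\ref{subsec:J_transform} the naive $C_1$/$C_2$ prefactor $R$ becomes singular at $k=1$, so one must keep track of the compensating vanishing of $J_1$ and identify the true order of vanishing of $J_1$ at $t=0$, since that leading coefficient controls all the sign determinations near the origin.
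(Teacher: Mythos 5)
Your conclusion is the right one, but your argument as written has a genuine gap: every decisive step is announced rather than carried out. You do not actually derive the factorization $J=R\cdot J_1$ at $k=1$ (and, as you yourself observe, the $C_1$/$C_2$ prefactor $R$ carries $1-k^2$ in the denominator, so the separatrix case must be recomputed from the degenerate hyperbolic parameterization rather than obtained by substitution). More importantly, the three one-variable sign determinations on which your convexity argument rests --- the signs of $d_4(u_1)$, of $d_0(u_1)$, and of $d_0(u_1)+d_2(u_1)+d_4(u_1)$ on $(0,\pi/2)$ for $k=1$ --- are left unproved: no comparison functions are exhibited, and you explicitly flag their construction as ``the main obstacle,'' which is to say the proof is missing precisely where it matters. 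In the model lemmas you invoke (Lemmas~\ref{lem:l2}, \ref{lem:l3}, \ref{lem:l5}--\ref{lem:2.7}) the method succeeds only because the numerator of $(f/g)'$ happens to be a perfect square for a well-chosen $g$; nothing in your proposal guarantees that such a $g$ exists for the $k=1$ coefficients, so the scheme cannot be accepted on faith.

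The paper closes this case by an entirely different and essentially computation-free route: homotopy from $C_2$. Since $\tmax(\lam)=2kK(k)\to+\infty$ as $k\to1^-$, for any $t_2>0$ one can pick $k_1\in(0,1)$ with $\tfrac12 k_1K(k_1)>t_2$; Theorem~\ref{th:tconjmaxC2} guarantees that the $C_2$ trajectories with modulus $s\in[k_1,1)$ carry no conjugate points on $(0,t_2]$, and Theorem~\ref{th:conj_hom} transports this to the limiting separatrix trajectory $s=1$ on $(0,t_2]$. As $t_2$ is arbitrary, there are no conjugate points on $(0,+\infty)$. This sidesteps both the degeneration $k\to1$ and all the sign analysis your plan requires. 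If you want a self-contained direct proof along your lines, you must actually produce the $k=1$ coefficients and the comparison functions; otherwise the homotopy argument is the way to close the gap.
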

\begin{proof}
Let $\lambda =(\varphi, k =1, \alpha=1) \in C_3$ and $t_1>0$.
We show that the trajectory $\Exp(\lambda, t)$ , $t \in (0, t_1]$, has no conjugate points.
Choose a time $t_2 >t_1$ that is not a conjugate time. There exists $k_1 \in (0,1)$, s.~t. $k_1K(k_1)> 2t_2$. According to Theorem~\ref{th:tconjmaxC2}, all trajectories
\begin{eqnarray*}
	 && q^s(t) = \Exp(\lambda^s, t), \quad t \in (0, t^s_1], \quad s \in [k_1, 1), \\
	 && \lambda ^s = \left(\varphi^s, k^s, \alpha =1 \right) \in  C_2,\\
	 && \varphi^s= \varphi, \quad k^s= s, \quad t^s_1 =\frac{1}{2}K(s)s, 	 
\end{eqnarray*}
have no conjugate points. Applying Theorem~\ref{th:conj_hom} to the family of trajectories $q^s(t)$, $t \in (0, t_2]$,  $s \in [k_1, 1]$, we conclude that the trajectory $q^1(t)= \Exp (\lambda, t)$, $t \in (0, t_2]$, does not contain conjugate points.
\end{proof}

\section{Estimate of conjugate time for $\lambda \in \cup^{7}_{i=4}C_i$}
\label{sec:C47}
If $\lambda \in \cup^{7}_{i=4}C_i$, then conjugate time (and cut time) can be located by projecting the original problem~\eq{pr1}--\eq{pr3} into simpler problems of a lower dimension using the following proposition. 

\begin{proposition}
\label{prop:project}
Let us consider two optimal control problems:
\begin{eqnarray*}
&& \dot{q}^i = f^i(q^i, u),\quad q^i \in M^i, \quad u \in U, \\
&&q^i(0)= q^i_0, \quad q^i(t_1)= q^i_1,\\
&&J= \int^{t_1}_0 \varphi(u) \,dt \rightarrow \min,\\
&& i=1,2.
\end{eqnarray*}
Suppose that there exists a smooth map $G:M^1 \rightarrow M^2$, s.~t. if $q^1(t)$ is the trajectory of the first system corresponding to a control $u(t)$, then $q^2(t)= G(q^1(t))$ is the trajectory of the second system with the same control.

Further assume that $q^1(t)$ and $q^2(t)$ are such trajectories. If $q^2(t)$ is locally (globally) optimal for the second problem, then $q^1(t)$ is locally (globally) optimal for the first problem.
\end{proposition}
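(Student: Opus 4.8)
The plan is to argue by contradiction, transporting a hypothetical competitor for the first problem down to $M^2$ through the map $G$. Suppose the trajectory $q^1(\cdot)$, defined on $[0,t_1]$ and corresponding to a control $u(\cdot)$, is \emph{not} optimal for the first problem. Then there is an admissible trajectory $\widetilde q^1(\cdot)$ on $[0,t_1]$, corresponding to a control $\widetilde u(\cdot)$, which satisfies the same boundary conditions $\widetilde q^1(0)=q^1(0)$, $\widetilde q^1(t_1)=q^1(t_1)$ and has strictly smaller cost, $\int_0^{t_1}\varphi(\widetilde u)\,dt<\int_0^{t_1}\varphi(u)\,dt$; in the case of local optimality we may moreover take $\widetilde q^1$ inside any prescribed neighbourhood of $q^1$ in the relevant topology (uniform closeness of trajectories for strong optimality, together with $L_\infty$-closeness of controls for weak optimality).

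Set $\widetilde q^2(\cdot):=G(\widetilde q^1(\cdot))$. By the defining property of $G$, this curve is a trajectory of the second system corresponding to the \emph{same} control $\widetilde u(\cdot)$, hence its cost is again $\int_0^{t_1}\varphi(\widetilde u)\,dt$, which is strictly less than $\int_0^{t_1}\varphi(u)\,dt$, the cost of $q^2(\cdot)=G(q^1(\cdot))$. Moreover $\widetilde q^2(0)=G(\widetilde q^1(0))=G(q^1(0))=q^2(0)$ and $\widetilde q^2(t_1)=G(\widetilde q^1(t_1))=G(q^1(t_1))=q^2(t_1)$, so $\widetilde q^2$ has the same endpoints as $q^2$. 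Thus $\widetilde q^2$ is an admissible competitor for $q^2$ in the second problem with strictly smaller cost, contradicting global optimality of $q^2$; this settles the global case.

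For the local case it remains to check that $\widetilde q^2$ falls inside the neighbourhood of $q^2$ on which $q^2$ is known to be minimal, provided $\widetilde q^1$ was chosen close enough to $q^1$. The set $q^1([0,t_1])$ is compact, so $G$ is uniformly continuous on a compact neighbourhood of it; hence composition with $G$ is continuous for the uniform ($C^0$) topology on trajectories, while the controls are literally unchanged, so $L_\infty$-closeness of controls is preserved trivially. Given the neighbourhood of $q^2$ furnished by local optimality, one therefore pulls it back to a neighbourhood of $q^1$ by continuity of composition with $G$ and applies the first part inside that neighbourhood; this disposes of strong and weak local optimality at once. The only point that requires care is precisely this last bookkeeping --- matching the topology defining ``local optimality'' on $M^1$ with the one on $M^2$ and checking that composition with $G$ is continuous for it --- the rest being a one-line transport of competing trajectories through $G$.
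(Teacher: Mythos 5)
Your argument is correct and is essentially the paper's own proof: assume $q^1$ is not optimal, push a cheaper admissible competitor $\widetilde q^1$ forward through $G$ to get a competitor $\widetilde q^2 = G(\widetilde q^1)$ for $q^2$ with the same control, the same cost, and the same endpoints, contradicting optimality of $q^2$. Your extra bookkeeping for the local case (continuity of composition with $G$ in the $C^0$ topology on trajectories, with controls literally unchanged) is in fact more careful than the paper, which states only the global transport argument and leaves the local case implicit.
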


\begin{proof}
Assume the converse. Suppose $q^2(t)$ is optimal and $q^1(t)$ is not optimal. Then for the first problem there exists a trajectory $\tilde{q}^1(t)$, s.~t. value of the functional $J$ for this trajectory is less than for $q^1(t)$. So value of $J$ is less on the trajectory $\tilde{q}^2(t)= G (\tilde{q}^1(t))$ than on $q^2(t)$. This contradiction proves the proposition.
\end{proof}

In the case $\lambda \in C_4 \cup C_5 \cup C_7$, the sub-Riemannian problem on the Engel group is projected on the Riemannian problem in the Euclidean plane $\R^2_{x,y}$: 
\begin{eqnarray*}
&&G: \R^4_{x, y, z, v}\rightarrow \R^2_{x,y}, \qquad (x, y, z, v) \mapsto(x, y),\\
&&\dot{x}= u_1, \quad \dot{y}=u_2, \quad (x,y)(0)= (x_0, y_0), \quad (x,y)(t_1)=(x_1, y_1),\\
&&l= \int^{t_1}_0 \sqrt{u^2_1 + u^2_2} \, dt \rightarrow \min.
\end{eqnarray*}
If $\lambda \in C_4 \cup C_5 \cup C_7$, $\Exp (\lambda, t)= (x_t, y_t, z_t, v_t)$, then $(x_t, y_t)$ is a straight line that is globally optimal in the Riemannian problem on $\R^2_{x, y}$ for $t \in [0, +\infty)$.
Therefore $\tcut(\lambda) = \tconj(\lambda)= +\infty = \tmax (\lambda)$ for $\lambda \in C_4\cup C_5 \cup C_7$.

If $\lambda \in C_6$, then the sub-Riemannian problem on the Engel group is projected on the sub-Riemannian problem on the Heisenberg group $\R^3_{x,y,z}$:
\begin{eqnarray*}
&&G:\R^4_{x, y, z, v} \rightarrow \R^3_{x, y, z}, \qquad (x, y, z, v)\mapsto (x, y, z),\\
&&\dot{x}=u_1,\quad \dot{y}=u_2,\quad \dot{z}=-\frac{y}{2}u_1+\frac{x}{2}u_2,\\
&&(x, y, z)(0)=(x_0, y_0, z_0), \quad (x,y, z)(t_1)=(x_1, y_1, z_1),\\
&&l= \int^{t_1}_0 \sqrt{u^2_1 + u^2_2} \,dt \rightarrow \min.
\end{eqnarray*}
For $\lambda = (\theta, c, \a = 0) \in C_6$, $\Exp(\lambda, t)= (x_t, y_t, z_t, v_t)$, the curve $(x_t, y_t, z_t)$ is globally and locally optimal for $t \in [0, \frac{2\pi}{|c|}]$, i.~e., up to the first turn of the circle $\ds(x_t, y_t)=\left(\frac{\cos(ct + \theta)-\cos \theta}{c}, \frac{\sin(ct)- \sin \theta}{c}\right)$. It follows from Proposition~\ref{prop:project} that $\tconj(\lambda)\geq \tcut (\lambda)\geq \frac{2\pi}{|c|}= \tmax(\lambda)$ for $\lambda \in C_6$. By Theorem~\ref{th:tcut_bound}, we have $\tconj(\lambda)\geq \tcut (\lambda)\leq \frac{2\pi}{|c|}= \tmax(\lambda)$ for $\lambda \in C_6$.

\begin{remark}
Passing to the limit $\alpha \rightarrow 0$, $k \rightarrow 0$, it can be shown that for $\lambda =(\theta, c, \alpha) \in C_6$, $\theta= \alpha = 0$,  equality $\tconj (\lambda) = \frac{2\pi}{|c|}= \tmax(\lambda)$ holds. But for $\lambda \in C_6$ this equality does not hold in the general case.
\end{remark}

Finally we summarize the results of this section in the following statement.
\begin{thm}\label{th:tconjmaxC47}
If $\lambda \in C_4 \cup C_5\cup C_7$, then $\tconj(\lambda)=\tcut(\lambda)= +\infty= \tmax (\lambda)$.
If $\lambda \in C_6 $, then $\tconj(\lambda)\geq \tcut(\lambda)=  \tmax (\lambda)$.
\end{thm}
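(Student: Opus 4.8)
The plan is to prove this theorem as a synthesis of the projection technique of Proposition~\ref{prop:project}, handling the two groups of strata $C_4\cup C_5\cup C_7$ and $C_6$ in turn. In each case I would exhibit an explicit reduction map $G$ of the Engel problem onto a classical lower-dimensional model whose geodesics are fully understood, check that $G$ intertwines the two control systems for the same control, and then read off the optimality conclusion together with $\tmax$.

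For $\lambda\in C_4\cup C_5\cup C_7$ I would use $G:\R^4_{x,y,z,v}\to\R^2_{x,y}$, $(x,y,z,v)\mapsto(x,y)$, which by~\eq{pr1} sends an Engel trajectory with control $u(\cdot)$ to the trajectory of the flat Riemannian problem $\dot x=u_1$, $\dot y=u_2$ with the same control. On these three strata the pendulum~\eq{pend} sits at an equilibrium (stable on $C_4$, unstable on $C_5$) or is the motionless solution on $C_7$, so $\theta$ is constant and the projected curve $(x_t,y_t)$ is a straight line; a straight line is globally optimal in $\R^2_{x,y}$ for all $t\ge 0$. By Proposition~\ref{prop:project} the Engel trajectory is then globally optimal on every $[0,t_1]$, so $\tcut(\lambda)=+\infty$; since global optimality implies local optimality, the trajectory has no conjugate points either, so $\tconj(\lambda)=+\infty$, which agrees with $\tmax(\lambda)=+\infty$ by the definition of $\tmax$ on $C_4\cup C_5\cup C_7$.

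For $\lambda=(\theta,c,\alpha=0)\in C_6$ I would use $G:\R^4_{x,y,z,v}\to\R^3_{x,y,z}$, $(x,y,z,v)\mapsto(x,y,z)$, sending Engel trajectories to geodesics of the sub-Riemannian Heisenberg problem $\dot x=u_1$, $\dot y=u_2$, $\dot z=-\frac{y}{2}u_1+\frac{x}{2}u_2$ with the same control. For $\alpha=0$ the Heisenberg geodesic has planar shadow the circle $(x_t,y_t)=\big(\frac{\cos(ct+\theta)-\cos\theta}{c},\frac{\sin(ct+\theta)-\sin\theta}{c}\big)$ of perimeter $2\pi/|c|$, and such a geodesic is globally (hence locally) optimal exactly up to the first closure of the circle, i.e. for $t\in[0,2\pi/|c|]$. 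Proposition~\ref{prop:project} then gives $\tcut(\lambda)\ge 2\pi/|c|=\tmax(\lambda)$, Theorem~\ref{th:tcut_bound} gives the reverse inequality $\tcut(\lambda)\le\tmax(\lambda)$, so $\tcut(\lambda)=\tmax(\lambda)$; and $\tconj(\lambda)\ge\tcut(\lambda)$ because a globally optimal arc contains no interior conjugate point, yielding $\tconj(\lambda)\ge\tmax(\lambda)$.

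The only substantive input beyond bookkeeping is the optimality statement for Heisenberg geodesics; I would either cite it from the literature or derive it exactly as above, by pushing the reduction one further step down to the planar problem and using that $(x_t,y_t)$ is a simple circle up to time $2\pi/|c|$. The one point requiring care — but not a real obstacle — is verifying that $G$ genuinely maps trajectories of the Engel system to trajectories of the reduced system \emph{for the same} $u(\cdot)$; this is immediate here, since the first two components of~\eq{pr1}, and in the $C_6$ case also the $z$-component, coincide verbatim with the dynamics of the target systems, so the check is a one-line computation.
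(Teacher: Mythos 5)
Your argument coincides with the paper's own proof: for $\lambda \in C_4 \cup C_5 \cup C_7$ you project via Proposition~\ref{prop:project} onto the flat Riemannian problem in $\R^2_{x,y}$, where the straight-line shadow is optimal for all time, and for $\lambda \in C_6$ you project onto the Heisenberg group, use optimality of its geodesics up to the first closure of the circle at $t = 2\pi/|c|$ to get $\tconj(\lambda) \geq \tcut(\lambda) \geq \tmax(\lambda)$, and close with Theorem~\ref{th:tcut_bound} for the reverse inequality $\tcut(\lambda) \leq \tmax(\lambda)$. This is essentially the same reduction and bookkeeping as in the paper, so the proposal is correct.
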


\section{Conclusion}
Theorem~\ref{th:tconjmax} follows from Theorems~\ref{th:tconjmaxC1}, \ref{th:tconjmaxC2}, \ref{th:tconjmaxC3}, \ref{th:tconjmaxC47}.

Using the estimate of cut time obtained by the work~\cite{engel} (Theorem~\ref{th:tcut_bound}) and the estimate of conjugate time proved in this work (Theorem~\ref{th:tconjmax}), we can get the description of global structure of the exponential map in sub-Riemannian problem on the Engel group. So we can reduce this problem to solving the system of algebraic equations. This will be the subject of another paper.

The method for estimating a conjugate time used in this paper was successfully applied earlier to Euler's elastic problem~\cite{el_conj} and sub-Riemannian problem on the group of rototranslations~\cite{cut_sre1}. There is no doubt that this method is also valid for nilpotent sub-Riemannian problem with the growth vector (2,3,5)~\cite{dido_exp, max1, max2, max3}.

The method can be used for other invariant sub-Riemannian problems on Lie groups of low-dimensional integrable in  non-elementary functions.

The first natural step in this direction is investigation of invariant sub-Riemannian problem on 3D Lie groups which are classified by A.A.Agrachev and D.Barilari~\cite{agrachev_barilari}.

\end{document}